\RequirePackage[l2tabu, orthodox]{nag}

\documentclass[12pt, reqno]{amsart} 
\usepackage{amsthm, amsmath, mathtools}
\usepackage{amssymb}
\usepackage{amscd}
\usepackage{enumerate}
\usepackage{mathrsfs} 
\usepackage[curve,matrix,arrow,frame,tips]{xy}
\usepackage{colonequals}
\usepackage{verbatim}
\usepackage{pdfsync}
\usepackage{graphicx}
\usepackage[usenames,dvipsnames]{color}

\newcommand{\defi}[1]{\textsf{#1}} 

\usepackage{url}

\usepackage{fullpage}

\usepackage[normalem]{ulem}

\numberwithin{equation}{section} 

\theoremstyle{plain}
\newtheorem{theorem}[equation]{Theorem} 
\newtheorem{proposition}[equation]{Proposition}
\newtheorem{lemma}[equation]{Lemma}

\newtheorem{corollary}[equation]{Corollary}

\newtheorem{question}[equation]{Question}

\theoremstyle{definition}
\newtheorem{definition}[equation]{Definition}
\newtheorem{example}[equation]{Example}

\theoremstyle{remark}
\newtheorem{remark}[equation]{Remark}

\newcommand{\kbar}{\overline{k}}

\newcommand{\F}{\mathbb{F}}

\newcommand{\iso}{\simeq}

\newcommand{\XX}{\mathscr{X}}
\newcommand{\OO}{\mathscr{O}}

\newcommand{\Z}{\mathbb{Z}}

\newcommand{\intersect}{\cap}
\newcommand{\isom}{\simeq}
\newcommand{\isomto}{\stackrel{\sim}\to}
\newcommand{\injects}{\hookrightarrow}
\newcommand{\surjects}{\twoheadrightarrow}
\newcommand{\union}{\cup}

\newcommand{\calA}{\mathcal{A}}

\newcommand{\calX}{\mathcal{X}}

\newcommand{\Jbar}{\overline{J}}
\newcommand{\Pbar}{\overline{P}}

\DeclareMathOperator{\Spec}{Spec}

\DeclareMathOperator{\SL}{SL}
\DeclareMathOperator{\GL}{GL}

\DeclareMathOperator{\Gal}{Gal}

\DeclareMathOperator{\Stab}{Stab}

\DeclareMathOperator{\alg}{alg}

\DeclareMathOperator{\Disp}{Disp}

\DeclareMathOperator{\Aut}{Aut}
\DeclareMathOperator{\PGL}{PGL}
\DeclareMathOperator{\PSL}{PSL}
\DeclareMathOperator{\Frac}{Frac}

\newcommand{\Aff}{\mathbb{A}}

\newcommand{\PP}{\mathbb{P}}

\renewcommand{\to}{\longrightarrow}
\newcommand{\mto}{\longmapsto}

\usepackage{microtype} 
\usepackage[pdfauthor={Frauke Bleher, Ted Chinburg, Bjorn Poonen, Peter Symonds}]{hyperref}

\begin{document}

\title{Automorphisms of Harbater--Katz--Gabber curves }

\author[F. Bleher]{Frauke M. Bleher*}\thanks{*Supported by NSA Grant \# H98230-11-1-0131 and NSF Grant \# DMS-1360621.}
\address{Frauke M. Bleher\\ Department of Mathematics\\University of Iowa\\Iowa City, IA 52242, U.S.A.}
\email{frauke-bleher@uiowa.edu}

\author[T. Chinburg]{Ted Chinburg**}\thanks{**Supported by NSF Grants \# DMS-1265290 and DMS-1360767, SaTC grant CNS-15136718 and a grant from the Simons Foundation (338379 to Ted Chinburg).}
\address{Ted Chinburg\\ Department of Mathematics\\University of Pennsylvania\\ Philadelphia, PA 19104, U.S.A.}
\email{ted@math.upenn.edu}

\author[B. Poonen]{Bjorn Poonen\dag}\thanks{\dag Supported by NSF Grants \# DMS-1069236 and DMS-1601946 and a grants from the Simons Foundation (340694 and 402472 to Bjorn Poonen).}
\address{Bjorn Poonen\\Department of Mathematics\\ Massachusetts Institute of Technology\\
Cambridge, MA 02139, U.S.A.}
\email{poonen@math.mit.edu}

\author[P. Symonds]{Peter Symonds} 
\address{Peter Symonds\\ School of Mathematics\\
University of Manchester\\Oxford Road,
Manchester M13 9PL
\\Manchester M13 9PL
United Kingdom}
\email{Peter.Symonds@manchester.ac.uk}

\subjclass[2010]{Primary 14H37; Secondary 14G17, 20F29}

\begin{abstract}
Let $k$ be a perfect field of characteristic $p > 0$, 
and let $G$ be a finite group.
We consider the pointed $G$-curves over $k$ associated 
by Harbater, Katz, and Gabber 
to faithful actions of $G$ on $k[[t]]$ over $k$.
We use such ``HKG $G$-curves''
to classify the automorphisms of $k[[t]]$ of $p$-power order
that can be expressed by particularly explicit formulas,
namely those mapping $t$ to a power series 
lying in a $\Z/p\Z$ Artin--Schreier extension of $k(t)$.
In addition, we give necessary and sufficient criteria 
to decide when an HKG $G$-curve with an action
of a larger finite group $J$ is also an HKG $J$-curve.
 \end{abstract}

\maketitle

\section{Introduction}
\label{s:intro}

Let $k$ be a field, let $k[[t]]$ be the power series ring,
and let $\Aut(k[[t]])$ be its automorphism group as a $k$-algebra.
When the characteristic of $k$ is positive,
$\Aut(k[[t]])$ contains many interesting finite subgroups.
One way to construct such subgroups is 
to start with an algebraic curve $X$ 
on which a finite group $G$ acts with a fixed point $x$ 
having residue field $k$; 
then $G$ acts on the completion $\hat{\mathcal{O}}_{X,x}$ 
of the local ring of $x$ at $X$, and $\hat{\mathcal{O}}_{X,x}$
is isomorphic to $k[[t]]$ for any choice of uniformizing parameter $t$ at $x$.
In fact, results of Harbater~\cite[\S 2]{harbater} 
and of Katz and Gabber~\cite[Main Theorem~1.4.1]{katz} show that 
\emph{every} finite subgroup $G$ of $\Aut(k[[t]])$ arises in this way.
Their results connect the \'etale fundamental group of $\Spec(k((t)))$
to that of $\PP^1_k - \{0,\infty\}$.
See Section~\ref{s:HKGtheorem} for further discussion. 
The value of this technique is that one can study local questions about
elements of $\Aut(k[[t]])$ using global tools such
as the Hurwitz formula for covers of curves over $k$.  

In this paper we use the above method to study two closely related problems
when $k$ is a perfect field of characteristic $p > 0$, 
which we assume for the rest of this paper.
The first problem, 
described in Section~\ref{s:finiteorder}, 
is to find explicit formulas for $p$-power-order elements 
$\sigma$ of $\Aut(k[[t]])$.
In particular, we study $\sigma$ that are ``almost rational''
in the sense of Definition~\ref{D:almost rational}.
Our main result in this direction, Theorem~\ref{T:almost rational},
classifies all such $\sigma$.

The second problem, described in Sections \ref{s:HKGdefs} and~\ref{s:extrauts},
is to study the full automorphism group
of the so-called Harbater--Katz--Gabber $G$-curves (HKG $G$-curves), 
which are certain curves $X$ with a $G$-action as above.
One reason for this study is that it turns out that 
almost rational automorphisms arise from HKG $G$-curves $X$ 
for which $\Aut(X)$ is strictly larger than~$G$.
In fact, 
our Theorems \ref{T:order p^n}\eqref{I:cyclic of order p^n for n at least 2} 
and~\ref{T:surjection for order 4} 
concerning such $X$ 
are needed for our proof of Theorem~\ref{T:almost rational} 
on almost rational automorphisms of $k[[t]]$.

For some other applications of HKG $G$-curves, e.g., to the problem of lifting
automorphisms of $k[[t]]$ to characteristic~$0$, see \cite{chingurhar} and
its references.

\subsection{Finite-order automorphisms of $k[[t]]$}
\label{s:finiteorder}

Every order~$p$ element of $\Aut(k[[t]])$
is conjugate to $t \mapsto t (1+c t^m)^{-1/m}$
for some $c \in k^\times$ and some positive integer $m$ prime to~$p$
(see~\cite[Proposition~1.2]{klopsch}, \cite[\S4]{lubin}, 
and Theorem~\ref{T:order p}).

The natural question arises whether there is an equally explicit description 
of automorphisms of order $p^n$ for $n>1$.
Each such automorphism is conjugate to 
$t \mapsto \sigma(t)$
for some $\sigma(t) \in k[[t]]$ that is algebraic over $k(t)$
(see Corollary~\ref{C:algebraic series}).
In this case, the field
$L \colonequals k(t,\sigma(t),\ldots,\sigma^{p^n-1}(t)) \subseteq k((t))$
is algebraic over $k(t)$.
When $n>1$, we cannot have $L = k(t)$,
because the group $\Aut_k(k(t)) \isom \PGL_2(k)$ has no element of order $p^2$.
The next simplest case from the point of view of explicit power series
is the following:

\begin{definition}
\label{D:almost rational}
Call $\sigma \in \Aut(k[[t]])$ \defi{almost rational} 
if the field $L \colonequals k(\{\sigma(t): \sigma \in G\})$
is a $\Z/p\Z$ Artin--Schreier extension of $k(t)$;
i.e., $L=k(t,\beta)$ where $\beta \in k((t))$ satisfies
$\wp(\beta) = \alpha$ for some $\alpha \in k(t)$;
here $\wp$ is the Artin--Schreier operator
defined by $\wp(x) \colonequals x^p-x$.
\end{definition}

By subtracting an element of $k[t^{-1}]$ from $\beta$,
we may assume that $\beta \in t k[[t]]$ 
and hence $\alpha \in k(t) \intersect t k[[t]]$.
Then we have an explicit formula for $\beta$, namely
\[
	\beta = - \sum_{i = 0}^\infty \alpha^{p^i},
\]
and $\sigma(t)$ is a rational function in $t$ and $\beta$.
This is the sense in which almost rational automorphisms
have explicit power series.

Prior to the present article, two of us found one explicit example
of an almost rational $\sigma$ of order $p^n > p$ (and its inverse);  see \cite{chinburgsymonds}.
Our first main theorem describes \emph{all} such $\sigma$ up to conjugacy.

\begin{theorem}
\label{T:almost rational}
Suppose that $\sigma$ is an almost rational automorphism
of $k[[t]]$ of order $p^n$ for some $n>1$.
Then $p=2$, $n=2$, and there exists $b \in k$ (unique modulo $\wp(k) = \{\wp(a):a \in k\}$) 
such that $\sigma$ is conjugate to 
the order~$4$ almost rational automorphism
\begin{equation}
\label{E:explicit sigma 1}
	\sigma_b(t) \colonequals \frac{b^2 t + (b+1)t^2 + \beta}{b^2+t^2},
\end{equation}
where $\beta$ 
is the unique solution to $\beta^2-\beta=t^3+(b^2+b+1)t^2$ in $t k[[t]]$.
\end{theorem}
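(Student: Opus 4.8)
The plan is to pass from the power-series automorphism $\sigma$ to a global curve and then exploit the extra symmetry forced by the almost rational hypothesis. Set $G \colonequals \langle\sigma\rangle \isom \Z/p^n\Z$ and let $X$ be the HKG $G$-curve attached to the action of $G$ on $k[[t]]$, so that the completed local ring at the unique wildly (totally) ramified point $\infty$ is $k[[t]]$ with $G$ acting through $\sigma$. Write $L \colonequals k(t,\beta)$ with $\wp(\beta) = \alpha \in k(t)$ as in Definition~\ref{D:almost rational}; then $L$ is a $G$-stable subfield of $k((t))$. I would first record the basic constraints. Since $\Aut_k(k(t)) \isom \PGL_2(k)$ has no element of order $p^2$, the subfield $k(t)$ cannot be $G$-stable, so $\sigma(t) \notin k(t)$ and hence $L = k(t,\sigma(t))$; moreover $G$ acts faithfully on $L$ (an element fixing $t$ is the identity), and $\infty$ is totally ramified in the degree-$p^n$ cover $C \to C/G$, where $C$ is the smooth projective model of $L$.

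The heart of the argument is to show that the almost rational hypothesis forces $\Aut(X)$ to be strictly larger than $G$. The extra symmetry comes from the Artin--Schreier Galois group $\Gal(L/k(t)) = \langle\tau\rangle \isom \Z/p\Z$, with $\tau(t) = t$ and $\tau(\beta) = \beta+1$. This $\tau$ is an automorphism of $C$ that does not lie in $G$: indeed it does not even fix $\infty$, since it permutes cyclically the $p$ points of $C$ above $t=0$ (the cover $C \to \PP^1 = C/\langle\tau\rangle$ is \'etale there because $\alpha \in tk[[t]]$). The key technical step is to identify $C$ with the HKG $G$-curve $X$, equivalently to check that $C \to C/G$ is branched over only the two points required of an HKG cover and that $C/G \isom \PP^1$; the genus of $C$ is then pinned down on one side by Riemann--Hurwitz for the degree-$p$ Artin--Schreier map $C \to C/\langle\tau\rangle$, controlled by the pole divisor of $\alpha$, and on the other side by the ramification filtration of $\sigma$ at $\infty$. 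I expect this identification to be the main obstacle, because it requires controlling the interaction of the two Artin--Schreier structures on $L$: the one over $k(t)$ coming from almost rationality, and the one over the fixed field $L^{\langle\sigma^{p^{n-1}}\rangle}$ coming from the order-$p$ subgroup at the bottom of the ramification filtration of $G$. Granting this, $J \colonequals \langle G,\tau\rangle$ is a finite subgroup of $\Aut(X)$ properly containing $G$, so $X$ is an HKG $G$-curve with $\Aut(X) \supsetneq G$.

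With $\Aut(X) \supsetneq G$ in hand, I would invoke the structural classification of HKG curves carrying extra automorphisms. Applying Theorem~\ref{T:order p^n}\eqref{I:cyclic of order p^n for n at least 2} to the cyclic group $G \isom \Z/p^n\Z$ with $n \geq 2$ together with the larger group generated by $J$, the numerical constraints (Hasse--Arf for the breaks of $\sigma$, Riemann--Hurwitz, and the structure of a $p$-group acting with a totally ramified fixed point) should eliminate every case except $p=2$ and $n=2$. In that case $\sigma$ has order~$4$, and Theorem~\ref{T:surjection for order 4} makes the order-$4$ situation completely explicit, producing a one-parameter family and in particular the normal form~\eqref{E:explicit sigma 1}.

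Finally I would close the loop by direct verification. One checks that for each $b \in k$ with $b \neq 0$ the equation $\beta^2-\beta = t^3+(b^2+b+1)t^2$ has a unique solution $\beta \in tk[[t]]$, that the resulting $\sigma_b$ is an almost rational automorphism of exact order~$4$, and that $L = k(t,\beta)$ is the associated $\Z/2\Z$ Artin--Schreier extension of $k(t)$. Determining when $\sigma_b$ and $\sigma_{b'}$ are conjugate then amounts to tracking the Artin--Schreier class of this extension under conjugation, which changes $b$ exactly by elements of $\wp(k)$; this yields both the conjugacy of every almost rational order-$p^n$ automorphism to some $\sigma_b$ and the asserted uniqueness of $b$ modulo $\wp(k)$.
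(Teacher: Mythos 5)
Your skeleton is the paper's proof: Theorem~\ref{T:order p^n}\eqref{I:cyclic of order p^n for n at least 2} forces $p=n=2$ and produces a genus-$1$ HKG $\Z/4\Z$-curve, and Theorem~\ref{T:surjection for order 4} then gives the normal form $\sigma_b$ and the conjugacy criterion $b\equiv b'\pmod{\wp(k)}$. But the mechanism you propose for the first step is not the one that works, and the step you flag as ``the main obstacle'' is not actually an obstacle. You want to first identify $C$ (the smooth model of $L=k(t,\beta)$) with the HKG $G$-curve, then use the extra automorphism $\tau\in\Gal(L/k(t))$ and ``numerical constraints'' to eliminate all $(p,n)$ except $(2,2)$. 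Neither half is substantiated, and the second cannot work as stated: Hasse--Arf and Riemann--Hurwitz for $C\to C/G$ give only a \emph{lower} bound on $g_C$, namely $g_C\ge p(p^n-1)(p^{n-1}-1)/(2(p+1))$ via Lemma~\ref{L:lower bound on different}, while your proposed upper bound from the degree-$p$ Artin--Schreier cover $C\to\PP^1_t$ is ``controlled by the pole divisor of $\alpha$'' --- which is an unconstrained parameter, so this gives no bound at all. The extra automorphism $\tau$ contributes nothing here either (and $J=\langle G,\tau\rangle$ need not be amenable to the solvable-group classification without further work). What actually closes the argument is the Castelnuovo-type upper bound $g_C\le(d-1)^2=(p-1)^2$ of Theorem~\ref{T:order p^n}\eqref{I:d-controlled}, which uses that $L$ is generated by the conjugates $\sigma^i(t)$, \emph{each} of which generates an index-$d$ genus-$0$ subfield ($d=p$ here because $L/k(t)$ is a degree-$p$ Artin--Schreier extension); pitting $(p-1)^2$ against the wild-ramification lower bound forces $p=d=n=2$ with equality everywhere. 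You never invoke this upper bound, so your case elimination is a gap.

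Relatedly, the HKG property of $(C,x)$ does not need to be established in advance by ``controlling the interaction of the two Artin--Schreier structures'': in the paper it falls out for free, since equality in the genus lower bound is precisely the criterion of Proposition~\ref{P:G-curve vs. P-curve}\eqref{I:genus of HKG curve}, and equality is forced by the sandwich above. Two smaller points: your final paragraph restricts to $b\ne 0$ for no reason (the case $b=0$ is the Chinburg--Symonds example and is included in the theorem), and the claim that conjugacy of $\sigma_b$ and $\sigma_{b'}$ ``amounts to tracking the Artin--Schreier class of the extension'' is only a heuristic for what is, in the paper, a genuine computation with $G$-equivariant isomorphisms of the completed local rings of the curves $E_{a,b}$ at infinity (part~\eqref{I:E_ab and E_a'b'} of Theorem~\ref{T:surjection for order 4}); the Artin--Schreier class of $L/k(t)$ alone does not obviously see $b$.
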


\begin{remark}
If $k$ is algebraically closed, then $\wp(k)=k$, 
so Theorem~\ref{T:almost rational} implies that 
all almost rational automorphisms of order~$4$
lie in one conjugacy class in $\Aut(k[[t]])$.
\end{remark}

\begin{remark}
The  example in \cite{chinburgsymonds} was
\begin{align*}
	\sigma_0(t)
	&=  t + t^2 + \sum_{j = 0}^\infty \sum_{\ell =0 }^{2^{j}-1} t^{6\cdot 2^j+2 \ell} \\
	&= t + t^2 + (t^6) + (t^{12} + t^{14}) + (t^{24} + t^{26} + t^{28} + t^{30}) + \cdots \\
        &= \frac{t}{1+t} + \frac{\gamma}{(1+t)^2}
\end{align*}
over $\F_2$, 
where the series $\gamma \colonequals \sum_{i = 0}^{\infty} (t^3 + t^4)^{2^i}$
satisfies $\gamma^2-\gamma = t^3+t^4$.
(If $\beta$ is as in Theorem~\ref{T:almost rational},
then $\gamma=\beta+t^2$.)
Zieve and Scherr communicated to us that the inverse of $\sigma_0$
has a simpler series, namely 
\[
	\sigma_1(t) = t^{-2} \sum_{i=0}^\infty (t^3+t^4)^{2^i}
	= \sum_{i=0}^\infty t^{3 \cdot 2^i-2} + \sum_{j=2}^\infty t^{2^j-2}.
\]
In general, the inverse of $\sigma_b$ is $\sigma_{b+1}$ (Remark~\ref{R:inverse}).
\end{remark}

\begin{remark}
Let $\sigma$ be any element of finite order in $\Aut(k[[t]])$.
Even if $\sigma$ is not almost rational,
we can assume after conjugation that 
the power series $\sigma(t) = \sum_{i \ge 1} a_i t^i$ 
is algebraic over $k(t)$,
as mentioned above.
When $k$ is finite, this implies that the sequence $(a_i)$
is Turing computable, and even \defi{$p$-automatic};
i.e., there is a finite automaton that calculates $a_i$
when supplied with the base~$p$ expansion of~$i$ \cite{christol, christoletal}.
\end{remark}

\subsection{Harbater--Katz--Gabber $G$-curves}
\label{s:HKGdefs}

An order $p^n$ element of $\Aut(k[[t]])$
induces an injective homomorphism $\Z/p^n\Z \to \Aut(k[[t]])$.
Suppose that we now replace $\Z/p^n\Z$ with any finite group $G$.
Results of Harbater \cite[\S2]{harbater} when $G$ is a $p$-group, 
and of Katz and Gabber \cite[Main Theorem~1.4.1]{katz} in general, 
show that any injective $\alpha \colon G \to \Aut(k[[t]])$
arises from a $G$-action on a curve.
More precisely, $\alpha$ arises from a triple $(X,x,\phi)$ consisting of 
a smooth projective curve $X$,
a point $x \in X(k)$, 
and an injective homomorphism $\phi \colon G \to \Aut(X)$ 
such that $G$ fixes $x$:
here $\alpha$ expresses 
the induced action of $G$ on the completed local ring $\widehat{\OO}_{X,x}$
with respect to some uniformizer $t$.
In Section~\ref{S:HKG G-curves} 
we will define a Harbater--Katz--Gabber $G$-curve (HKG $G$-curve)
to be a triple $(X,x,\phi)$ as above
with $X/G \isom \PP^1_k$ 
such that apart from $x$ there is at most one non-free $G$-orbit,
which is tamely ramified if it exists.
We will sometimes omit $\phi$ from the notation.

HKG $G$-curves 
play a key role in our proof of Theorem~\ref{T:almost rational}.
Our overall strategy is to reduce Theorem~\ref{T:almost rational}
to the classification of certain HKG $G$-curves,
and then to use geometric tools 
such as the Hurwitz formula 
to complete the classification.

\subsection{Harbater--Katz--Gabber $G$-curves with extra automorphisms}
\label{s:extrauts}

In this section, $(X,x)$ is an HKG $G$-curve
and $J$ is a finite group such that $G \le J \le \Aut(X)$.
We do not assume a priori that $J$ fixes $x$.
Let $g_X$ be the genus of~$X$.

\begin{question}
\label{q:bigquestion}
Must $(X,x)$ be an HKG $J$-curve?
\end{question}

The answer is sometimes yes, sometimes no.
Here we state our three main theorems in this direction;
we prove them in Section~\ref{S:extra automorphisms}.

\begin{theorem}
\label{T:J fixes x}
We have that $(X,x)$ is an HKG $J$-curve if and only if $J$ fixes $x$.
\end{theorem}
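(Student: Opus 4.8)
The plan is to prove the two implications separately. The forward (``only if'') direction is immediate: by the very definition of an HKG $J$-curve, the group $J$ must fix the marked point, so if $(X,x)$ is an HKG $J$-curve then $J$ fixes $x$ and there is nothing to prove. All of the content is in the converse, so I assume from now on that $J$ fixes $x$ and check the two defining properties of an HKG $J$-curve, namely that $X/J \cong \PP^1_k$ and that, apart from $x$, the cover $\pi_J \colon X \to X/J$ has at most one non-free orbit, which must be tame.

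The quotient being $\PP^1_k$ is the easy half. The natural finite morphism $\rho \colon X/G \to X/J$ is separable, since $k(X)^J \subseteq k(X)^G \subseteq k(X)$ with $k(X)/k(X)^J$ Galois; as $X/G \cong \PP^1_k$, the target $X/J$ has genus $0$, and since it carries the rational point $\pi_J(x)$ it is $\PP^1_k$. The ramification statement is where I would do the real work, via a three-fold Riemann--Hurwitz comparison. Write $\pi_G\colon X \to X/G$, $N \colonequals |J|/|G| = \deg\rho$, $\infty \colonequals \pi_G(x)$, and $\infty' \colonequals \pi_J(x)$. Because $J$ fixes $x$, the point $x$ is the unique point of $X$ over $\infty'$ and is totally ramified in $\pi_J$; hence $\rho$ is totally ramified of index $N$ over $\infty'$, with $\infty$ as its only preimage. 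Let $D_G, D_J$ denote the different exponents of $\pi_G,\pi_J$ at $x$, and $d_\rho$ that of $\rho$ at $\infty$; transitivity of the different in the tower gives $D_J = D_G + |G|\,d_\rho$.

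Now I would write the Riemann--Hurwitz identities for $\pi_G$, for $\pi_J$, and for $\rho\colon\PP^1_k \to \PP^1_k$, divide the first two by $|G|$ and $|J|$ respectively, and substitute the tower formula. The point is that the unknown (and possibly large) wild term $D_G$ at $x$ cancels, leaving an exact expression for the remaining ramification
\[
	S_J \colonequals \sum_{Q \neq \infty'} \frac{d_{P_Q}}{e_Q}
\]
of $\pi_J$ (the sum over branch points $Q\neq\infty'$, where $e_Q$ is the ramification index and $d_{P_Q}$ the different exponent above $Q$) in terms of the off-$\infty$ ramification of $\rho$ alone. Concretely, using the Riemann--Hurwitz identity $d_\rho + \sum_j \delta_j = 2N-2$ for $\rho$ and the HKG hypothesis on the $G$-side, I expect to obtain $S_J = \bigl(1 - \tfrac1e + \sum_j \delta_j\bigr)/N$, where $e$ is the tame index of the unique non-free $G$-orbit (set $e=1$ if there is none) and the $\delta_j$ are the remaining different exponents of $\rho$.

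Finally I would bound this quantity. Total ramification of $\rho$ over $\infty'$ forces $d_\rho \ge N-1$, hence $\sum_j \delta_j \le N-1$, and therefore $S_J \le 1 - \tfrac{1}{eN} < 1$. The desired conclusion is then forced: a wild branch point $Q \neq \infty'$ would contribute $d_{P_Q}/e_Q \ge 1$ to $S_J$, and two distinct tame branch points would contribute at least $\tfrac12 + \tfrac12 = 1$; either possibility contradicts $S_J < 1$. Thus $\pi_J$ is ramified away from $\infty'$ over at most one point, and that point is tame, which is exactly the HKG $J$-condition. I expect the main obstacle to be organizational rather than conceptual: arranging the three Riemann--Hurwitz formulas together with the tower different formula so that the wild contribution $D_G$ at $x$ cancels cleanly. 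The two facts that make the final estimate work are that \emph{both} quotients $X/G$ and $X/J$ have genus $0$, and that $\rho$ is totally ramified over $\infty'$; it is the interplay of these that pins $S_J$ strictly below $1$.
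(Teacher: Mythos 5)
Your proof is correct, but it takes a genuinely different route from the paper's. The paper argues group-theoretically: it reduces to Sylow $p$-subgroups via Proposition~\ref{P:G-curve vs. P-curve}, notes that when $J$ normalizes $G$ the $p$-group $P_J/P$ acts on $X/P \isom \PP^1_k$ fixing $\infty$ and hence by translations on $\Aff^1_k$ (so $X/P \to X/P_J$ is unramified off $\infty$), and then handles general $J$ by ascending a chain of subgroups from $P$ to $P_J$, each normal in the next. You instead run a quantitative Riemann--Hurwitz comparison of the three covers $X \to X/G$, $X \to X/J$, and $\rho\colon X/G \to X/J$, using transitivity of the different at the totally ramified point $x$ to cancel the unknown wild term $D_G$; I checked the algebra and your formula $S_J = \bigl(1 - \tfrac1e + \sum_j \delta_j\bigr)/N$ and the bound $S_J \le 1 - \tfrac{1}{eN} < 1$ are correct, and the concluding contributions ($\ge 1$ for a wild branch point since $d \ge e$ when $G_1 \ne 1$, and $\ge \tfrac12$ per tame branch point) do rule out everything except one tame orbit. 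Two small points: you should say explicitly that you first extend scalars to $\kbar$ before invoking the Hurwitz formula (cf.\ Remark~\ref{rem:hurwitz}; this is harmless by the base-change property of HKG curves), and you implicitly use that the fiber of $\pi_J$ over a branch point is a single $J$-orbit of size $|J|/e_Q$, which is fine over $\kbar$. The trade-off: the paper's softer argument avoids different computations entirely and yields along the way that the HKG property propagates up the whole normal chain of $p$-groups (a fact reused in Theorem~\ref{thm:cycp}), whereas your argument is a single self-contained computation with an explicit numerical bound and no induction.
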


When $g_X>1$, Theorem~\ref{thm:kgthm} below gives a weaker hypothesis 
that still is sufficient to imply that $(X,x)$ is an HKG $J$-curve.
Let $J_x$ be the decomposition group $\Stab_J(x)$.

\begin{definition}
\label{D:mixitup}
We call the action of $J$ \defi{mixed} 
if there exists $\sigma \in J$ such that $\sigma(x) \ne x$ and 
$\sigma(x)$ is nontrivially 
but tamely ramified with respect to the action of $J_x$,
and \defi{unmixed} otherwise.
\end{definition}

\begin{theorem}
\label{thm:kgthm}
If $g_X>1$ and the action of $J$ is unmixed, 
then $(X,x)$ is an HKG $J$-curve.
\end{theorem}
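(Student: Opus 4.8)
The plan is to reduce everything to Theorem~\ref{T:J fixes x}: since that result already shows $(X,x)$ is an HKG $J$-curve exactly when $J$ fixes $x$, it suffices to prove that the hypotheses $g_X>1$ and unmixedness force $J_x=J$. First I would record two facts that come for free. Because $G\le J$, the $J$-orbits are unions of $G$-orbits, so $X/G\isom\PP^1_k$ admits a finite surjection onto $X/J$; hence $X/J$ has genus $0$, and as it carries the rational point given by the image of $x$ we get $X/J\isom\PP^1_k$, and likewise $X/J_x\isom\PP^1_k$. The key opening move is to apply Theorem~\ref{T:J fixes x} to the decomposition group $J_x$ itself: since $J_x$ fixes $x$ and contains $G$, that theorem shows $(X,x)$ is an \emph{HKG $J_x$-curve}. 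Thus $X\to X/J_x$ is totally ramified at $x$, wildly ramified nowhere else, and has at most one further (tame) orbit; and since $g_X>1$, a short Hurwitz-formula check rules out the totally tame case, so the ramification at $x$ is genuinely wild and $p\mid|J_x|$.

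Next I would extract the arithmetic content of ``unmixed.'' For $\sigma\notin J_x$ the point $\sigma x\ne x$ is, by the HKG $J_x$-structure, not wildly ramified for $J_x$, and by the unmixed hypothesis it is not nontrivially tamely ramified for $J_x$ either; hence $(J_x)_{\sigma x}=J_x\cap\sigma J_x\sigma^{-1}=1$. So $J_x$ is a trivial-intersection subgroup of $J$. Letting $r\colonequals[J:J_x]$, the free $J_x$-action on the $r-1$ nontrivial cosets in $J/J_x$ gives $|J_x|\mid r-1$, so $p\nmid r$; therefore the wild inertia $P_x$ at $x$ (the normal Sylow $p$-subgroup of $J_x$) is a Sylow $p$-subgroup of all of $J$. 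Since every nonidentity $p$-element of $J_x$ fixes only $x$ (such an element renders its fixed point wild for $J_x$), the conjugate statement shows every nonidentity $p$-element of $J$ fixes a unique point, lying in $J\cdot x$. Consequently \emph{all} wild ramification of $X\to X/J$ is concentrated on the single orbit $J\cdot x$.

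With the ramification pinned down I would finish by comparing two Hurwitz computations. Write $N\colonequals 2g_X-2>0$. The HKG $J_x$-formula gives the different exponent at $x$ as $d=N+2|J_x|-T$, where $T=|J_x|(1-1/e)$ is the contribution of the (at most one) tame $J_x$-orbit, of inertia order $e$. Feeding $d$, the orbit $J\cdot x$ of size $r$ (each point contributing the same $d$, since the local extension at $x$ depends only on $J_x$), and the tame-for-$J$ orbits into Riemann--Hurwitz for $X\to X/J\isom\PP^1_k$ and simplifying yields $|J|(1-1/e)=(r-1)N+\sum_j|J|(1-1/e'_j)$, the sum over the tame-for-$J$ orbits with inertia orders $e'_j$. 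If $r\ge2$ this forces $\sum_j(1-1/e'_j)<1-1/e<1$, so there is at most one such orbit; but the tame $J_x$-orbit is disjoint from $J\cdot x$ (by trivial intersection) and stays tame for $J$, so there is exactly one, say $J\cdot w$ with inertia order $e'_1=|J_w|$. The relation then reads $e'_1<e$, contradicting $e=|(J_x)_w|\le|J_w|=e'_1$. Hence $r=1$, i.e. $J$ fixes $x$, and Theorem~\ref{T:J fixes x} concludes.

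I would expect the main obstacle to be precisely the step guaranteeing that no ramification of $X\to X/J$ escapes this bookkeeping: a priori the larger group $J$ could acquire new wildly ramified points outside $J\cdot x$, at places where $G$ (and even $J_x$) act freely, and it is the trivial-intersection/Sylow argument powered by unmixedness that confines all wildness to $J\cdot x$ and caps the tame orbits, turning the two Hurwitz formulas into a genuine contradiction.
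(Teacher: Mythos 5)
Your proof is correct and follows essentially the same route as the paper's: reduce to showing $J$ fixes $x$, use the fact that $(X,x)$ is an HKG $J_x$-curve, and compare the Hurwitz formulas for $X \to X/J_x$ and $X \to X/J$, with unmixedness keeping the orbit $Jx$ disjoint from the tame orbit so that the resulting relation $(r-1)(2g_X-2) \le |J|/e'_1 - |J|/e \le 0$ forces $r=1$. The paper reaches this faster by merely lower-bounding the ramification in the $J$-Hurwitz formula, so your trivial-intersection/Sylow analysis pinning down \emph{all} the wild ramification of $X \to X/J$, while correct, is more than is needed.
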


We will also answer Question~\ref{q:bigquestion} 
in an explicit way when $g_X \le 1$,
whether or not the action of $J$ is mixed.

Finally, if $J$ is solvable, the answer to Question~\ref{q:bigquestion} 
is almost always yes, as the next theorem shows.
For the rest of the paper, $\kbar$ denotes an algebraic closure of~$k$.

\begin{theorem}
\label{T:solvable}
If $J$ is solvable and $(X,x)$ is \emph{not} an HKG $J$-curve,
then one of the following holds:
\begin{itemize}
\item $X \isom \PP^1$;
\item $p$ is $2$ or $3$, and $X$ is an elliptic curve of $j$-invariant $0$;
\item $p=3$, and $X$ is isomorphic over $\kbar$ to 
the genus~$3$ curve $z^4 = t^3 u - t u^3$ in $\PP^2$; or
\item $p=2$, and $X$ is isomorphic over $\kbar$ 
to the smooth projective model of the genus~$10$ affine curve $z^9 = (u^2+u)(u^2+u+1)^3$.
\end{itemize}
\end{theorem}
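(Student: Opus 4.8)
The plan is to combine the two preceding theorems with the Hurwitz formula applied to the quotient $X \to X/J$, organizing the argument by the value of $g_X$.

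First I would record the basic reduction. By Theorem~\ref{T:J fixes x}, saying that $(X,x)$ is \emph{not} an HKG $J$-curve is the same as saying that $J$ does not fix $x$, i.e.\ $\Stab_J(x) \subsetneq J$. Since $G \le \Stab_J(x)$ and $\Stab_J(x)$ fixes $x$, the same theorem shows $(X,x)$ is an HKG $\Stab_J(x)$-curve; so I may replace $G$ by $\Stab_J(x)$ and assume that $G = \Stab_J(x)$ is the full decomposition group and a proper subgroup of $J$. If $g_X = 0$ then $X \isom \PP^1$, the first case. If $g_X = 1$, then taking $x$ as origin makes $X$ an elliptic curve $E$ with $G \le \Aut(E,x)$; as $G$ is wildly ramified at $x$ we need $p \mid |G|$, forcing $p \mid |\Aut(E,x)|$, which happens only for $j(E)=0$ and $p \in \{2,3\}$, since for every other pair the group $\Aut(E,x)$ has order prime to~$p$. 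This is the second case, and such $E$ really do carry a solvable $J$ (translations together with $\Aut(E,x)$) that moves $x$.

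The substance is the case $g_X > 1$. Here Theorem~\ref{thm:kgthm} applies: since $(X,x)$ is not an HKG $J$-curve, the action of $J$ must be mixed, so by Definition~\ref{D:mixitup} some $\sigma \in J$ carries $x$ to a point $\sigma(x) \ne x$ that is nontrivially but tamely ramified for $G$. Because $(X,x)$ is an HKG $G$-curve, the cover $X \to X/G \isom \PP^1$ has $x$ as its unique wildly (totally) ramified point and at most one further, tamely ramified, branch orbit; mixedness forces that tame orbit to exist and to meet the $J$-orbit of $x$. Moreover $G \le J$ gives $k(X)^J \subseteq k(X)^G = k(X/G)$, a rational function field, so L\"uroth's theorem yields $X/J \isom \PP^1$ as well. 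Thus $X \to X/J \isom \PP^1$ is a Galois cover of the line with solvable group $J$, whose ramification is concentrated on the $J$-orbits of the wild point $x$ and of the tame point.

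Finally I would use solvability to force the finite list. Factoring the cover $X \to X/J \isom \PP^1$ into a tower of cyclic covers (possible since $J$ is solvable) and feeding the restricted ramification into the Hurwitz formula bounds each step, hence bounds $g_X$. To pin down the curves I would track the finite solvable subgroup of $\PGL_2(\kbar)$ that controls how $J$ permutes these distinguished orbits, invoke the classification of finite solvable subgroups of $\PGL_2(\kbar)$, and keep only those of order divisible by $p$ compatible with the mixed configuration. The surviving possibilities should be $\PGL_2(\F_3) \isom S_4$ in characteristic $3$, with branch locus $\PP^1(\F_3)$ and cyclic cover $z^4 = t^3u - tu^3$ of genus $3$, and a solvable configuration over $\PP^1(\F_4)$ in characteristic $2$ giving $z^9 = (u^2+u)(u^2+u+1)^3$ of genus $10$; in each case one reads the genus off Riemann--Hurwitz and checks directly that the curve is an HKG $G$-curve carrying a solvable $J$ that does not fix $x$. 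The main obstacle is exactly this last step: combining the wild higher ramification at $x$ with the tame mixing in the Hurwitz count to bound $g_X$ and extract a finite list. Solvability is indispensable here, since it is what keeps the relevant subgroup of $\PGL_2(\kbar)$ small enough to enumerate --- for nonsolvable $J$ the groups $\PSL_2(\F_q)$ would reappear and the list would not terminate.
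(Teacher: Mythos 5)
Your reduction and the cases $g_X\le 1$ are essentially right and track the paper: Theorem~\ref{T:J fixes x} lets you replace $G$ by $J_x$, the case $g_X=0$ is immediate, and for $g_X=1$ the observation that $p$ must divide $|G|$ (else Example~\ref{Ex:p'-curve} forces $g_X=0$) pins down $j=0$ and $p\in\{2,3\}$ as in Example~\ref{Ex:genus 1}. The gap is in the case $g_X>1$, and it is exactly the step you flag as ``the main obstacle.'' Knowing that the action is mixed and that $X/J\isom\PP^1$ does not by itself bound $g_X$: a Galois cover of $\PP^1$ branched over the wild orbit of $x$ plus tame orbits can have arbitrarily large genus --- the Giulietti--Korchm\'aros curves in the paper's mixed-action example are precisely such covers with $g_X>1$ unbounded. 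Likewise, ``the finite solvable subgroup of $\PGL_2(\kbar)$ that controls how $J$ permutes these distinguished orbits'' is not defined until you have produced a genus-$0$ (or genus-$1$) quotient on which $J$, or a quotient of $J$, genuinely acts; $J$ does not act on $X/G$ unless $G\lhd J$. Running Hurwitz down a cyclic tower for solvable $J$ also does not terminate the list by itself, because the wild different at $x$ grows with $|P_x|$ and nothing in your setup caps $|P_x|$.

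The missing idea is how solvability actually enters. The paper takes the maximal $p'$-subgroup $C$ of $J_x$ that is normal in $J$ (Lemma~\ref{L:maximal p' subgroup}); solvability of $J$ gives a nontrivial normal abelian subgroup, which by Corollary~\ref{cor:Jx} and Lemma~\ref{la:absub} must lie inside $J_x$, so $C\ne 1$ (Lemma~\ref{L:C not 1}). Since $C\lhd J$, the group $J/C$ acts on $Y=X/C$, and maximality of $C$ forces $g_Y\le 1$ (otherwise the same lemma applied to $Y$ would enlarge $C$). The cover $X\to Y$ is cyclic Kummer of degree $n=|C|$, totally ramified over the $J$-orbit $S'$ of $y$, and a divisor computation gives $(J:J_x)=|S'|=|P_x|+1$ and $n\mid |S'|$, with $n\ne |S'|$ when $g_Y=1$. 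Feeding the genus-$0$ classification (part (a) of Theorem~\ref{thm:kgsolv}) resp.\ the elliptic-curve case into these constraints leaves only $(p,n)=(3,4)$ over $\PP^1$ and $(p,n,|P_x|)=(2,3,8)$ or $(3,2,3)$ over the $j=0$ elliptic curve, whence the two explicit equations. Without some substitute for this quotient construction, your sketch cannot produce a finite list.
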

Each case in Theorem~\ref{T:solvable} actually arises.
We prove a stronger version of Theorem~\ref{T:solvable} in Theorem~\ref{thm:kgsolv} using
the examples discussed in Section~\ref{S:constructions}.

\section{Automorphisms of $k[[t]]$}
\label{s:ARresults}

The purpose of this section is to recall some basic results about $\Aut(k[[t]])$.

\subsection{Groups that are cyclic mod~$p$}

A \defi{$p'$-group} is a finite group of order prime to~$p$.
A finite group $G$ is called \defi{cyclic mod~$p$}
if it has a normal Sylow $p$-subgroup such that the quotient is cyclic.
Equivalently, $G$ is cyclic mod~$p$
if $G$ is a semidirect product $P \rtimes C$ with $P$ a $p$-group
and $C$ a cyclic $p'$-group.
In this case, $P$ is the unique Sylow $p$-subgroup of $G$,
and the Schur--Zassenhaus theorem \cite[Theorem~3.12]{Isaacs2008} 
implies that every subgroup of $G$ isomorphic to $C$ is conjugate to $C$.

\subsection{The Nottingham group}

Any $k$-algebra automorphism $\sigma$ of $k[[t]]$ 
preserves the maximal ideal and its powers,
and hence is $t$-adically continuous,
so $\sigma$ is uniquely determined by specifying 
the power series $\sigma(t) = \sum_{n \ge 1} a_n t^n$ (with $a_1 \in k^\times$).
The map $\Aut(k[[t]]) \to k^\times$ sending $\sigma$ to $a_1$
is a surjective homomorphism.
The \defi{Nottingham group} $\mathcal{N}(k)$ 
is the kernel of this homomorphism;
it consists of the power series $t + \sum_{n \ge 2} a_n t^n$ under composition.
Then $\Aut(k[[t]])$ is a semidirect product $\mathcal{N}(k) \rtimes k^\times$.
For background on $\mathcal{N}(k)$, see, e.g., \cite{camina2}. 

If $k$ is finite, then $\mathcal{N}(k)$ is a pro-$p$ group.
In general, $\mathcal{N}(k)$ is pro-solvable
with a filtration whose quotients are isomorphic to $k$ under addition;
thus every finite subgroup of $\mathcal{N}(k)$ is a $p$-group.
Conversely, Leedham-Green and Weiss, using techniques of Witt, 
showed that any finite $p$-group 
can be embedded in $\mathcal{N}(\F_p)$; 
indeed, so can any countably based pro-$p$ group~\cite{camina}.
The embeddability of finite $p$-groups follows alternatively from 
the fact that the maximal pro-$p$ quotient of 
the absolute Galois group of $k((t^{-1}))$ 
is a free pro-$p$ group of infinite rank \cite[(1.4.4)]{katz}.

On the other hand, any finite subgroup of $k^\times$ is a cyclic $p'$-group.
Thus any finite subgroup of $\Aut(k[[t]])$ is cyclic mod~$p$,
and any finite $p$-group in $\Aut(k[[t]])$ is contained in $\mathcal{N}(k)$.

\subsection{Algebraic automorphisms of $k[[t]]$}
\label{S:algebraic automorphisms}

Call $\sigma \in \Aut(k[[t]])$ \defi{algebraic} if
$\sigma(t)$ is algebraic over $k(t)$.  
\begin{proposition}
\label{P:algebraic automorphisms}
The set $\Aut_{\alg}(k[[t]])$ of all algebraic automorphisms of $k[[t]]$
over $k$ is a subgroup of $\Aut(k[[t]])$.  
\end{proposition}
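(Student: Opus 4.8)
The plan is to show that $\Aut_{\alg}(k[[t]])$ is closed under composition and inversion, and contains the identity (the latter being trivial, since $t$ is algebraic over $k(t)$). Throughout I will use the basic fact that if $L/k(t)$ is a finite field extension inside $k((t))$, then every element of $L$ is algebraic over $k(t)$, and that $k((t))$ is a field so these manipulations make sense.

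First I would handle \emph{composition}. Suppose $\sigma, \tau \in \Aut_{\alg}(k[[t]])$, so $\sigma(t)$ and $\tau(t)$ are each algebraic over $k(t)$. I want $(\sigma \circ \tau)(t) = \sigma(\tau(t))$ to be algebraic over $k(t)$. The key observation is that $\sigma$ extends to a $k$-algebra automorphism of $k((t))$, and hence induces an automorphism of the algebraic closure situation: since $\sigma(t)$ satisfies a polynomial $f(\sigma(t), t) = 0$ with $f \in k[X,Y]$, applying the ring homomorphism $\tau$ (which fixes $k$ and sends $t \mapsto \tau(t)$) gives $f(\sigma(\tau(t)), \tau(t)) = 0$, so $\sigma(\tau(t))$ is algebraic over $k(\tau(t))$. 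Since $\tau(t)$ is itself algebraic over $k(t)$, the extension $k(\tau(t))/k(t)$ is finite, and therefore $k(\tau(t), \sigma(\tau(t)))/k(t)$ is finite by transitivity of algebraicity. Hence $\sigma(\tau(t))$ is algebraic over $k(t)$, as desired.

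Next I would handle \emph{inversion}. Given $\sigma \in \Aut_{\alg}(k[[t]])$, I must show $\sigma^{-1}(t)$ is algebraic over $k(t)$. Write $s \colonequals \sigma^{-1}(t)$, so $\sigma(s) = t$. Because $\sigma(t)$ is algebraic over $k(t)$, the field $k(t, \sigma(t)) \subseteq k((t))$ is a finite extension of $k(t)$; equivalently $t$ is algebraic over $k(\sigma(t))$. I would apply $\sigma^{-1}$ to this algebraic relation: if $g(t, \sigma(t)) = 0$ with $g \in k[X,Y]$ exhibiting $t$ as a root of a polynomial over $k(\sigma(t))$, then applying $\sigma^{-1}$ (a $k$-algebra homomorphism sending $t \mapsto \sigma^{-1}(t) = s$ and $\sigma(t) \mapsto t$) yields $g(s, t) = 0$, which shows $s = \sigma^{-1}(t)$ is algebraic over $k(t)$.

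The step I expect to be the main (though still mild) obstacle is making the substitution arguments fully rigorous: I must be careful that applying $\sigma$ or $\tau$ to an algebraic identity is legitimate, which rests on the fact that these are continuous $k$-algebra automorphisms of $k((t))$ fixing $k$ pointwise, so they commute with forming polynomial expressions with coefficients in $k$. The cleanest formulation is to argue entirely at the level of finite subextensions of $k((t))/k(t)$: composition and inversion each take an algebraic element to an element lying in a finite extension of $k(t)$, using transitivity of finiteness of field extensions. Once the substitution principle is stated carefully, the closure properties follow formally, and together with the identity they establish that $\Aut_{\alg}(k[[t]])$ is a subgroup.
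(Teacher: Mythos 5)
Your proposal is correct and follows essentially the same route as the paper: closure under composition by applying the automorphism $\tau$ to the algebraic relation satisfied by $\sigma(t)$ and invoking transitivity, and closure under inversion by exploiting the symmetry of algebraic dependence between $t$ and $\sigma(t)$ (the paper phrases this as ``$t$ is not algebraic over $k$,'' which is the same justification your ``equivalently'' implicitly relies on, since one must know $\sigma(t)$ is transcendental over $k$).
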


\begin{proof}
Suppose that $\sigma \in \Aut_{\alg}(k[[t]])$,
so $\sigma(t)$ is algebraic over $k(t)$.
Applying another automorphism $\tau \in \Aut(k[[t]])$ to the algebraic relation 
shows that $\sigma(\tau(t))$ is algebraic over $k(\tau(t))$.
So if $\tau$ is algebraic, so is $\sigma \circ \tau$. 
On the other hand, taking $\tau = \sigma^{-1}$ 
shows that $t$ is algebraic over $k(\sigma^{-1}(t))$.
Since $t$ is not algebraic over~$k$, 
this implies that $\sigma^{-1}(t)$ is algebraic over $k(t)$.
\end{proof}

\subsection{Automorphisms of order $p$}
\label{S:automorphisms of order p}

The following theorem was proved by Klopsch~\cite[Proposition~1.2]{klopsch}
and reproved by Lubin~\cite[\S4]{lubin}
(they assumed that $k$ was finite, but this is not crucial).
Over algebraically closed fields it was shown in \cite[p.~211]{BertinMezard} by
 Bertin and M\'{e}zard, who mention related work of
Oort, Sekiguchi and Suwa in \cite{OSS}.
For completeness, we give here a short proof, similar to the proofs 
in \cite[Appendix]{klopsch} and \cite[p.~211]{BertinMezard}; 
it works over any perfect field $k$ of characteristic $p>0$.

\begin{theorem}
\label{T:order p}
Every $\sigma \in \mathcal{N}(k)$ of order~$p$
is conjugate in $\mathcal{N}(k)$ to $t \mapsto t (1+c t^m)^{-1/m}$
for a unique positive integer $m$ prime to~$p$
and a unique $c \in k^\times$.
The automorphisms given by $(m,c)$ and $(m',c')$ 
are conjugate in $\Aut(k[[t]])$
if and only if $m=m'$ and $c/c' \in k^{\times m}$.
\end{theorem}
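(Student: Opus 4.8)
The plan is to prove Theorem~\ref{T:order p} by realizing each order~$p$ automorphism
$\sigma \in \mathcal{N}(k)$ geometrically and then normalizing it.
First I would associate to $\sigma$ its HKG $\Z/p\Z$-curve $(X,x,\phi)$,
as guaranteed by the results of Harbater and of Katz--Gabber recalled in
Section~\ref{s:HKGdefs}. Since $\Z/p\Z$ is a $p$-group, the quotient map
$X \to X/(\Z/p\Z) \isom \PP^1_k$ is ramified only at~$x$ (the unique
non-free orbit, which must be wild because it carries the $p$-group inertia),
and it is totally ramified there. This exhibits $X \to \PP^1_k$ as an
Artin--Schreier cover of $\PP^1_k$ ramified over a single point, which we may
place at~$\infty$. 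Hence $X$ has a plane model $y^p - y = f(x)$ for some
$f \in k[x]$, and by the standard Artin--Schreier normalization we may take
$f$ to be a polynomial of degree $m$ prime to~$p$ (subtracting $\wp$ of a
polynomial kills any term whose exponent is divisible by~$p$), and after a
scaling of~$x$ and an additive adjustment of~$y$ we can arrange
$f(x) = c\,x^m + (\text{lower order})$. The conductor/ramification data at
the wild point, computed via the different, is determined precisely by~$m$,
so $m$ is the unique integer attached to $\sigma$.

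Next I would pin down the explicit power series. The uniformizer at~$x$
pulls back from $\infty \in \PP^1$, and in the completed local ring the action
of the canonical generator of $\Z/p\Z$ on the Artin--Schreier function $y$ is
by $y \mapsto y+1$. Translating this into the variable $t$ via the
relation $y^p - y = f$ yields, after choosing $t$ so that $x = t^{-m}$ up to
units, the asserted normal form $t \mapsto t(1 + c\,t^m)^{-1/m}$; here the
$m$-th root is the unique power series in $1 + t^m k[[t]]$ with the given
leading behavior, so it is well defined in $\mathcal{N}(k)$. That this map
indeed has order exactly~$p$ and reproduces the ramification invariant~$m$ is
a direct check. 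The uniqueness of $(m,c)$ up to conjugacy in
$\mathcal{N}(k)$ follows because $m$ is the lower-numbering ramification
break, an invariant of the conjugacy class, while $c$ is read off from the
leading coefficient of $\sigma(t) - t$, which conjugation by an element of
$\mathcal{N}(k)$ (whose linear term is~$1$) cannot rescale.

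For the second assertion, the extra freedom in $\Aut(k[[t]])$ beyond
$\mathcal{N}(k)$ is exactly the torus $k^\times$ acting by $t \mapsto a t$.
Conjugating $t \mapsto t(1+c t^m)^{-1/m}$ by $t \mapsto at$ replaces
$c$ by $a^m c$ while leaving $m$ unchanged. Therefore two such automorphisms
with data $(m,c)$ and $(m',c')$ are conjugate in $\Aut(k[[t]])$ precisely
when $m = m'$ and $c' \in c\cdot k^{\times m}$, i.e. $c/c' \in k^{\times m}$.
I would also note that $m' = m$ is forced independently by the conjugacy
invariance of the ramification break, so the only genuine identification is
the one on the $c$-coordinate.

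The main obstacle I anticipate is the passage from the global Artin--Schreier
model back to the precise local power series: one must verify that the chosen
uniformizer $t$ at~$x$ can be taken with $x = t^{-m}$ (rather than merely
$t^{-m}$ times a unit) and that the $\Z/p\Z$-action, which is literally
$y \mapsto y+1$ on the function field, transports to exactly
$t \mapsto t(1+c t^m)^{-1/m}$ on the completion. Handling the unit
corrections in the change of variables, and confirming that the perfectness
of~$k$ lets us absorb them by a further conjugation in $\mathcal{N}(k)$
without disturbing $(m,c)$, is the delicate computational core; everything
else is bookkeeping with ramification invariants.
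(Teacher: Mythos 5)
Your overall strategy is sound and ultimately lands on the same Artin--Schreier computation as the paper, but you reach it by a global detour: you invoke the Harbater--Katz--Gabber theorem to realize $\sigma$ on a curve and normalize a global equation $y^p-y=f(x)$, whereas the paper works entirely inside $k((t))$, producing $y$ with $\sigma(y)=y+1$ directly from local Artin--Schreier theory applied to $k((t))/k((t))^{\sigma}$ and choosing $y$ of minimal pole order to force $p\nmid m$. The local argument is self-contained (the theorem sits in Section~\ref{s:ARresults}, before any HKG machinery is developed) and avoids the global-to-local bookkeeping that you defer to the end; your reduction of $f$ modulo $\wp(k[x])$ plays the role of the paper's minimality argument. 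Your uniqueness arguments are correct and essentially match the paper's: the break $m$ and the coefficient of $t^{m+1}$ in $\sigma(t)-t$ are invariant under conjugation by $\mathcal{N}(k)$, while conjugation by $t\mapsto at$ rescales $c$ by $a^{m}$.

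The one step that fails as written is the local normalization ``choose $t$ so that $x=t^{-m}$ up to units.'' The base coordinate $x$ has a pole of order $p$, not $m$, at the totally (wildly) ramified point, since $X\to\PP^1$ has ramification index $p$ there; for the same reason the uniformizer at that point does not pull back from $\infty\in\PP^1$. The function that must be normalized is the Artin--Schreier generator $y$ itself: from $y^p-y=f(x)$ with $\deg f=m$ one gets $v(y)=-m$, so $y=c\,t_0^{-m}u$ for some unit $u$, and since $p\nmid m$ Hensel's lemma lets you absorb $u$ into a new uniformizer $t$ with the same linear term, giving $y=c\,t^{-m}$ exactly; substituting this into $\sigma(y)=y+1$ yields $\sigma(t)=t(1+c^{-1}t^m)^{-1/m}$, which is precisely the paper's computation. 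With $x$ replaced by $y$ your argument goes through, but as stated the substitution is inconsistent: $x=t^{-m}$ would give $v(x)=-m\neq -p$, and feeding it into $y^p-y=cx^m+\cdots$ would force $p\mid m^2$, contradicting $p\nmid m$. Since you flag exactly this passage as the ``delicate computational core'' you have not carried out, the misidentification needs to be repaired before the proof is complete.
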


\begin{proof}
Extend $\sigma$ to the fraction field $k((t))$.
By Artin--Schreier theory, there exists $y \in k((t))$
such that $\sigma(y)=y+1$.
This $y$ is unique modulo $k((t))^\sigma$.
Since $\sigma$ acts trivially on the residue field of $k[[t]]$, 
we have $y \notin k[[t]]$.
Thus $y = c t^{-m} + \cdots$ for some $m \in \Z_{>0}$ and $c \in k^\times$.
Choose $y$ so that $m$ is minimal.
If the ramification index $p$ divided $m$, 
then we could subtract from $y$ an element of $k((t))^\sigma$ 
with the same leading term,
contradicting the minimality of $m$.
Thus $p\nmid m$.
By Hensel's lemma, $y = c (t')^{-m}$ for some $t' = t + \cdots$.
Conjugating by the automorphism $t \mapsto t'$
lets us assume instead that $y = c t^{-m}$.
Substituting this into $\sigma(y)=y+1$ yields
$c \, \sigma(t)^{-m} = c t^{-m} + 1$.
Equivalently, $\sigma(t) = t (1 + c^{-1} t^m)^{-1/m}$.
Rename $c^{-1}$ as $c$.

Although $y$ is determined only modulo $\wp(k((t)))$,
the leading term of a minimal $y$ is determined.
Conjugating $\sigma$ in $\Aut(k[[t]])$
amounts to expressing $\sigma$ with respect to
a new uniformizer $u = u_1 t + u_2 t^2 + \cdots$.
This does not change $m$,
but it multiplies $c$ by $u_1^m$.
Conjugating $\sigma$ in $\mathcal{N}(k)$
has the same effect, except that $u_1=1$,
so $c$ is unchanged too.
\end{proof}

\begin{remark}
For each positive integer $m$ prime to $p$,
let $\Disp_m \colon \mathcal{N}(k) \to \mathcal{N}(k)$ 
be the map sending $t \mapsto f(t)$ to $t \mapsto f(t^m)^{1/m}$
(we take the $m$th root of the form $t+\cdots$).
This is an injective endomorphism of the group $\mathcal{N}(k)$,
called \defi{$m$-dispersal} in \cite{lubin}.
It would be conjugation by $t \mapsto t^m$,
except that $t \mapsto t^m$ is not in $\Aut(k[[t]])$ (for $m>1$).
The automorphisms in Theorem~\ref{T:order p}
may be obtained from $t \mapsto t(1+t)^{-1}$
by conjugating by $t \mapsto ct$ and then dispersing.
\end{remark}

\section{Ramification and the Hurwitz Formula}
\label{sec:hurwitz}

Here we review the Hurwitz formula and related facts we need later.

\subsection{Notation}

By a \defi{curve} over $k$ 
we mean a $1$-dimensional smooth projective geometrically integral scheme $X$ 
of finite type over $k$.
For a curve $X$, let $k(X)$ denote its function field,
and let $g_X$ or $g_{k(X)}$ denote its genus.
If $G$ is a finite group acting on a curve $X$,
then $X/G$ denotes the curve 
whose function field is the invariant subfield $k(X)^G$.

\subsection{The local different}
\label{S:local different}

Let $G$ be a finite subgroup of $\Aut(k[[t]])$.
For $i \ge 0$, define the \defi{ramification subgroup} 
$G_i \colonequals 
\{ g \in G \mid g \mbox{ acts trivially on } k[[t]]/(t^{i+1}) \}$
as usual.
Let $\mathfrak{d}(G) \colonequals \sum_{i=0}^\infty (|G_i|-1) \in \Z_{\ge 0}$;
this is the exponent of 
the \defi{local different} \cite[IV, Proposition~4]{serre}.

\subsection{The Hurwitz formula}

In this paragraph we assume that $k$ is an algebraically closed field of characteristic $p>0$.
Let $H$ be a finite group acting faithfully on a curve $X$ over~$k$.
For each $s \in X(k)$, let $H_s \le H$ be the inertia group.
We may identify $\widehat{\OO}_{X,s}$ with $k[[t]]$
and $H_s$ with a finite subgroup $G \le \Aut(k[[t]])$;
then define $\mathfrak{d}_s = \mathfrak{d}_s(H) \colonequals \mathfrak{d}(H_s)$.
We have $\mathfrak{d}_s>0$ if and only if $s$ is ramified.
If $s$ is tamely ramified, meaning that $H_s$ is a $p'$-group, 
then $\mathfrak{d}_s = |H_s|-1$.
The Hurwitz formula \cite[IV, 2.4]{hartshorne} is
\[
	2g_X-2 = |H|(2g_{X/H}-2) + \sum_{s \in X(k)} \mathfrak{d}_s.
\]
\begin{remark}
\label{rem:hurwitz}
When we apply the Hurwitz formula to a curve over a perfect field 
that is not algebraically closed, 
it is understood that we first extend scalars to an algebraic closure.
\end{remark}

\subsection{Lower bound on the different}
\label{S:breaks}

We continue to assume that $k$ is an algebraically closed field of characteristic $p>0$. 
The following material is taken from \cite[IV]{serre}, 
as interpreted by Lubin in \cite{lubin}.  
Let $G$ and the $G_i$ be as in Section~\ref{S:local different}.
An integer $i \ge 0$ is a \defi{break} in the lower numbering
of the ramification groups of $G$ if $G_i \ne G_{i+1}$.
Let $b_0$, $b_1$, \dots be the breaks in increasing order;
they are all congruent modulo $p$.
The group $G_0/G_1$ embeds into $k^\times$, while $G_i/G_{i+1}$ embeds in the additive group of $k$ if $i \ge 1$.

{}From now on, assume that $G$ is a cyclic group of order $p^n$ with generator $\sigma$. Then $G_0 =  G_1$ and  
each quotient $G_i/G_{i+1}$ is killed by $p$.
Thus there must be exactly $n$ breaks $b_0,\ldots,b_{n-1}$.
If $0 \le i \le b_0$, then $G_i = G$;
if $1 \le j \le n -1$ and $b_{j -1} < i \le b_j$,
then $|G_i| = p^{n-j}$;
and if $b_{n-1} < i$, then $G_i = \{e\}$.
According to the Hasse--Arf theorem, 
there exist positive integers $i_0, \ldots, i_{n-1}$ 
such that $b_j= i_0 +pi_1 + \cdots +p^ji_j$ for $0 \le j \le n-1$. 
Then 
\begin{equation}
\label{E:different formula}
	\mathfrak{d}(G)= (i_0+1)(p^n-1)+i_1(p^n-p) + \cdots +i_{n-1}(p^n-p^{n-1}).
\end{equation}
The upper breaks $b^{(j)}$ we do not need to define here, 
but they have the property that in the cyclic case, 
$b^{(j)}=i_0+ \cdots + i_j$ for $0 \le j \le n-1$. 

Local class field theory shows 
that $p \nmid b^{(0)}$, 
that $b^{(j)} \geq pb^{(j-1)}$ for $1 \le j \le n-1$, 
and that if this inequality is strict then $p \nmid b^{(j)}$; 
this is proved in \cite[XV,~\S2~Thm.~2]{serre} for quasi-finite residue fields, 
and extended to algebraically closed residue fields 
in \cite[Prop. 13.2]{chingurhar}.
Conversely, any sequence of positive numbers $b^{(0)},\ldots,b^{(n-1)}$ 
that satisfies these three conditions 
is realized by some element of order $p^n$ 
in $\Aut(k[[t]])$ \cite[Observation~5]{lubin}.

Thus $i_0 \geq 1$, and $i_j \geq (p-1)p^{j-1}$ for $1 \le j \le n-1$.  
Substituting into~\eqref{E:different formula} yields the following result.

\begin{lemma}
\label{L:lower bound on different}
If $G$ is cyclic of order $p^n$, then
\[
	\mathfrak{d}(G) \geq \frac{p^{2n}+p^{n+1}+p^n-p-2}{p+1}
\]
and this bound is sharp. 
\end{lemma}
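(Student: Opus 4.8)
The plan is to treat the lemma as a straightforward optimization problem, since all the structural input has already been assembled. The different is given by the formula~\eqref{E:different formula},
\[
\mathfrak{d}(G)=(i_0+1)(p^n-1)+i_1(p^n-p)+\cdots+i_{n-1}(p^n-p^{n-1}),
\]
in which every coefficient $p^n-p^j$ (for $0 \le j \le n-1$) is strictly positive. Thus $\mathfrak{d}(G)$ is a strictly increasing function of each $i_j$, so its minimum over all admissible sequences is attained by making each $i_j$ as small as its constraint permits. The constraints established just above the lemma are $i_0 \ge 1$ and $i_j \ge (p-1)p^{j-1}$ for $1 \le j \le n-1$; since each $i_j$ enters with a positive coefficient, substituting $i_0=1$ and $i_j=(p-1)p^{j-1}$ already yields a valid lower bound for \emph{every} admissible sequence, independently of whether these minima are simultaneously achievable.

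The only actual computation is then to evaluate the resulting sum in closed form. First I would write
\[
\mathfrak{d}(G) \ge 2(p^n-1) + (p-1)\sum_{j=1}^{n-1} p^{j-1}\bigl(p^n-p^j\bigr) = 2(p^n-1) + (p-1)\sum_{j=1}^{n-1}\bigl(p^{\,n+j-1}-p^{\,2j-1}\bigr),
\]
and then sum the two geometric series $\sum_{j=1}^{n-1}p^{\,n+j-1}$ and $\sum_{j=1}^{n-1}p^{\,2j-1}$ separately; the first has ratio $p$ and the second ratio $p^2$, the latter introducing the denominator $p^2-1=(p-1)(p+1)$. After cancelling the factor $p-1$ and combining everything over the common denominator $p+1$, the numerator collapses to $p^{2n}+p^{n+1}+p^n-p-2$, which is exactly the claimed bound. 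This step is pure bookkeeping, and the main (modest) obstacle is simply organizing the geometric-series arithmetic so that the factors of $p-1$ cancel cleanly and the numerator simplifies to $p^{2n}+p^{n+1}+p^n-p-2$.

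For sharpness I would exhibit an order-$p^n$ element realizing equality by checking that the minimizing choice is admissible. With $i_0=1$ and $i_j=(p-1)p^{j-1}$, the upper breaks $b^{(j)}=i_0+\cdots+i_j$ telescope to $b^{(j)}=p^{\,j}$ for $0 \le j \le n-1$. These satisfy $p \nmid b^{(0)}=1$ and $b^{(j)}=p\,b^{(j-1)}$ for each $j \ge 1$, so the inequality $b^{(j)} \ge p\,b^{(j-1)}$ holds with equality and the $p$-divisibility condition — required only when that inequality is strict — does not apply. Hence the sequence satisfies the three conditions recalled above, and by Lubin's converse~\cite[Observation~5]{lubin} it is realized by some order-$p^n$ element of $\Aut(k[[t]])$, which attains the lower bound. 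This completes the proof of sharpness.
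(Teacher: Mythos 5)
Your proposal is correct and follows essentially the same route as the paper: the paper likewise substitutes the minimal admissible values $i_0=1$ and $i_j=(p-1)p^{j-1}$ into \eqref{E:different formula} (leaving the geometric-series bookkeeping implicit), and sharpness rests on the same appeal to Lubin's converse for the upper-break sequence $b^{(j)}=p^j$.
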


\begin{remark}
\label{rem:different}
Lemma~\ref{L:lower bound on different}
is valid over any perfect field $k$ of characteristic~$p$,
because extending scalars to $\kbar$ does not change $\mathfrak{d}(G)$.
\end{remark}

\section{Harbater--Katz--Gabber $G$-curves}
\label{s:KGdefs}

Let $k$ be a perfect field of characteristic $p>0$.

\subsection{Pointed $G$-curves}
\label{S:pointed G-curves}

\begin{definition}
A \defi{pointed $G$-curve} over $k$ is a triple $(X,x,\phi)$ consisting of
a curve $X$, a point $x \in X(k)$, 
and an injective homomorphism $\phi \colon G \to \Aut(X)$
such that $G$ fixes $x$.
(We will sometimes omit $\phi$ from the notation.)
\end{definition}

Suppose that $(X,x,\phi)$ is a pointed $G$-curve.
The faithful action of $G$ on $X$ induces a faithful action on $k(X)$.
Since $G$ fixes $x$, the latter action induces 
a $G$-action on the $k$-algebras $\OO_{X,x}$ and $\widehat{\OO}_{X,x}$.
Since $\Frac(\OO_{X,x}) = k(X)$ and $\OO_{X,x} \subseteq \widehat{\OO}_{X,x}$,
the $G$-action on $\widehat{\OO}_{X,x}$ is faithful too.
Since $x \in X(k)$, a choice of uniformizer $t$ at $x$
gives a $k$-isomorphism $\widehat{\OO}_{X,x} \iso k[[t]]$.
Thus we obtain an embedding $\rho_{X,x,\phi} \colon G \injects \Aut(k[[t]])$.
Changing the isomorphism $\widehat{\OO}_{X,x} \isom k[[t]]$
conjugates $\rho_{X,x,\phi}$ by an element of $\Aut(k[[t]])$,
so we obtain a map
\begin{align}
\label{E:take action at infinity}
	\{\textup{pointed $G$-curves}\} &\to \{\textup{conjugacy classes of embeddings $G \injects \Aut(k[[t]])$}\} \\
\nonumber	(X,x,\phi) &\mto [\rho_{X,x,\phi}].
\end{align}
Also, $G$ is the inertia group of $X \to X/G$ at $x$.

\begin{lemma}
\label{L:G is cyclic mod p}
If $(X,x,\phi)$ is a pointed $G$-curve, then $G$ is cyclic mod~$p$.
\end{lemma}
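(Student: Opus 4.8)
The plan is to transport the problem from the curve $X$ to the Nottingham-group setting, where the finite subgroups have already been understood. Since $G$ fixes the rational point $x$, it acts on the complete local ring $\widehat{\OO}_{X,x}$, and the discussion preceding the lemma already shows this action is faithful (using $\OO_{X,x} \subseteq \widehat{\OO}_{X,x}$ and $\Frac(\OO_{X,x}) = k(X)$). Choosing a uniformizer $t$ therefore produces an embedding $\rho_{X,x,\phi} \colon G \injects \Aut(k[[t]])$. So it suffices to know that every finite subgroup of $\Aut(k[[t]])$ is cyclic mod~$p$, which is exactly the conclusion recorded in the subsection on the Nottingham group; I would simply invoke it.

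For completeness I would recall the short argument behind that fact, applied to $H \colonequals \rho_{X,x,\phi}(G)$. Using $\Aut(k[[t]]) = \mathcal{N}(k) \rtimes k^\times$, set $P \colonequals H \intersect \mathcal{N}(k)$, the kernel of the restriction to $H$ of the homomorphism $\Aut(k[[t]]) \to k^\times$, $\sigma \mapsto a_1$; this $P$ is normal in~$H$. Because $\mathcal{N}(k)$ is pro-solvable with a filtration whose successive quotients are isomorphic to $(k,+)$, every finite subgroup of $\mathcal{N}(k)$—in particular $P$—is a $p$-group. Meanwhile $H/P$ embeds into $k^\times$, the multiplicative group of a field, so it is cyclic, and its order is prime to~$p$ since a field of characteristic $p$ has no nontrivial $p$-th roots of unity. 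Thus $H = P \rtimes (H/P)$ with $P$ a $p$-group and $H/P$ a cyclic $p'$-group, which is precisely the assertion that $H$, and hence $G$, is cyclic mod~$p$.

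The argument has essentially no obstacle: all the real content lies in the structural facts about $\mathcal{N}(k)$ and $k^\times$ recalled earlier, and the only thing that must be verified in the present situation is the faithfulness of the induced $G$-action on $\widehat{\OO}_{X,x}$, which is already secured. The one point worth flagging is that the whole chain is valid over an arbitrary perfect $k$ of characteristic $p$, since neither the $p$-torsion of $(k,+)$ nor the cyclicity and $p'$-order of finite subgroups of $k^\times$ requires $k$ to be algebraically closed.
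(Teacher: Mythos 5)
Your proposal is correct and follows the paper's own route exactly: the paper's proof is the one-line observation that $G$ embeds as a finite subgroup of $\Aut(k[[t]])$, with the fact that such subgroups are cyclic mod~$p$ already recorded in the Nottingham-group subsection. The extra details you supply (the normal $p$-subgroup $H \intersect \mathcal{N}(k)$ and the cyclic $p'$-quotient inside $k^\times$) are precisely the justification the paper gives there, so there is no substantive difference.
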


\begin{proof}
The group $G$ is embedded as a finite subgroup of $\Aut(k[[t]])$.
\end{proof}

\subsection{Harbater--Katz--Gabber $G$-curves}
\label{S:HKG G-curves}

\begin{definition}
\label{dfn:katzG} 
A pointed $G$-curve $(X,x,\phi)$ over $k$ is called a 
\defi{Harbater--Katz--Gabber $G$-curve (HKG $G$-curve)} 
if both of the following conditions hold:
\begin{enumerate}[\upshape (i)]
\item\label{I:quotient is genus 0} 
The quotient $X/G$ is of genus $0$.
(This is equivalent to $X/G \isom \PP^1_k$,
since $x$ maps to a $k$-point of $X/G$.)
\item\label{I:almost unramified}
The action of $G$ on $X-\{x\}$ is either unramified everywhere,
or tamely and nontrivially ramified at one $G$-orbit in $X(\kbar)-\{x\}$ 
and unramified everywhere else. 
\end{enumerate} 
\end{definition}

\begin{remark}
\label{R:katz}
Katz in~\cite[Main Theorem~1.4.1]{katz} 
focused on the \emph{base curve} $X/G$ as starting curve.
He fixed an isomorphism of $X/G$ with $\PP^1_k$ identifying
the image of $x$ with $\infty$ and 
the image of a tamely and nontrivially ramified point of $X(\kbar) - \{x\}$
(if such exists) with $0$.
He then considered Galois covers $X \to X/G = \PP^1_k$ 
satisfying properties as above;
these were called Katz--Gabber covers in~\cite{chingurhar}.
For our applications, however, 
it is more natural to focus on the upper curve $X$.
\end{remark}

HKG curves have some good functoriality properties 
that follow directly from the definition:
\begin{itemize} 
\item
\emph{Base change:}
Let $X$ be a curve over $k$, let $x \in X(k)$,
and let $\phi \colon G \to \Aut(X)$ be a homomorphism.
Let $k' \supseteq k$ be a field extension.
Then $(X,x,\phi)$ is an HKG $G$-curve over $k$ 
if and only if its base change to $k'$ is an HKG $G$-curve over $k'$.
\item
\emph{Quotient:} 
If $(X,x,\phi)$ is an HKG $G$-curve,
and $H$ is a normal subgroup of $G$,
then $X/H$ equipped with the image of $x$ and the induced $G/H$-action
is an HKG $G/H$-curve.
\end{itemize}

\begin{example}
\label{Ex:p-curve}
Let $P$ be a finite subgroup of the additive group of $k$,
so $P$ is an elementary abelian $p$-group.
Then the addition action of $P$ on $\Aff^1_k$
extends to an action $\phi \colon P \to \Aut(\PP^1_k)$
totally ramified at $\infty$ and unramified elsewhere,
so $(\PP^1_k,\infty,\phi)$ is an HKG $P$-curve.
\end{example}

\begin{example}
\label{Ex:p'-curve}
Suppose that $C$ is a $p'$-group and that $(X,x,\phi)$ is an HKG $C$-curve.  
By Lemma~\ref{L:G is cyclic mod p}, $C$ is cyclic. 
By the Hurwitz formula,
$X$ must have genus $0$ since there are at most
two $C$-orbits of ramified points and all the ramification is tame.  
Moreover, $X$ has a $k$-point (namely, $x$), so $X \isom \PP^1_k$, 
and $C$ is a $p'$-subgroup of the stabilizer of $x$ inside
$\Aut(X) \isom \Aut(\PP^1_k) \isom \PGL_2(k)$.  It follows
that after applying an automorphism of $X = \PP^1_k$, we can assume that $C$
fixes the points $0$ and $\infty$ and corresponds to the multiplication action 
of a finite subgroup of $k^\times$ on $\Aff^1_k$.  Conversely, such an action 
gives rise to an HKG $C$-curve $(\PP^1_k,\infty,\phi)$.
\end{example}

The following gives alternative criteria for testing
whether a pointed $G$-curve is an HKG $G$-curve.

\begin{proposition}
\label{P:G-curve vs. P-curve}
Let $(X,x,\phi)$ be a pointed $G$-curve.
Let $P$ be the Sylow $p$-subgroup of $G$.
Then the following are equivalent:
\begin{enumerate}[\upshape (i)]
\item\label{I:G-curve} $(X,x,\phi)$ is an HKG $G$-curve.
\item\label{I:P-curve}  $(X,x,\phi|_P)$ is an HKG $P$-curve.
\item\label{I:strong P-curve} The quotient $X/P$ is of genus $0$,
and the action of $P$ on $X-\{x\}$ is unramified.
\item\label{I:genus of HKG curve}
Equality holds in the inequality $g_X \ge 1 -|P| + \mathfrak{d}_x(P)/2$.
\end{enumerate}
\end{proposition}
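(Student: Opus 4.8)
The plan is to use the decomposition $G = P \rtimes C$ supplied by Lemma~\ref{L:G is cyclic mod p}, where $P$ is the unique (hence normal) Sylow $p$-subgroup and $C \isom G/P$ is a cyclic $p'$-group, and to deduce the equivalence of the four conditions from the chain (ii)~$\Leftrightarrow$~(iii)~$\Leftrightarrow$~(iv) together with (i)~$\Leftrightarrow$~(iii). The two inner equivalences are statements about $P$ alone and will follow from the structure of $p$-groups and one application of the Hurwitz formula to $X \to X/P$; the equivalence of (i) with (iii) is the real content, and I would prove it by passing through the intermediate quotient $X \to X/P \to X/G$, on which the residual group $C = G/P$ acts with $(X/P)/C = X/G$.

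First I would dispose of (ii)~$\Leftrightarrow$~(iii). Because $P$ is a $p$-group, every nontrivial inertia group of the $P$-action is a nontrivial $p$-group, hence \emph{wild}; so the ``tamely and nontrivially ramified'' alternative in the definition of an HKG $P$-curve cannot occur. Thus $(X,x,\phi|_P)$ is an HKG $P$-curve if and only if $X/P$ has genus~$0$ and $P$ is unramified on $X - \{x\}$, which is precisely (iii). For (iii)~$\Leftrightarrow$~(iv) I would apply the Hurwitz formula to $X \to X/P$ over $\kbar$ (Remark~\ref{rem:hurwitz}). Since $G$, and therefore $P \le G$, is totally ramified at $x$, the inertia group of $P$ at $x$ is all of $P$, so $\mathfrak{d}_x(P)$ genuinely appears in the ramification sum. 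Dropping the nonnegative term $2|P| g_{X/P}$ and the nonnegative differents $\mathfrak{d}_s(P)$ for $s \ne x$ gives the inequality $g_X \ge 1 - |P| + \mathfrak{d}_x(P)/2$, and equality holds exactly when $g_{X/P} = 0$ and $P$ is unramified away from $x$, i.e.\ exactly when (iii) holds.

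The heart of the matter is (i)~$\Leftrightarrow$~(iii). For (i)~$\Rightarrow$~(iii): if $(X,x)$ is an HKG $G$-curve then each inertia group $G_s$ with $s \ne x$ is a $p'$-group, whence $P \cap G_s = \{e\}$ and $P$ is unramified on $X - \{x\}$; to see $g_{X/P} = 0$ I would apply the Hurwitz formula to the tame cyclic cover $X/P \to X/G$, using $g_{X/G} = 0$, the fact that the inertia of $C$ at the image $\bar{x}$ of $x$ is the full tame quotient $C$, and the fact that any remaining branch points arise only from the unique tame $G$-orbit; the resulting identity leaves no room for positive genus and forces $g_{X/P} = 0$. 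For (iii)~$\Rightarrow$~(i): since $X/P$ has genus~$0$, so does $X/G = (X/P)/C$; normality of $P$ forces every inertia group off $x$ to be a $p'$-group, so all ramification away from $x$ is tame; and because $X/P \isom \PP^1_k$ (genus~$0$ with the rational point $\bar{x}$) carries a faithful action of the cyclic $p'$-group $C$ fixing $\bar{x}$, over $\kbar$ this action has exactly two fixed points $\bar{x}, \bar{y}$ and is free elsewhere, so $X/P \to X/G$ is branched only above the images of $\bar{x}$ and $\bar{y}$. Since the $\kbar$-fiber of the Galois cover $X \to X/G$ over any point is a single $G$-orbit, there is at most one ramified $G$-orbit off $x$ (the one lying over $\bar{y}$), and it is tame; hence $(X,x)$ is an HKG $G$-curve.

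The step I expect to be the main obstacle is the genus computation $g_{X/P} = 0$ inside (i)~$\Rightarrow$~(iii): it requires correctly identifying the inertia groups of the residual $C$-action on $X/P$ through the relation that the inertia of $X/P \to X/G$ at a point $\bar{z}$ is the image $G_z P / P$ of the inertia group $G_z$ upstairs, and then checking that the Hurwitz formula for $C$ on $X/P$ is compatible with $g_{X/P} \ge 0$ only when $g_{X/P} = 0$. Along the way I would record two standard facts so that all genera and differents may be computed over $\kbar$: that forming the quotient by a finite group commutes with base change to $\kbar$, and that a nontrivial cyclic $p'$-group acting on $\PP^1$ over an algebraically closed field has exactly two, totally ramified, fixed points.
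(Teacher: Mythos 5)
Your proposal is correct and follows essentially the same route as the paper: reduce everything to the normal Sylow $p$-subgroup $P$, use the Hurwitz formula for $X \to X/P$ to get (iii)$\Leftrightarrow$(iv), and analyze the residual cyclic $p'$-action of $C=G/P$ on the genus-$0$ curve $X/P$ (two totally ramified fixed points over $\kbar$, tame inertia $G_yP/P$) to relate these to (i). The only difference is organizational: the paper packages the $C$-analysis via its quotient functoriality property and Example~\ref{Ex:p'-curve} and closes the cycle as (i)$\Rightarrow$(iii)$\Rightarrow$(ii)$\Rightarrow$(i), whereas you inline the same Hurwitz computation to prove (i)$\Leftrightarrow$(iii) directly.
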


\begin{proof}
Let $C=G/P$.

\eqref{I:strong P-curve}$\Rightarrow$\eqref{I:P-curve}: 
Trivial.

\eqref{I:G-curve}$\Rightarrow$\eqref{I:strong P-curve}: 
By the quotient property of HKG curves, $X/P$ is an HKG $C$-curve,
so $X/P \isom \PP^1_k$ by Example~\ref{Ex:p'-curve}.
At each $y \in X(\kbar)-\{x\}$, the ramification index $e_y$ 
for the $P$-action divides $|P|$ but is prime to $p$, so $e_y=1$.
Thus the action of $P$ on $X-\{x\}$ is unramified.

\eqref{I:P-curve}$\Rightarrow$\eqref{I:G-curve}: 
Applying the result \eqref{I:G-curve}$\Rightarrow$\eqref{I:strong P-curve}
to $P$ shows that $X \to X/P$ is unramified outside $x$.
There is a covering $\PP^1_k \isom X/P \to X/G$,
so $X/G \isom \PP^1_k$.
We may assume that $C \ne \{1\}$.
By Example~\ref{Ex:p'-curve}, 
the cover $X/P \to X/G$ is totally tamely ramified above two $k$-points,
and unramified elsewhere.
One of the two points must be the image of $x$;
the other is the image of the unique tamely ramified $G$-orbit in $X(\kbar)$,
since $X \to X/P$ is unramified outside~$x$.

\eqref{I:strong P-curve}$\Leftrightarrow$\eqref{I:genus of HKG curve}:
The Hurwitz formula (see Remark~\ref{rem:hurwitz}) for the action of $P$
simplifies to the inequality in \eqref{I:genus of HKG curve}
if we use $g_{X/P} \ge 0$ and discard ramification in $X-\{x\}$.
Thus \emph{equality} holds in \eqref{I:genus of HKG curve}
if and only if $g_{X/P}=0$ and the action of $P$ on $X - \{x\}$ is unramified.
\end{proof}

\subsection{The Harbater--Katz--Gabber theorem}
\label{s:HKGtheorem}

The following is a consequence of work of Harbater \cite[\S 2]{harbater} when $G$ is a $p$-group and of  
Katz and Gabber \cite[Main Theorem~1.4.1]{katz} when $G$ is arbitrary. 

\begin{theorem}[Harbater, Katz--Gabber]
\label{thm:KatzGstate}
The assignment $(X,x,\phi) \mapsto \rho_{X,x,\phi}$ induces a surjection from  the set of HKG $G$-curves over $k$ up to equivariant isomorphism to the set of conjugacy classes of embeddings of $G$ into $\Aut(k[[t]])$.\end{theorem}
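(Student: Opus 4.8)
The plan is to prove the statement by establishing surjectivity directly: given an embedding $\rho\colon G \injects \Aut(k[[t]])$, I would construct an HKG $G$-curve $(X,x,\phi)$ with $[\rho_{X,x,\phi}] = [\rho]$. The assignment $(X,x,\phi)\mapsto [\rho_{X,x,\phi}]$ is already known to be well defined (changing the uniformizer conjugates $\rho_{X,x,\phi}$), so only surjectivity is at issue. First I would translate $\rho$ into local Galois data. Setting $L \colonequals k((t))$ with the $G$-action given by $\rho$ and $K \colonequals L^G$, the extension $L/K$ is a finite Galois extension of complete discretely valued fields with group $G$. Since $G$ acts by $k$-algebra automorphisms it fixes the residue field $k$, so $L/K$ is totally ramified and $G$ is the inertia group; by Lemma~\ref{L:G is cyclic mod p} we may write $G = P \rtimes C$ with $P = G_1$ the wild inertia (a $p$-group) and $C$ the cyclic tame quotient. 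Fixing an isomorphism $K \isom k((u))$ for a uniformizer $u$, I regard $K$ as the completion of the function field $k(\PP^1_k)$ at $\infty$, with $u$ a local parameter there.

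The heart of the argument is to globalize $L/K$: to produce a connected $G$-Galois cover $\phi\colon X \to \PP^1_k = X/G$ together with a point $x \in X(k)$ over $\infty$ such that the decomposition group at $x$ is all of $G$, the $G$-action on $\widehat{\OO}_{X,x}$ agrees with $\rho$ under a suitable identification $\widehat{\OO}_{X,x} \isom k[[t]]$, and $X \to \PP^1_k$ is unramified over $\Aff^1_k$ apart from at most one tame $G$-orbit. This is exactly what the cited results supply. For the tame quotient I would realize $L^P/K$ by the cyclic Kummer cover $z \mapsto z^{|C|}$ of $\PP^1_k$ (where $u = 1/z$), which is branched only over $0$ and $\infty$ with inertia $C$ at each and has total space $\isom \PP^1_k$. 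For the wild part I would invoke Harbater's formal patching~\cite{harbater} to spread the totally wild $P$-extension $L/L^P$ out to a $P$-cover that is \'etale away from the chosen point over $\infty$, carried out $C$-equivariantly so that the two covers assemble into a single $G$-cover; alternatively, Katz--Gabber's Main Theorem~\cite{katz} handles general $G$ at once. \textbf{The main obstacle is precisely this step}: concentrating all the wild ramification at the single point $x$ while prescribing the local extension there. This is the deep geometric input of \cite{harbater} and \cite{katz}, proved by patching (equivalently, by comparing the tame-at-$0$ fundamental group of $\Gm$ with the local Galois group at $\infty$); everything surrounding it is bookkeeping.

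It then remains to verify that the resulting triple is an HKG $G$-curve lying over $[\rho]$. By construction $X/G \isom \PP^1_k$ has genus $0$, giving condition~\eqref{I:quotient is genus 0}, and the only ramification on $X-\{x\}$ is the single tame orbit over $0$ (none at all when $C$ is trivial, in which case the Kummer cover degenerates), giving condition~\eqref{I:almost unramified}; thus $(X,x,\phi)$ is an HKG $G$-curve. Since $x$ is totally ramified with decomposition group $G$ and $\widehat{\OO}_{X,x}$ carries the $G$-action of $L/K$, we get $\rho_{X,x,\phi} = \rho$ up to the conjugacy forced by the choice of uniformizer, so $[\rho_{X,x,\phi}] = [\rho]$. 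Finally, since the comparison is cleanest after extending scalars, I would, if needed, run the construction over $\kbar$ and descend the cover, the point, and the $G$-action to the perfect field $k$ by Galois descent, using the base-change property of HKG curves recorded above to confirm the descended object is again an HKG $G$-curve (the presence of $x \in X(k)$ guarantees geometric connectedness). This produces a preimage of every conjugacy class and hence the asserted surjection.
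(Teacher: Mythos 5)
The paper offers no proof of this theorem: it is stated as a direct consequence of the cited results of Harbater and of Katz--Gabber, which are taken as black boxes. Your proposal is a correct elaboration of exactly that deduction --- translating $\rho$ into the local extension $L/K$, correctly isolating the one genuinely deep step (globalizing the wild part with all wild ramification concentrated at a single point) as the content of \cite{harbater} and \cite{katz}, and verifying the HKG conditions --- so it takes essentially the same approach as the paper.
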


\begin{corollary}
\label{cor:field}
Any finite subgroup of $\Aut_{\kbar}(\kbar[[t]])$ 
can be conjugated into $\Aut_{k'}(k'[[t]])$ 
for some finite extension $k'$ of $k$ in $\kbar$.
\end{corollary}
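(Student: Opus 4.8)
The plan is to reduce to the Harbater--Katz--Gabber theorem (Theorem~\ref{thm:KatzGstate}) over $\kbar$ and then descend the resulting HKG curve to a finite extension of $k$. Let $G \subseteq \Aut_{\kbar}(\kbar[[t]])$ be a finite subgroup, and write $\iota$ for the inclusion. Applying Theorem~\ref{thm:KatzGstate} over $\kbar$, I obtain an HKG $G$-curve $(X,x,\phi)$ over $\kbar$ whose associated embedding $\rho_{X,x,\phi}$ is conjugate to $\iota$ in $\Aut_{\kbar}(\kbar[[t]])$.

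Next I would descend the whole package $(X,x,\phi)$ to a finite level. Since $k$ is perfect, every finitely generated subextension of $\kbar/k$ is finite, and $\kbar$ is the filtered union of the finite extensions $k'$ of $k$ inside $\kbar$. The curve $X$ is of finite presentation over $\kbar$, the marked point $x$ is a $\kbar$-point, and $\phi$ is determined by the finitely many automorphisms $\phi(g)$, each given by finitely much data over $\kbar$. By the standard spreading-out (limit) formalism one obtains a finite extension $k'/k$ in $\kbar$ and a triple $(X',x',\phi')$ over $k'$ with $(X',x',\phi') \times_{k'} \kbar \iso (X,x,\phi)$ compatibly with all structure. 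Injectivity of $\phi'$ and the property of fixing $x'$ descend from the corresponding facts over $\kbar$ after possibly enlarging $k'$, so $(X',x',\phi')$ is a pointed $G$-curve over $k'$; and since its base change to $\kbar$ is an HKG $G$-curve, the base-change property of HKG curves recorded above shows that $(X',x',\phi')$ is itself an HKG $G$-curve over $k'$.

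Now choose a uniformizer $t$ at $x'$ over $k'$. It yields an isomorphism $\widehat{\OO}_{X',x'} \iso k'[[t]]$ and hence an embedding $\rho' \colonequals \rho_{X',x',\phi'} \colon G \injects \Aut_{k'}(k'[[t]])$, which lands in $\Aut_{k'}(k'[[t]])$ because $G$ acts on $X'$ over $k'$ fixing $x'$. Because the completed local ring of $X$ at $x$ is obtained from that of $X'$ at $x'$ by the base change $k' \to \kbar$, with $t$ remaining a uniformizer, the embedding $\rho_{X,x,\phi}$ computed with this same $t$ is exactly the composite of $\rho'$ with the natural inclusion $\Aut_{k'}(k'[[t]]) \injects \Aut_{\kbar}(\kbar[[t]])$. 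On the other hand, by the well-definedness up to conjugacy recorded in~\eqref{E:take action at infinity}, this $\rho_{X,x,\phi}$ is conjugate in $\Aut_{\kbar}(\kbar[[t]])$ to the one produced in the first step, hence to $\iota$. Combining the two, $\iota$ is conjugate in $\Aut_{\kbar}(\kbar[[t]])$ to $\rho'$, whose image lies in $\Aut_{k'}(k'[[t]])$, which is the desired conclusion.

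The main obstacle is the descent step: one must make precise that a curve equipped with a faithful finite group action and a fixed $k$-point descends from $\kbar$ to a finite subextension. This is routine via the limit formalism for finitely presented schemes and morphisms (e.g.\ EGA~IV, \S8), but care is needed to descend \emph{simultaneously} the curve, the point, the individual automorphisms, the group law relating them, and the injectivity of $\phi$; perfectness of $k$ guarantees that the finitely many algebraic elements of $\kbar$ thereby used generate a \emph{finite} extension $k'/k$.
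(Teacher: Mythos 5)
Your proposal is correct and follows essentially the same route as the paper: the paper's proof is exactly "realize the subgroup by an HKG curve over $\kbar$, then observe that any such curve (with its marked point and $G$-action) is defined over a finite extension $k'$ of $k$." You have simply spelled out the spreading-out step and the compatibility of $\rho_{X,x,\phi}$ with base change in more detail than the paper does.
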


\begin{proof}
The subgroup is realized by some HKG curve over $\kbar$.
Any such curve is defined over some finite extension $k'$ of $k$.
\end{proof}

\begin{corollary}
\label{C:algebraic series}
Any finite subgroup of $\Aut(k[[t]])$ 
can be conjugated into $\Aut_{\alg}(k[[t]])$.
\end{corollary}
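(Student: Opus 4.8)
The plan is to realize the given finite subgroup geometrically via the Harbater--Katz--Gabber theorem and then to observe that working with a \emph{rational} uniformizer forces the resulting power series to be algebraic. Let $G \le \Aut(k[[t]])$ be a finite subgroup. Regarding the inclusion as an embedding $G \injects \Aut(k[[t]])$, its conjugacy class lies in the image of the surjection of Theorem~\ref{thm:KatzGstate}. So I would fix an HKG $G$-curve $(X,x,\phi)$ over $k$ for which $\rho_{X,x,\phi}$ is conjugate in $\Aut(k[[t]])$ to the inclusion of $G$; it then suffices to show that $\rho_{X,x,\phi}(G)$ already lies in $\Aut_{\alg}$, since that image is a conjugate of $G$.

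The crux is to build the isomorphism $\widehat{\OO}_{X,x} \isom k[[t]]$ that defines $\rho_{X,x,\phi}$ out of a uniformizer lying in the function field. Since $x \in X(k)$, the local ring $\OO_{X,x} \subseteq k(X)$ contains a uniformizer $t'$, and such a $t'$ determines a $k$-isomorphism $\widehat{\OO}_{X,x} \isom k[[t']]$ sending $t'$ to the power series variable. Changing the isomorphism $\widehat{\OO}_{X,x} \isom k[[t]]$ only conjugates $\rho_{X,x,\phi}$ by an element of $\Aut(k[[t]])$, so I am free to compute $\rho_{X,x,\phi}$ using this particular choice. With respect to $t'$ it sends each $\sigma \in G$ to the automorphism of $k[[t']]$ determined by $\sigma(t')$. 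Now $G$ acts on the whole function field $k(X)$, and $t' \in k(X)$, so $\sigma(t') \in k(X)$ for every $\sigma \in G$. Because $t'$ is a non-constant rational function it is transcendental over $k$, whence $k(X)$ is a finite extension of $k(t')$; therefore every $\sigma(t')$ is algebraic over $k(t')$. By the definition of $\Aut_{\alg}$ (read with the uniformizer $t'$ in place of $t$), this says exactly that $\rho_{X,x,\phi}(G) \subseteq \Aut_{\alg}(k[[t']])$. Renaming $t'$ as $t$ then gives a conjugate of $G$ inside $\Aut_{\alg}(k[[t]])$, as desired.

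The step that carries the weight is this choice of a rational uniformizer together with the immediate algebraicity it produces: once $\rho_{X,x,\phi}$ is expressed via some $t' \in \OO_{X,x}$, the images $\sigma(t')$ are honest rational functions on $X$, and algebraicity over $k(t')$ is automatic from the fact that $k(X)$ has transcendence degree $1$ over $k$. The only care needed is to confirm that $\Aut_{\alg}$ is defined intrinsically enough that renaming $t'$ to $t$ identifies $\Aut_{\alg}(k[[t']])$ with $\Aut_{\alg}(k[[t]])$ — which is clear, since algebraicity of $\sigma(t')$ over $k(t')$ is literally the same condition after relabeling the variable — and that the set of these images really is a subgroup, as guaranteed by Proposition~\ref{P:algebraic automorphisms}. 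No genuinely hard estimate or construction is involved; the content lies entirely in passing from the abstract embedding to a rational model through Theorem~\ref{thm:KatzGstate}.
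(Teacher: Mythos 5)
Your proposal is correct and follows exactly the paper's argument: realize the subgroup by an HKG curve via Theorem~\ref{thm:KatzGstate}, conjugate so that the uniformizer is a rational function on $X$, and observe that each $\sigma(t)$ is then a rational function on $X$, hence algebraic over $k(t)$ since $k(X)$ is finite over $k(t)$. You have merely spelled out the same steps in greater detail.
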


\begin{proof}
The subgroup is realized by some HKG curve $X$.
By conjugating, we may assume that the uniformizer $t$ 
is a rational function on $X$.
Then each power series $\sigma(t)$ 
represents another rational function on $X$,
so $\sigma(t)$ is algebraic over $k(t)$.
\end{proof}

\section{Almost rational automorphisms}
\label{s:ArA}

\subsection{The field generated by a group of algebraic automorphisms}

Let $G$ be a finite subgroup of $\Aut_{\alg}(k[[t]])$.
Let $L \colonequals k(\{\sigma(t): \sigma \in G\}) \subseteq k((t))$.
Then $L$ is a finite extension of $k(t)$,
so $L \isom k(X)$ for some curve $X$.
The $t$-adic valuation on $k((t))$ restricts to a valuation on $L$
associated to a point $x \in X(k)$.
The $G$-action on $k((t))$ preserves $L$.  
This induces an embedding $\phi \colon G \to \Aut(X)$ 
such that $G$ fixes $x$, 
so $(X,x,\phi)$ is a pointed $G$-curve over $k$.

\begin{theorem}
\label{T:order p^n}
Let $G$ be a finite subgroup of $\Aut_{\alg}(k[[t]])$.
Let $L$ and $(X,x,\phi)$ be as above.
Let $d \colonequals [L:k(t)]$.
\begin{enumerate}[\upshape (a)]
\item \label{I:d-controlled}
We have $g_X \le (d-1)^2$.
\item \label{I:cyclic of order p^n for n at least 1}
If $G$ is cyclic of order $p^n$, 
then $g_X \ge \dfrac{p(p^n-1)(p^{n-1}-1)}{2(p+1)}$.
Moreover, if equality holds, then $(X,x,\phi)$ is an HKG $G$-curve.
\item  \label{I:cyclic of order p^n for n at least 2}
Suppose that $G$ is cyclic of order $p^n$.  Then
\begin{equation}
\label{eq:tautologicalbound}
d \ge 1 + \sqrt{\frac{p(p^n-1)(p^{n-1} - 1)}{2(p+1)}}.
\end{equation}
In particular, if $d \le p$ and $n \ge 2$, 
then $d=p=n=2$ and $(X,x,\phi)$ is an HKG $\Z/4\Z$-curve of genus~$1$.
\end{enumerate}
\end{theorem}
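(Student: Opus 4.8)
The plan is to prove parts~\eqref{I:cyclic of order p^n for n at least 1} and~\eqref{I:d-controlled} first, and then to deduce part~\eqref{I:cyclic of order p^n for n at least 2} together with its corollary in an essentially formal way. Part~\eqref{I:cyclic of order p^n for n at least 1} is the most mechanical: since $G$ is cyclic of order $p^n$ it equals its own Sylow $p$-subgroup, so $P=G$ and $\mathfrak{d}_x(P)=\mathfrak{d}(G)$. Then Proposition~\ref{P:G-curve vs. P-curve}\eqref{I:genus of HKG curve} supplies $g_X \ge 1-p^n+\mathfrak{d}(G)/2$, with equality \emph{exactly} when $(X,x,\phi)$ is an HKG $G$-curve. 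I would substitute the lower bound for $\mathfrak{d}(G)$ from Lemma~\ref{L:lower bound on different} and simplify: after clearing the denominator $2(p+1)$, the numerator $2(p+1)(1-p^n)+(p^{2n}+p^{n+1}+p^n-p-2)$ collapses to $p(p^n-1)(p^{n-1}-1)$, which is exactly $2(p+1)$ times the asserted bound. Because the final inequality is a composite of these two inequalities, equality in it forces equality in Proposition~\ref{P:G-curve vs. P-curve}\eqref{I:genus of HKG curve}, giving the HKG conclusion. Only the elementary algebraic identity needs checking.

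For part~\eqref{I:d-controlled} I would first pass to $\kbar$, which is harmless since $X$ is geometrically integral, so $d=[\kbar(X):\kbar(t)]$ is unchanged and $g_X$ is the geometric genus. The function $t$ exhibits $X$ as a degree-$d$ cover of $\PP^1_k$, and the natural tool is the Castelnuovo--Severi inequality applied to $L=k(t)\cdot k(u)$ for a suitable $u$: if $k(u)$ is a rational subfield of index $n_2=[L:k(u)]$ whose compositum with $k(t)$ is all of $L$, then $g_X \le (d-1)(n_2-1)$. The automorphisms in $G$ supply exactly such pencils, because for each $\sigma\in G$ the field $k(\sigma(t))$ is rational and $\sigma$ carries $k(t)$ isomorphically to $k(\sigma(t))$, whence $[L:k(\sigma(t))]=[L:k(t)]=d$. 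Taking $u=\sigma(t)$ would then yield the clean bound $(d-1)^2$. The main obstacle --- and really the crux of the whole theorem --- is to guarantee that $t$ together with a single conjugate $\sigma(t)$ already generates $L$, equivalently that the product map $(t,\sigma(t))\colon X \to \PP^1_k\times\PP^1_k$ is birational onto its image, a curve of bidegree $(d,d)$ of arithmetic genus $(d-1)^2$. I would establish this by reducing generation of $L$ by the full orbit of $t$ to generation by two of its members, using that all conjugate pencils have the same degree $d$.

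Finally, part~\eqref{I:cyclic of order p^n for n at least 2} follows immediately: combining $g_X\le (d-1)^2$ from~\eqref{I:d-controlled} with the lower bound in~\eqref{I:cyclic of order p^n for n at least 1} gives $(d-1)^2 \ge \frac{p(p^n-1)(p^{n-1}-1)}{2(p+1)}$, which rearranges to~\eqref{eq:tautologicalbound}. For the concluding assertion, assume $d\le p$ and $n\ge 2$, so that $(p-1)^2 \ge (d-1)^2 \ge \frac{p(p^n-1)(p^{n-1}-1)}{2(p+1)}$. When $n=2$ the right-hand side equals $\frac{p(p-1)^2}{2}$, so the inequality $(p-1)^2 \ge \frac{p(p-1)^2}{2}$ forces $p\le 2$, hence $p=2$; and for $n\ge 3$ the right-hand side is strictly larger, so the inequality fails even for $p=2$. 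Thus $p=n=2$. Then $d\le 2$ and $(d-1)^2\ge 1$ force $d=2$, whence $g_X=1$; equality now holds in the bound of~\eqref{I:cyclic of order p^n for n at least 1}, so $(X,x,\phi)$ is an HKG $\Z/4\Z$-curve of genus~$1$.
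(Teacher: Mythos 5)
Your parts~\eqref{I:cyclic of order p^n for n at least 1} and~\eqref{I:cyclic of order p^n for n at least 2} are correct and follow the paper's route exactly: combine Proposition~\ref{P:G-curve vs. P-curve}\eqref{I:genus of HKG curve} with Lemma~\ref{L:lower bound on different}, and note that equality in the composite inequality forces equality in Proposition~\ref{P:G-curve vs. P-curve}\eqref{I:genus of HKG curve}; your algebraic simplification and the case analysis $p \ge d \ge 1+(p-1)\sqrt{p/2} \ge p$ both check out.

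The gap is in part~\eqref{I:d-controlled}. You correctly identify that a single application of Castelnuovo--Severi to $k(t)$ and $k(\sigma(t))$ gives $(d-1)^2$ \emph{only if} $L = k(t,\sigma(t))$ for some single $\sigma \in G$, and you then say you would "reduce generation of $L$ by the full orbit of $t$ to generation by two of its members, using that all conjugate pencils have the same degree $d$." No argument is given for this reduction, and the fact that $[L:k(\sigma(t))]=d$ for every $\sigma$ does not imply that two members of the orbit generate $L$; a priori $L$ may require many generators (e.g.\ when $G$ is far from cyclic, such as an elementary abelian $p$-group, there is no reason for $k(t,\sigma(t))$ to exhaust $L$). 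The paper sidesteps this entirely: it invokes the notion of a \emph{$d$-controlled} subfield from \cite[\S2]{Poonen-gonality} and the inductive statement \cite[Corollary~2.2]{Poonen-gonality}, which adjoins the generators $\sigma(t)$ one at a time, applying Castelnuovo--Severi at each step with bookkeeping that keeps the genus bound at $(e-1)^2$ for a suitable $e$ dividing into $d$; at the end one gets $g_L \le (d-1)^2$ with no need for any two-generator claim. To repair your argument you would either have to prove the two-generator statement (which I do not believe is available in this generality) or carry out an induction of the Poonen type over all the generators $\sigma(t)$, $\sigma \in G$.
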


\begin{proof}\hfill
\begin{enumerate}[\upshape (a)]
\item 
In \cite[\S2]{Poonen-gonality},
a subfield $F \subseteq L$ is called \defi{$d$-controlled} 
if there exists $e \in \Z_{>0}$ 
such that $[L:F] \le d/e$ and $g_F \le (e-1)^2$.
In our setting, the $G$-action on $k((t))$ preserves $L$,
so $[L:k(\sigma(t))]=d$ for every $\sigma \in G$.
By \cite[Corollary~2.2]{Poonen-gonality}, 
$L \subseteq L$ is $d$-controlled.
Here $d/e=1$, so $g_L \le (e-1)^2 = (d-1)^2$.
\item 
In the inequality $g_X \ge 1 - |G| + \mathfrak{d}_x(G)/2$
of Proposition~\ref{P:G-curve vs. P-curve}\eqref{I:genus of HKG curve},
substitute $|G|=p^n$ and the bound of Lemma~\ref{L:lower bound on different}.
If equality holds, then 
Proposition~\ref{P:G-curve vs. P-curve}\eqref{I:genus of HKG curve}$\Rightarrow$\eqref{I:G-curve} 
shows that $(X,x,\phi)$ is an HKG $G$-curve.
\item 
Combine the upper and lower bounds on $g_X$ in \eqref{I:d-controlled} 
and~\eqref{I:cyclic of order p^n for n at least 1}.  
If $d \le p$ and $n \ge 2$, then 
\[
	p \ge d \ge 1 + \sqrt{\frac{p(p^2-1)(p-1)}{2(p+1)}} = 1 + (p-1) \sqrt{\frac{p}{2}} \ge 1+(p-1) = p,
\]
so equality holds everywhere.
In particular, $p=d$, $n=2$, and $p/2=1$, 
so $d=p=n=2$.
Also, \eqref{I:cyclic of order p^n for n at least 1}
shows that $(X,x,\phi)$ is an HKG $G$-curve,
and $g_X=(d-1)^2=1$.
\qedhere
\end{enumerate}
\end{proof}

\begin{remark} 
Part~\eqref{I:cyclic of order p^n for n at least 2} of Theorem~\ref{T:order p^n} 
implies the first statement in Theorem~\ref{T:almost rational}, 
namely that if $\sigma$ is an almost rational
automorphism of order $p^n > p$, then $p = n = 2$. 
To complete the proof of Theorem~\ref{T:almost rational} we will classify in 
 Section~\ref{S:almost rational of order 4} the $\sigma$ when $p = n = 2$. 
\end{remark}

\subsection{Almost rational automorphisms of order $4$}
\label{S:almost rational of order 4}

In this section, $k$ is a perfect field of characteristic~$2$,
and $G=\Z/4\Z$.

\begin{definition}
\label{D:E_ab}
For $a,b \in k$, 
let $E_{a,b}$ be the projective closure of
\[
	z^2 - z = w^3 + (b^2+b+1) w^2 + a.
\]
Let $O \in E_{a,b}(k)$ be the point at infinity,
and let $\phi \colon \Z/4\Z \to \Aut(E_{a,b})$
send $1$ to the order~$4$ automorphism
\[
	\sigma \colon (w,z) \longmapsto (w+1,z+w+b).
\]
\end{definition}

\begin{proposition}
\label{P:E_ab}
Each $(E_{a,b},O,\phi)$ in Definition~\ref{D:E_ab} 
is an HKG $\Z/4\Z$-curve over $k$.
\end{proposition}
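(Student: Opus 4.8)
The plan is to verify the hypotheses of Proposition~\ref{P:G-curve vs. P-curve}, exploiting that $G = \Z/4\Z$ is itself a $2$-group, so its Sylow $2$-subgroup is $P = G$. First I would confirm that $\sigma$ genuinely defines an automorphism of $E_{a,b}$ of order~$4$, so that $\phi$ is a well-defined injection and $(E_{a,b},O,\phi)$ is a pointed $G$-curve. This is a direct substitution: applying $(w,z) \mapsto (w+1,z+w+b)$ to both sides of the defining equation and reducing in characteristic~$2$ shows the equation is preserved. Computing $\sigma^2$ gives $(w,z)\mapsto(w,z+1)$, whence $\sigma^2\neq\mathrm{id}$ and $\sigma^4=\mathrm{id}$, so $\sigma$ has order~$4$. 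Since $\sigma$ fixes $w=\infty$ on the base line, it fixes the unique point $O$ above it, so $G$ fixes the $k$-rational point $O$.

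Next I would check criterion \eqref{I:strong P-curve} of Proposition~\ref{P:G-curve vs. P-curve}: that $E_{a,b}/G$ has genus~$0$ and that $G$ acts freely on $E_{a,b}-\{O\}$. The key structural observation is that the unique order-$2$ subgroup $\langle\sigma^2\rangle = \{(w,z)\mapsto(w,z),\ (w,z)\mapsto(w,z+1)\}$ is exactly the Galois group of the Artin--Schreier cover $E_{a,b}\to\PP^1_w$, so $E_{a,b}/\langle\sigma^2\rangle \isom \PP^1_w$; the residual $G/\langle\sigma^2\rangle$ then acts on $\PP^1_w$ through $w\mapsto w+1$, whose quotient is again $\PP^1$. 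Hence $E_{a,b}/G \isom \PP^1$ has genus~$0$.

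For freeness, I would argue that since $f(w) = w^3+(b^2+b+1)w^2+a$ has degree~$3$, which is prime to $p=2$, the cover $E_{a,b}\to\PP^1_w$ is ramified only over $w=\infty$ and is totally ramified there; thus the Artin--Schreier involution $\sigma^2$ fixes $O$ and no other point. Now if some $y\neq O$ had nontrivial stabilizer $G_y\le\Z/4\Z$, then $G_y$ would contain the unique subgroup of order~$2$, namely $\langle\sigma^2\rangle$, forcing $\sigma^2(y)=y$ and hence $y=O$, a contradiction. Therefore $G$ acts freely on $E_{a,b}-\{O\}$, and Proposition~\ref{P:G-curve vs. P-curve} yields that $(E_{a,b},O,\phi)$ is an HKG $\Z/4\Z$-curve.

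The main obstacle is the ramification bookkeeping of the last step: pinning down that the Artin--Schreier cover ramifies at $O$ alone and transferring this to freeness of the full cyclic action via the uniqueness of the order-$2$ subgroup. As a consistency check one may instead verify criterion \eqref{I:genus of HKG curve} by computing through the Hurwitz formula that $g_{E_{a,b}}=1$ and $\mathfrak{d}_O(G)=8$, so that equality holds in $g_X \ge 1-|P|+\mathfrak{d}_x(P)/2$; this also matches the genus~$1$ prediction of Theorem~\ref{T:order p^n}\eqref{I:cyclic of order p^n for n at least 2}.
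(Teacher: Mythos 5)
Your proposal is correct and follows essentially the same route as the paper: verify that $\sigma$ is an order-$4$ automorphism fixing $O$, observe that $\sigma^2 \colon (w,z) \mapsto (w,z+1)$ fixes only $O$ so that every nontrivial stabilizer (which must contain $\langle\sigma^2\rangle$) is trivial away from $O$, and conclude that the quotient has genus $0$. The only cosmetic difference is that you obtain $g_{E_{a,b}/G}=0$ by exhibiting the explicit tower $E_{a,b}\to\PP^1_w\to\PP^1$, whereas the paper deduces it from the fact that a ramified quotient of the genus-$1$ curve $E_{a,b}$ must have genus $0$.
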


\begin{proof}
The automorphism $\sigma$ fixes $O$.
Also, $\sigma^2$ maps $(w,z)$ to $(w,z+1)$,
so $\sigma^2$ fixes only $O$; hence the $G$-action on $E_{a,b} - \{O\}$
is unramified.
Since $E_{a,b} \to E_{a,b}/G$ is ramified, the genus of $E_{a,b}/G$ is $0$.
\end{proof}

\begin{proposition}
\label{P:Z/4Z curves}
Let $k$ be a perfect field of characteristic~$2$.
Let $G=\Z/4\Z$.
For an HKG $G$-curve $(X,x,\phi')$ over $k$, the following are equivalent:
\begin{enumerate}[\upshape (i)]
\item\label{I:Z/4Z curves genus 1}
The genus of $X$ is $1$.
\item\label{I:Z/4Z curves G_i}
The lower ramification groups for $X \to X/G$ at $x$
satisfy $|G_0|=|G_1|=4$, $|G_2|=|G_3|=2$, and $|G_i|=1$ for $i \ge 4$.
\item\label{I:Z/4Z curves G_4}
The ramification group $G_4$ equals $\{1\}$.
\item\label{I:Z/4Z curves E_ab}
There exist $a,b \in k$ such that 
$(X,x,\phi')$ is isomorphic to the HKG $G$-curve $(E_{a,b},O,\phi)$
of Definition~\ref{D:E_ab}.
\end{enumerate}
\end{proposition}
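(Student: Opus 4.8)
The plan is to prove the chain (i)$\Leftrightarrow$(ii)$\Leftrightarrow$(iii) by pure ramification theory, to dispose of (iv)$\Rightarrow$(i) by a one-line genus computation, and to concentrate the real work on (i)$\Rightarrow$(iv), where the curve is reconstructed explicitly. For the first chain, $G=\Z/4\Z$ is a $2$-group, so $P=G$, and Proposition~\ref{P:G-curve vs. P-curve}\eqref{I:genus of HKG curve} gives $g_X = 1-4+\mathfrak{d}_x(G)/2$; thus $g_X=1$ if and only if $\mathfrak{d}(G)=8$. Applying the break theory of Section~\ref{S:breaks} with $n=2$, there are exactly two breaks $b_0=i_0$ and $b_1=i_0+2i_1$ with $i_0,i_1\ge 1$, and formula~\eqref{E:different formula} becomes $\mathfrak{d}(G)=3(i_0+1)+2i_1\ge 8$, with equality precisely when $i_0=i_1=1$. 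In that case the filtration reads $|G_0|=|G_1|=4$, $|G_2|=|G_3|=2$, $G_4=\{1\}$, which is exactly (ii); so (i)$\Leftrightarrow$(ii), and (ii)$\Rightarrow$(iii) is immediate. For (iii)$\Rightarrow$(ii), the condition $G_4=\{1\}$ says $b_1\le 3$, while always $b_1=i_0+2i_1\ge 3$; hence $b_1=3$ and $i_0=i_1=1$, recovering (ii).

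For (iv)$\Rightarrow$(i): the curve $E_{a,b}$ is the Artin--Schreier cover $z^2-z=f(w)$ with $\deg_w f=3$, ramified only over $w=\infty$, so the Artin--Schreier genus formula gives $g=(2-1)(3-1)/2=1$ (alternatively, combine Proposition~\ref{P:E_ab} with the equivalence (i)$\Leftrightarrow$(ii) just proved). The heart of the matter is (i)$\Rightarrow$(iv). Write $\sigma=\phi'(1)$ and let $H=\langle\sigma^2\rangle$ be the unique subgroup of order~$2$. Since the $G$-action is free off~$x$, the double cover $X\to X/H$ is ramified only over the image of~$x$; restricting the filtration of (ii) gives $H_0=H_1=H_2=H_3=H$ and $H_i=\{1\}$ for $i\ge 4$, so $\mathfrak{d}_x(H)=4$. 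The Hurwitz formula (Section~\ref{sec:hurwitz}) then yields $g_{X/H}=0$, whence $X/H\cong\PP^1_k$. I choose a coordinate $w$ so that the image of $x$ is $\infty$ and the induced involution $\bar\sigma$ on $X/H$ is $w\mapsto w+1$ (an involution of $\PP^1$ in characteristic~$2$ with a single fixed point is a translation, normalized by scaling). Expressing $X\to X/H$ as $z^2-z=f(w)$, the facts that it is ramified only over $\infty$ and that $\mathfrak{d}_x(H)=4=(\deg f+1)$ force $f$ to be a polynomial of degree~$3$.

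It remains to pin down $f$ and $\sigma$. Since $\sigma$ lies over $\bar\sigma$, it has the form $\sigma\colon(w,z)\mapsto(w+1,\,z+g(w))$ for a polynomial~$g$. That $\sigma$ preserves the curve gives $f(w+1)-f(w)=\wp(g)$, and that $\sigma^2$ is the deck involution $z\mapsto z+1$ gives $g(w)+g(w+1)=1$. The left side $f(w+1)-f(w)$ has degree~$2$, so $\wp(g)=g^2-g$ has degree~$2$, forcing $\deg g=1$; then $g(w)+g(w+1)=1$ gives $g=w+b$ for some $b\in k$, and matching degree-$2$ terms forces the leading coefficient of $f$ to be~$1$. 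Writing $f=w^3+c_2w^2+c_1w+c_0$, the identity $f(w+1)-f(w)=\wp(w+b)$ produces the single relation $c_1+c_2=b^2+b+1$. A translation $w\mapsto w+d$ with $d=\sqrt{b^2+b+1+c_2}$ (available since $k$ is perfect) clears the coefficient of $w$; this substitution simultaneously updates $b$, and one checks it lands the pair $(f,\sigma)$ in the normal form $z^2-z=w^3+(b^2+b+1)w^2+a$ with $\sigma\colon(w,z)\mapsto(w+1,z+w+b)$, i.e.\ exactly $(E_{a,b},O,\phi)$ of Definition~\ref{D:E_ab}.

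I expect the only delicate point to be this final normalization: organizing the coordinate changes so that $f$ lands \emph{precisely} in the stated form while correctly tracking how the parameter $b$ in $\sigma$ transforms under $w\mapsto w+d$. The key consistency check is that setting the coefficient of $w$ to zero is compatible with the relation $c_1+c_2=b^2+b+1$, and this holds exactly because the same translation updates $b$. Everything else in the argument is rigidly determined by the ramification data computed in the first two paragraphs, so no further choices enter.
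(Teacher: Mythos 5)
Your proposal is correct. The equivalences (i)$\Leftrightarrow$(ii)$\Leftrightarrow$(iii) and the implication (iv)$\Rightarrow$(i) are proved essentially as in the paper (the paper runs the Hurwitz formula directly and uses the congruence $b_0\equiv b_1\pmod 2$ where you use the parametrization $\mathfrak d(G)=3(i_0+1)+2i_1$ with $i_0,i_1\ge 1$; these are the same computation in different clothing). The genuine divergence is in (i)$\Rightarrow$(iv). The paper treats $(X,x)$ as an elliptic curve, invokes Silverman to conclude $j=1728=0$, takes a Weierstrass equation $y^2+a_3y=x^3+a_4x+a_6$, notes that $\sigma^*$ acts trivially on $H^0(X,\Omega^1)$ because $u^4=1$ forces $u=1$, and then normalizes the automorphism $(x,y)\mapsto(x+r,y+sx+t)$ using the standard description of such maps. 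You instead bypass elliptic-curve theory entirely: you pass to the quotient by $H=\langle\sigma^2\rangle$, read off $\mathfrak d_x(H)=4$ from the filtration in (ii), deduce $X/H\cong\PP^1_k$ by Hurwitz, normalize the induced involution to $w\mapsto w+1$, and reconstruct $X$ as the Artin--Schreier cover $z^2-z=f(w)$ with $\deg f=3$ forced by the conductor; the shape of $\sigma$ and the relation $c_1+c_2=b^2+b+1$ then fall out of the cocycle conditions, and the final translation $w\mapsto w+d$ with $d=\sqrt{c_1}$ lands in the normal form (your consistency check $c_2+d=(b+d)^2+(b+d)+1$ is exactly what is needed, and it holds). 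Your route is more self-contained and makes the role of the ramification data explicit; the paper's is shorter because it imports the classification of automorphisms of supersingular elliptic curves. One small point worth making explicit in your write-up: the lift $\sigma(z)=z+g(w)$ with $g\in k(w)$ (rather than $g\in z+k(w)$) needs the one-line argument via Lemma~\ref{L:little Artin-Schreier} that the other case would force $\sigma(z)\in k(w)$, which is absurd; and $g$ is then a polynomial because it is integral over $k[w]$. Neither issue affects correctness.
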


\begin{proof}
Let $g$ be the genus of $X$.
Since $G$ is a $2$-group, $|G_0|=|G_1|=4$.

\eqref{I:Z/4Z curves G_i}$\Rightarrow$\eqref{I:Z/4Z curves genus 1}:
This follows from the Hurwitz formula (see Remark~\ref{rem:hurwitz})
\[
	2g-2 = 4(-2) + \sum_{i \ge 0} (|G_i|-1).
\]

\eqref{I:Z/4Z curves genus 1}$\Rightarrow$\eqref{I:Z/4Z curves G_i}:
If $g=1$, then the Hurwitz formula 
yields $0 = -8 + 3 + 3 + \sum_{i \ge 2} (|G_i|-1)$.
Since the $|G_i|$ form a decreasing sequence of powers of $2$
and include all the numbers $4$, $2$, and $1$ (see Section~\ref{S:breaks}),
the only possibility is as in~\eqref{I:Z/4Z curves G_i}.

\eqref{I:Z/4Z curves G_i}$\Rightarrow$\eqref{I:Z/4Z curves G_4}: Trivial.

\eqref{I:Z/4Z curves G_4}$\Rightarrow$\eqref{I:Z/4Z curves G_i}: 
The lower breaks (see Section~\ref{S:breaks}) satisfy $1 \leq b_0 < b_1 < 4$. 
Since $b_0 \equiv b_1 \pmod{2}$, \eqref{I:Z/4Z curves G_i} follows.

\eqref{I:Z/4Z curves E_ab}$\Rightarrow$\eqref{I:Z/4Z curves genus 1}:
The formulas in \cite[III.\S 1]{silverman} 
show that $E_{a,b}$ is an elliptic curve, hence of genus~$1$.

\eqref{I:Z/4Z curves genus 1}$\Rightarrow$\eqref{I:Z/4Z curves E_ab}:
By \cite[A.1.2(c)]{silverman},
an elliptic curve with an order~$4$ automorphism 
has $j$-invariant $1728=0 \in k$.
By \cite[A.1.1(c)]{silverman},
it has an equation $y^2 + a_3 y = x^3 + a_4 x + a_6$.
Substituting $y \mapsto y + a_3^{-1} a_4 x$ leads to an alternative form
$y^2 + a_3 y = x^3 + a_2 x^2 + a$.
Let $u \in k^\times$ be such that $\sigma^*$ acts on $H^0(X,\Omega^1)$
by multiplication by $u^{-1}$.
Then $u^4=1$, so $u=1$.
By \cite[p.~49]{silverman},
$\sigma$ has the form $(x,y) \mapsto (x+r,y+sx+t)$
for some $r,s,t \in k$.
Since $\sigma^2 \ne 1$, we have $s \ne 0$.
Conjugating by a change of variable $(x,y) \mapsto (\epsilon^2 x,\epsilon^3 y)$ 
lets us assume that $s=1$.
The condition that $(x,y) \mapsto (x+r,y+x+t)$
preserves $y^2 + a_3 y = x^3 + a_2 x^2 + a$
implies that $a_3=r=1$ and $a_2 = t^2+t+1$.
Rename $t,x,y$ as $b,w,z$.
\end{proof}

\begin{corollary}
The HKG $\Z/4\Z$-curves that are minimally ramified
in the sense of having the smallest value of $\inf\{i: G_i = \{1\}\}$
are those satisfying the equivalent conditions 
in Proposition~\ref{P:Z/4Z curves}.
\end{corollary}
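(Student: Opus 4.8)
The plan is to reduce the statement to the behavior of the lower ramification filtration, which for a cyclic group of order $p^n=4$ is governed by just two break numbers. Recall from Section~\ref{S:breaks} that for an HKG $\Z/4\Z$-curve $(X,x,\phi)$ there are exactly two breaks $b_0 < b_1$ in the lower numbering of the ramification groups of $G$ at $x$; that $G_i=G$ for $i \le b_0$, that $|G_i|=2$ for $b_0 < i \le b_1$, and that $G_i=\{1\}$ for $i > b_1$. Consequently the invariant being minimized is
\[
\inf\{i : G_i = \{1\}\} = b_1 + 1,
\]
so minimizing the ramification amounts to minimizing the top break $b_1$.

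Next I would bound $b_1$ from below using the arithmetic of the breaks. By Section~\ref{S:breaks} the two breaks are congruent modulo $p=2$, and local class field theory gives $p \nmid b^{(0)} = b_0$; since $p=2$ this forces $b_0$, and hence also $b_1$, to be odd. As $b_0 \ge 1$ and $b_0 < b_1$ are both odd, we obtain $b_0 \ge 1$ and $b_1 \ge 3$, whence
\[
\inf\{i : G_i = \{1\}\} = b_1 + 1 \ge 4,
\]
with equality precisely when $b_1 = 3$ (and then necessarily $b_0=1$).

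Finally I would match the equality case to Proposition~\ref{P:Z/4Z curves}. Since $b_1$ is odd with $b_1 > b_0 \ge 1$, the condition $b_1 = 3$ is equivalent to $b_1 < 4$, i.e.\ to $G_4 = \{1\}$, which is exactly condition~\eqref{I:Z/4Z curves G_4}; by the equivalences of Proposition~\ref{P:Z/4Z curves} this is in turn equivalent to each of \eqref{I:Z/4Z curves genus 1}, \eqref{I:Z/4Z curves G_i}, and~\eqref{I:Z/4Z curves E_ab}. That the lower bound~$4$ is actually attained follows from Proposition~\ref{P:E_ab} together with \eqref{I:Z/4Z curves E_ab}$\Rightarrow$\eqref{I:Z/4Z curves G_i}, which exhibit the curves $E_{a,b}$ realizing the minimal break data $b_0=1$, $b_1=3$. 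The argument presents no serious obstacle; the only point requiring care is the translation of the minimized invariant into the top break $b_1$, together with the observation that the mod-$p$ congruence and the condition $p \nmid b_0$ force the jump from $b_0=1$ up to the next admissible value $b_1=3$.
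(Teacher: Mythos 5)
Your proof is correct and follows essentially the same route the paper intends: the paper states this corollary without a separate proof, treating it as immediate from Proposition~\ref{P:Z/4Z curves} together with the break constraints $1 \le b_0 < b_1$, $b_0 \equiv b_1 \pmod 2$ already invoked in the proof of \eqref{I:Z/4Z curves G_4}$\Rightarrow$\eqref{I:Z/4Z curves G_i}, which give $b_1 \ge 3$ and hence $\inf\{i: G_i=\{1\}\} = b_1+1 \ge 4$, with equality exactly in case \eqref{I:Z/4Z curves G_4} and attainment furnished by the curves $E_{a,b}$. Your translation of the minimized invariant into the top break and the verification of the equality case are exactly the intended argument.
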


Let $\wp(x) \colonequals x^2-x$ be the Artin--Schreier operator 
in characteristic~$2$.  The following lemma is clear.

\begin{lemma}
\label{L:little Artin-Schreier}
Let $L/K$ be a $\Z/2\Z$ Artin--Schreier extension,
so there exist $a \in K$ and $b \in L-K$ such that 
$\wp(b) = a$.
If $x \in L-K$ satisfies $\wp(x) \in K$, then $x \in b + K$.
\end{lemma}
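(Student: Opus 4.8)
The plan is to exploit that a $\Z/2\Z$ Artin--Schreier extension $L/K$ is Galois with group $\Z/2\Z$, together with the two formal properties of $\wp$ that drive the whole argument: it is $\F_2$-linear (being a polynomial map with coefficients in the prime field, so in particular additive), and its kernel on any field of characteristic~$2$ is exactly $\F_2=\{0,1\}$, since $\wp(c)=0$ means $c^2=c$. Because $\wp$ has prime-field coefficients, it commutes with every element of $\Gal(L/K)$. Let $\tau$ denote the nontrivial element of $\Gal(L/K)$, so that $K=L^\tau$.

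The core of the proof is to compute how $\tau$ acts on $b$ and on $x$. First I would note that $\wp(b)=a\in K$ is $\tau$-fixed; since $\wp$ commutes with $\tau$, this gives $\wp(\tau(b))=\tau(\wp(b))=\wp(b)$, hence $\wp(\tau(b)-b)=0$ by additivity, so $\tau(b)-b\in\F_2$. As $b\notin K=L^\tau$ we have $\tau(b)\neq b$, which forces $\tau(b)=b+1$. The \emph{identical} argument applied to $x$—using \emph{only} the hypotheses $\wp(x)\in K$ and $x\notin K$—yields $\tau(x)=x+1$. Then combining these, $\tau(x-b)=\tau(x)-\tau(b)=(x+1)-(b+1)=x-b$, so $x-b$ is fixed by $\tau$ and therefore lies in $K$; that is, $x\in b+K$, as desired.

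I expect no genuine obstacle here, which is consistent with the statement being flagged as clear: the only place requiring a moment's care is the step showing $\tau$ moves $x$ rather than fixing it, which is exactly where the hypothesis $x\in L-K$ enters (and where $\ker\wp=\F_2$ is used to pin down $\tau(x)=x+1$). Everything else is formal. If one prefers to avoid invoking $\tau$, the same conclusion follows purely algebraically: additivity of $\wp$ gives $\wp(x-b)=\wp(x)-\wp(b)\in K$, and writing $b$ in the $K$-basis $\{1,x-b\}$ of $L$ (assuming for contradiction that $x-b\notin K$) and imposing $\wp(b)\in K$ forces the offending coefficient to vanish, leading to $x\in K$, a contradiction.
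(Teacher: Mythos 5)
Your proof is correct and complete: the Galois-theoretic argument (additivity of $\wp$ in characteristic~$2$, $\ker\wp=\F_2$, and $\tau(b)=b+1$, $\tau(x)=x+1$ forcing $x-b\in L^\tau=K$) is exactly the standard justification. The paper itself offers no proof, simply declaring the lemma clear, so there is nothing to compare against; your write-up supplies the expected argument without any gaps.
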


\begin{theorem}
\label{T:surjection for order 4}
Let $k$ be a perfect field of characteristic~$2$.
Let $G=\Z/4\Z$.  Let $\XX$ be the set of HKG $G$-curves
satisfying the equivalent conditions in Proposition~\ref{P:Z/4Z curves}.
Then
\begin{enumerate}[\upshape (a)]
\item\label{I:surjection from XX}
The map~\eqref{E:take action at infinity}
restricts to a surjection from $\XX$
to the set of conjugacy classes in $\Aut(k[[t]])$
containing an almost rational automorphism of order~$4$.
\item\label{I:explicit sigma}
Explicitly, $E_{a,b}$ 
(made into an HKG $G$-curve as in Proposition~\ref{P:E_ab})
maps to the conjugacy class of 
\begin{equation}
\label{E:explicit sigma}
	\sigma_b(t) \colonequals \frac{b^2 t + (b+1)t^2 + \beta}{b^2+t^2},
\end{equation}
where $\beta \colonequals \sum_{i=0}^\infty (t^3+(b^2+b+1)t^2)^{2^i}$
is the unique solution to $\beta^2-\beta=t^3+(b^2+b+1)t^2$ in $t k[[t]]$.
\item\label{I:E_ab and E_a'b'}
For $b,b' \in k$, the automorphisms $\sigma_b,\sigma_{b'} \in \Aut(k[[t]])$
are conjugate if and only if $b \equiv b' \pmod{\wp(k)}$.
\end{enumerate}
\end{theorem}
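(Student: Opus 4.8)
The plan is to treat \eqref{I:surjection from XX} as a formal consequence of the structure theory already in place, to prove \eqref{I:explicit sigma} by exhibiting a \emph{second} Artin--Schreier model of $E_{a,b}$ in which a uniformizer at $O$ replaces $w$, and to prove \eqref{I:E_ab and E_a'b'} by writing down explicit conjugating isomorphisms for the ``if'' direction while extracting a $k/\wp(k)$-valued invariant for the ``only if'' direction. For \eqref{I:surjection from XX}, recall that \eqref{E:take action at infinity} is defined by letting $G$ act on $\widehat{\OO}_{E_{a,b},O}\isom k[[t]]$ through a uniformizer $t$ at $O$. Conversely, given any almost rational $\sigma$ of order $4$, the field $L=k(\{\sigma^i(t)\})$ has $d\colonequals[L:k(t)]=2$ by Definition~\ref{D:almost rational}, so the pointed $G$-curve $(X,x,\phi)$ of Section~\ref{s:ArA} satisfies the hypotheses of Theorem~\ref{T:order p^n}\eqref{I:cyclic of order p^n for n at least 2}; hence $(X,x,\phi)$ is an HKG $\Z/4\Z$-curve of genus~$1$, so it lies in $\XX$ by Proposition~\ref{P:Z/4Z curves}. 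Since $t$ is by construction a uniformizer at $x$ and the $G$-action on $\widehat{\OO}_{X,x}=k[[t]]$ is exactly $\sigma$, the curve maps under \eqref{E:take action at infinity} back to $[\sigma]$, giving surjectivity; that the image consists of almost rational classes is the content of \eqref{I:explicit sigma}.

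For \eqref{I:explicit sigma}, the given model $z^2-z=w^3+(b^2+b+1)w^2+a$ presents $k(E_{a,b})$ as Artin--Schreier over $k(w)$, but $w$ has a pole at $O$ and so is not the coordinate we want. The key step is to produce a presentation $k(E_{a,b})=k(t,\beta)$ with $\wp(\beta)=t^3+(b^2+b+1)t^2$ in which $t$ is a uniformizer at $O$; equivalently, to write the birational change of coordinates carrying $O$ to the point $(t,\beta)=(0,0)$ of the curve $\wp(\beta)=t^3+(b^2+b+1)t^2$. Guided by the target formula \eqref{E:explicit sigma}, I would seek $t$ having a simple zero at $O$ and a double pole at the Artin--Schreier branch point, together with a $\beta$ satisfying $\wp(\beta)\in k(t)$, verify $[k(E_{a,b}):k(t)]=2$ so that $E_{a,b}\to\PP^1_t$ is the asserted Artin--Schreier cover, and then substitute into $\sigma\colon(w,z)\mapsto(w+1,z+w+b)$ to read off the power series $\sigma(t)=\sigma_b(t)$. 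Because $\sigma(t)\notin k(t)$, the $G$-orbit of $t$ generates $L=k(E_{a,b})$, which is Artin--Schreier over $k(t)$, so the class is almost rational; this completes \eqref{I:surjection from XX} and yields \eqref{E:explicit sigma}. The constant $a$ enters only the constant term of the Artin--Schreier relation and is absorbed over $\kbar$ by replacing $\beta$ with $\beta+\delta$ where $\wp(\delta)=a$, which is why $\sigma_b$ is independent of $a$.

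For \eqref{I:E_ab and E_a'b'}, I would realize the ``if'' direction geometrically: a direct check shows that for $e,g\in k$ the affine map $\psi\colon(w,z)\mapsto(w+e,\,z+e^2w+g)$ carries $E_{a,b}$ isomorphically to $E_{a',b'}$ with $b'=b+\wp(e)$ (the choice $f=e^2$ being forced by the vanishing of the linear-in-$w$ term) and intertwines $\sigma$ on source and target. As $\psi$ fixes $O$, it is an equivariant isomorphism of pointed $G$-curves, hence induces a conjugation of the associated local embeddings, so $\sigma_b$ is conjugate to $\sigma_{b+\wp(e)}$; letting $e$ range over $k$ proves the ``if'' direction. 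For the ``only if'' direction I would extract an invariant in $k/\wp(k)$ directly from the conjugacy class. The natural candidate comes from Artin--Schreier--Witt theory for the $\Z/4\Z=\Z/2^2\Z$-extension: writing the $G$-action through a length-two Witt vector and isolating the constant part of its second component gives a well-defined element of $k/\wp(k)$, which the computation in \eqref{I:explicit sigma} identifies with $[b]$; Lemma~\ref{L:little Artin-Schreier} controls the ambiguity in $\beta$ at the first Witt level and should make this extraction clean.

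I expect the main obstacle to be the explicit change of coordinates in \eqref{I:explicit sigma}: the rest of \eqref{I:surjection from XX} is formal and the remaining steps are finite calculations once the right $t$ is in hand, whereas guessing the correct uniformizer --- the one simultaneously making $E_{a,b}\to\PP^1_t$ Artin--Schreier of the precise shape $\wp(\beta)=t^3+(b^2+b+1)t^2$ and transforming under $\sigma$ as in \eqref{E:explicit sigma} --- is where the real work lies. A secondary difficulty is the ``only if'' half of \eqref{I:E_ab and E_a'b'}: because \eqref{E:take action at infinity} is only a surjection (the curves $E_{a,b}$ and $E_{a',b}$ with $a\not\equiv a'\pmod{\wp(k)}$ are non-isomorphic yet give the same $\sigma_b$), non-conjugacy cannot be read off from non-isomorphism of curves, so a genuinely local invariant is required.
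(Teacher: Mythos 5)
Your outline for part~\eqref{I:surjection from XX} matches the paper's (surjectivity via Theorem~\ref{T:order p^n}\eqref{I:cyclic of order p^n for n at least 2}, almost-rationality via an Artin--Schreier presentation of $k(E_{a,b})$ over $k(t)$ for a uniformizer $t$ at $O$), and your global ``if''-direction map $(w,z)\mapsto(w+e,\,z+e^2w+g)$ for part~\eqref{I:E_ab and E_a'b'} is a correct computation. But there is a genuine gap at the point you treat most casually: the independence of the conjugacy class on $a$. You propose to absorb $a$ ``over $\kbar$ by replacing $\beta$ with $\beta+\delta$ where $\wp(\delta)=a$.'' Since $\delta\notin k$ when $a\notin\wp(k)$, this only proves conjugacy in $\Aut(\kbar[[t]])$, whereas the theorem asserts conjugacy in $\Aut(k[[t]])$ over the perfect (not necessarily closed) field $k$ --- and conjugacy does not descend: already for order-$p$ elements the conjugacy class over $k$ remembers $c\bmod k^{\times m}$ (Theorem~\ref{T:order p}), which collapses over $\kbar$. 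The same gap infects your ``if'' direction of \eqref{I:E_ab and E_a'b'}: your map $\psi$ sends $E_{a,b}$ to $E_{a',b'}$ for one \emph{specific} $a'$ determined by $a,e,g$ (the constant term forces $\wp(g)=e^3+(b'^2+b'+1)e^2+a'+a$), so to conclude $\sigma_b\sim\sigma_{b'}$ you again need the $a$-independence over $k$, which you have not established. The paper resolves this purely locally: in the completed function fields $\widehat K=k((w^{-1}))(z^{-1})$ it constructs a $G$-equivariant isomorphism $\alpha$ with $\alpha(w)=w'+c+\sum_{i\ge2}f_i{w'}^{-i}$, and the discrepancy $a-a'$ is absorbed into the tail coefficient $f_2$ of this change of uniformizer --- a local isomorphism that does \emph{not} come from a global isomorphism of curves, exactly the phenomenon you flag at the end but do not handle.

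Two further points. The ``only if'' half of \eqref{I:E_ab and E_a'b'} is in your write-up only a plan: you would need to show that the ``constant part of the second Witt component'' is well defined in $k/\wp(k)$ under an \emph{arbitrary} continuous $G$-equivariant isomorphism $\widehat K\isomto\widehat K'$ (not just one preserving your chosen coordinates), and that it equals $b$; that is precisely the content of the paper's coefficient-matching computation (Lemma~\ref{L:little Artin-Schreier} gives $\alpha(w)=w'+f$ with $f\in k[[{v'}^{-1}]]$, whence $f=c+O({w'}^{-2})$, and equivariance forces $b-b'=\wp(c)$), so your sketch is plausible but not a proof. Finally, for \eqref{I:explicit sigma} you correctly identify that the real work is finding the uniformizer, but you do not find it; the paper's answer is translation by $P=(0,0)\in E_{0,b}(k)$, giving $t=w_P=z/w^2$ and $\beta=z_P=1-z^2/w^3$ with $\wp(z_P)=w_P^3+(b^2+b+1)w_P^2$, after which $\sigma_b(t)$ is read off from $(t,\beta)\mapsto\sigma((t,\beta)-P)+P$.
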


\begin{proof}\hfill

\eqref{I:surjection from XX} 
First we show that each $E_{0,b}$ maps to a conjugacy class containing
an almost rational automorphism;
the same will follow for $E_{a,b}$ for $a \ne 0$
once we show in the proof of~\eqref{I:E_ab and E_a'b'}
that $E_{a,b}$ gives rise to the same conjugacy class as $E_{0,b}$.
Let $P \colonequals (0,0) \in E_{0,b}(k)$.
Composing $w$ with translation-by-$P$ 
yields a new rational function $w_P = z/w^2$ on $E_{0,b}$;
define $z_P$ similarly, so $z_P = 1 - z^2/w^3$.
Since $w$ has a simple zero at $P$,
the function $t \colonequals w_P$ has a simple zero at $O$.
Also, $\sigma^j(t) \in k(E_{0,b}) = k(t,z_P)$,
which shows that $\sigma$ is almost rational
since $z_P^2 - z_P = w_P^3 + (b^2+b+1) w_P^2$.

Now suppose that $\sigma$ is 
any almost rational automorphism of order~$4$.
Theorem~\ref{T:order p^n}\eqref{I:cyclic of order p^n for n at least 2} 
shows that $\sigma$ arises from an HKG $\Z/4\Z$-curve of genus~$1$,
i.e., a curve as in
Proposition~\ref{P:Z/4Z curves}\eqref{I:Z/4Z curves genus 1}.

\eqref{I:explicit sigma}
Again by referring to the proof of~\eqref{I:E_ab and E_a'b'},
we may assume $a=0$.
Follow the first half of the proof of~\eqref{I:surjection from XX} 
for $E_{0,b}$.
In terms of the translated coordinates $(w_P,z_P)$ on $E_{0,b}$,
the order~$4$ automorphism of the elliptic curve is
\[
	(t,\beta) \longmapsto \sigma((t,\beta) - P) + P.
\]
It is a straightforward but lengthy exercise to show that 
the first coordinate equals 
the expression $\sigma_b(t)$ in (\ref{E:explicit sigma}).  
One uses $t = w_P = z/w^2$,
$\beta = z_P = 1 - z^2/w^3$, 
and the formulas $\sigma(w) = w+1$ and $\sigma(z) = z + w + b$.
In verifying equalities in the field $k(t,\beta)$, one can use the fact that 
$k(t,\beta)$ is the quadratic Artin--Schreier extension of $k(t)$ defined
by $\beta^2 - \beta = t^3 + (b^2 + b + 1)t^2$.

\eqref{I:E_ab and E_a'b'} 
Let $v \colonequals w^2-w$.
Let $\widehat{\OO}$ be the completion of the local ring of $E_{a,b}$ 
at the point $O$ at infinity,
and let $\widehat{K} \colonequals \Frac(\widehat{\OO}) = k((w^{-1}))(z^{-1})$.
With respect to the discrete valuation on $\widehat{K}$, 
the valuations of $w$, $z$ and $v$ are $-2$, $-3$ and $-4$, respectively.  
With respect to the discrete valuation on $k((w^{-1}))$, 
the valuation of $w$ is $-1$ and the valuation of $v$ is $-2$.  
We have $\widehat{K}^G = k((v^{-1}))$.
Define $w'$, $z'$, $v'$, $\sigma'$, $\widehat{\OO}'$, 
and $\widehat{K}' = k(({w'}^{-1}))({z'}^{-1})$ similarly for $E_{a',b'}$.
By definition of the map~\eqref{E:take action at infinity},
$E_{a,b}$ and $E_{a',b'}$ give rise to the same conjugacy class 
if and only if
there exists a $G$-equivariant continuous isomorphism
$\widehat{\OO} \isomto \widehat{\OO}'$
or equivalently $\alpha \colon \widehat{K} \isomto \widehat{K}'$.
It remains to prove that $\alpha$ exists 
if and only if $b \equiv b' \pmod{\wp(k)}$.

$\implies$:
Suppose that $\alpha$ exists.
Lemma~\ref{L:little Artin-Schreier}
shows that $\alpha(w) = w' + f$ for some $f \in k(({v'}^{-1}))$.
Since $\alpha$ preserves valuations, $f \in k[[{v'}^{-1}]]$.
Since $v'$ has valuation $-2$ in $k(({w'}^{-1}))$, the valuation of ${v'}^{-1}$
in this field is $2$.  Therefore $f \in k[[{v'}^{-1}]]$ implies 
 $f = c + \sum_{i \ge 2} f_i {w'}^{-i}$ for some $c, f_i \in k$.
Similarly, $\alpha(z) = z' + h$ 
for some $h = \sum_{i \ge -1} h_i {w'}^{-i} \in w' k[[{w'}^{-1}]]$.
Subtracting the equations
\begin{align*}
	\alpha(z)^2 - \alpha(z) &= \alpha(w)^3 + (b^2+b+1) \alpha(w)^2 + a \\
	{z'}^2 - z' &= {w'}^3 + ({b'}^2+b'+1) {w'}^2 + a' 
\end{align*}
yields
\begin{align}
\label{E:h and f}
	h^2 - h &= (w' + f)^3 - {w'}^3 + (b^2+b+1) (w'+f)^2 - ({b'}^2+b'+1) {w'}^2 + a - a' \\
\nonumber	 &= {w'}^2 f + w' f^2 + f^3 + \wp(b-b') {w'}^2 + (b^2+b+1) f^2 +a-a' \\
\label{E:h and f congruence}
 h^2-h	&\equiv (c + \wp(b-b')) {w'}^2 + c^2 w' + (f_2 + c^3 + (b^2+b+1) c^2 + a-a') \pmod{w'^{-1} k[[w'^{-1}]]}.
\end{align}
Equating coefficients of $w'$ yields $h_{-1} = c^2$.
The $G$-equivariance of $\alpha$ implies 
\begin{align}
\nonumber
	\alpha(\sigma(z)) &= \sigma'(\alpha(z)) \\
\nonumber
	(z'+h) + (w'+f) + b &= (z' + w' + b') + \sigma'(h) \\
\nonumber
	h + f + b &= b' + \sigma'(h) \\
\label{E:equivariance congruence}
	h_{-1} w' + h_0 + c + b &\equiv b' + h_{-1}(w'+1) + h_0 \pmod{{w'}^{-1} k[[{w'}^{-1}]]} \\
\nonumber
	b - b' &= h_{-1} - c = c^2 - c = \wp(c).
\end{align}

$\impliedby$: Conversely, suppose that $b-b' = \wp(c)$ for some $c \in k$.
We must build a $G$-equivariant continuous isomorphism 
$\alpha \colon \widehat{K} \isomto \widehat{K}'$.
Choose $f \colonequals c + \sum_{i \ge 2} f_i {w'}^{-i}$ in $k[[v'^{-1}]]$ 
so that the value of $f_2$ makes the coefficient of ${w'}^0$
in~\eqref{E:h and f congruence}, namely the constant term, equal to $0$.
The coefficient of ${w'}^2$ in~\eqref{E:h and f congruence} is 
$c+ \wp(\wp(c)) = c^4$.
So \eqref{E:h and f congruence} simplifies to
\[
	h^2 - h \equiv c^4 {w'}^2 + c^2 {w'} \pmod{{w'}^{-1} k[[{w'}^{-1}]]}.
\]
Thus we may choose $h \colonequals c^2 w' + \sum_{i \ge 1} h_i {w'}^{-i}$
so that \eqref{E:h and f} holds.
Define $\alpha \colon k((w^{-1})) \to k(({w'}^{-1}))$
by $\alpha(w) \colonequals w'+f$.
Equation~\eqref{E:h and f} implies that $\alpha$
extends to $\alpha \colon \widehat{K} \to \widehat{K}'$
by setting $\alpha(z) \colonequals z'+h$.
Then $\alpha|_{k((w^{-1}))}$ is $G$-equivariant
since $(w'+1) + f = (w'+f) + 1$.
In other words, 
$\sigma^{-1} \alpha^{-1} \sigma' \alpha \in \Gal(\widehat{K}/k((w^{-1}))) = \{1,\sigma^2\}$.
If $\sigma^{-1} \alpha^{-1} \sigma' \alpha = \sigma^2$,
then
\begin{align*}
	\alpha \sigma^3 &= \sigma' \alpha \\
	\alpha(\sigma^3(z)) &= \sigma'(\alpha(z)) \\
	\alpha(z + w + b + 1) &= \sigma'(z'+h) \\
	 (z' + h) + (w' + f) + b + 1 &= (z'+w'+b') + \sigma'(h);
\end{align*}
by the calculation leading to~\eqref{E:equivariance congruence},
this is off by $1$ modulo ${w'}^{-1} k[[{w'}^{-1}]]$.
Thus $\sigma^{-1} \alpha^{-1} \sigma' \alpha = 1$ instead.
In other words, $\alpha$ is $G$-equivariant.
\end{proof}

\begin{remark}
\label{R:inverse}
Changing $b$ to $b+1$ does not change the curve $E_{a,b}$,
but it changes $\sigma$ to $\sigma^{-1}$.
Thus $\sigma$ and $\sigma^{-1}$ are conjugate in $\Aut(k[[t]])$
if and only if $1 \in \wp(k)$, i.e., 
if and only if $k$ contains a primitive cube root of unity.
\end{remark}

Combining Theorems \ref{T:order p^n}\eqref{I:cyclic of order p^n for n at least 2} and~\ref{T:surjection for order 4}
proves Theorem~\ref{T:almost rational} (and a little more).

\section{Constructions of Harbater--Katz--Gabber curves}
\label{S:constructions}

In this section we construct some examples needed for the proofs of
Theorems~\ref{T:solvable} and \ref{thm:kgsolv}.
Let $k$ be an algebraically closed field of characteristic $p > 0$.
Let $(Y,y)$ be an HKG $H$-curve over $k$.
If the $H$-action on $Y-\{y\}$ has a tamely ramified orbit,
let $S$ be that orbit; 
otherwise let $S$ be any $H$-orbit in $Y-\{y\}$.
Let $S'=S \union \{y\}$.
Let $m,n \in \Z_{\ge 1}$.
Suppose that $p \nmid n$, that $mn$ divides $|S'|$,
that the divisor $\sum_{s \in S'}(s-y)$ is principal,
and that for all $s \in S'$, the divisor $m(s-y)$ is principal.

Choose $f \in k(Y)^\times$ with divisor $\sum_{s \in S'} (s-y)$.
Let $\pi \colon X \to Y$ be the cover with $k(X)=k(Y)(z)$,
where $z$ satisfies $z^n=f$.
Let $C \colonequals \Aut(X/Y)$, so $C$ is cyclic of order~$n$.
Let $x$ be the point of $X(k)$ such that $\pi(x)=y$.
Let $G \colonequals \{\gamma \in \Aut(X) : \gamma|_{k(Y)} \in H\}$.

\begin{proposition}
\label{prop:constructautos}
Let $k,Y,H,S',n,X,C,G$ be as above.
\begin{enumerate}[\upshape (a)]
\item \label{I:automorphisms lift}
Every automorphism of $Y$ preserving $S'$
lifts to an automorphism of $X$ (in $n$ ways).
\item \label{I:CGH}
The sequence $1 \to C \to G \to H \to 1$ is exact.
\item \label{I:lift is HKG}
We have that $(X,x)$ is an HKG $G$-curve.
\end{enumerate}
\end{proposition}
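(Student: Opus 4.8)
The plan is to first pin down the ramification of the tame cyclic cover $\pi\colon X \to Y$, and then deduce parts (a), (b), (c) in that order, with (c) coming from an analysis of the tower $X \xrightarrow{\pi} Y \to Y/H$. Since $y \in S'$, the term $s=y$ in $\sum_{s\in S'}(s-y)$ contributes $0$, so $\operatorname{div}(f) = \sum_{s \in S}(s) - |S|\,y$ with $|S|=|S'|-1$. Because $n \mid mn \mid |S'|$ we get $|S| \equiv -1 \pmod n$, hence $\gcd(|S|,n)=1$, while $\gcd(1,n)=1$ at each $s \in S$. Thus the degree-$n$ Kummer cover $z^n=f$ (tame, as $p \nmid n$) is \emph{totally} ramified over $y$ and over each $s \in S$ and unramified elsewhere; in particular $\pi^{-1}(y)=\{x\}$, and $\pi^{-1}(S)$ is a single point over each $s\in S$.

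For part (a), given $\tau\in\Aut(Y)$ preserving $S'$, I would compute $\operatorname{div}(\tau^*f/f)$. Using $\operatorname{ord}_P(\tau^*f)=\operatorname{ord}_{\tau(P)}(f)$ and that $\tau$ permutes $S'$, the zero divisor $\sum_{s\in S'}s$ is preserved while the pole moves from $y$ to $\tau^{-1}(y)$, giving $\operatorname{div}(\tau^*f/f)=|S'|\,(y-\tau^{-1}(y))$. Here the two divisibility hypotheses enter: writing $|S'|=mn\ell$ and using that $m(\tau^{-1}(y)-y)$ is principal (as $\tau^{-1}(y)\in S'$), this divisor is $n$ times a principal divisor, so $\tau^*f/f$ is a constant times an $n$-th power in $k(Y)^\times$; since $k$ is algebraically closed the constant is itself an $n$-th power. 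Hence $\tau^*f=w^n$ with $w$ a $k(Y)^\times$-multiple of $z$, and $\tilde\tau^*(z)\colonequals w$ extends $\tau^*$ to an automorphism of $k(X)$. Multiplying $w$ by the $n$ roots of unity in $k$ gives exactly the $n$ lifts, which differ by $C$.

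For part (b), the map $G\to H$, $\gamma\mapsto\gamma|_{k(Y)}$, has kernel $\{\gamma:\gamma|_{k(Y)}=\mathrm{id}\}=\Aut(X/Y)=C$ by definition, so the sequence is exact on the left. Each $h\in H$ fixes $y$ and preserves the $H$-orbit $S$, hence preserves $S'$, so part (a) produces a lift in $G$ over $h$; thus $G\surjects H$. For part (c), since $C\trianglelefteq G$ with $G/C\isom H$ by (b), we get $k(X)^G=(k(X)^C)^{G/C}=k(Y)^H$, so $X/G\isom Y/H\isom\PP^1$, which is condition (i) of Definition~\ref{dfn:katzG}; moreover $G$ fixes $x$ since it preserves $\pi^{-1}(y)=\{x\}$, so $(X,x)$ is a pointed $G$-curve. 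For condition (ii) I would trace ramification of $X\to X/G$ through $X \xrightarrow{\pi} Y \to Y/H$, multiplying ramification indices. Away from $x$, ramification can occur only where $\pi$ or $q\colon Y\to Y/H$ ramifies: beyond $y$, the cover $q$ ramifies only along the tame orbit (when it exists) and $\pi$ ramifies only over $S$. By the choice of $S$, in both cases all such ramification lies over the single $H$-orbit $S$, with ramification index $n$ or $n\cdot|H_s|$, which is prime to $p$ and hence tame; and $\pi^{-1}(S)$ is one $G$-orbit since $G\surjects H$ acts transitively on $S$ and $\pi$ is totally ramified over $S$. This gives condition (ii).

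The hard part, I expect, will be the bookkeeping in part (c): cleanly separating the case where $S$ is the tame orbit from the case where it is a free orbit, and confirming that these are the \emph{only} sources of ramification off $x$. The one genuinely hypothesis-driven step is the divisor identity in part (a) that forces $\tau^*f/f$ to be an $n$-th power, together with the coprimality $\gcd(|S|,n)=1$ that makes $\pi$ totally ramified over $y$.
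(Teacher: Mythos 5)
Your proposal is correct and follows essentially the same route as the paper: the same divisor computation $\operatorname{div}(\tau^*f/f)=|S'|\bigl(y-\tau^{-1}(y)\bigr)$ turned into an $n$-th power via the two principality hypotheses for (a), surjectivity deduced from (a) for (b), and condition (ii) verified by tracing the ramification of $X \to Y \to Y/H$, all of which is concentrated over $S'$ and is tame of index dividing $n\,|H_s|$ away from $x$, for (c). The only difference is cosmetic: you spell out the total ramification of $\pi$ over every point of $S'$ via $\operatorname{ord}_s(f)\equiv 1 \pmod n$, which is exactly the observation the paper also uses.
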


\begin{proof}
\hfill
\begin{enumerate}[\upshape (a)]
\item 
Suppose that $\alpha \in \Aut(Y)$ preserves $S'$.
Then $\operatorname{div}({}^\alpha \! f/f) = (|S|+1)({}^\alpha y - y)$,
which is $n$ times an integer multiple of 
the principal divisor $m({}^\alpha y - y)$,
so ${}^\alpha \! f/f = g^n$ for some $g \in k(Y)^\times$.
Extend $\alpha$ to an automorphism of $k(X)$
by defining ${}^\alpha z \colonequals g z$;
this is well-defined since the relation $z^n=f$ is preserved.
Given one lift, all others are obtained by composing with elements of $C$.
\item 
Only the surjectivity of $G \to H$ is nontrivial,
and that follows from~\eqref{I:automorphisms lift}.
\item 
The quotient $X/G$ is isomorphic to $(X/C)/(G/C) = Y/H$,
which is of genus~$0$.
In the covers $X \to X/C \isom Y \to X/G \isom Y/H$,
all the ramification occurs above and below $S'$.
The valuation of $f$ at each point of $S'$
is $1 \bmod n$, so $X \to Y$ is totally ramified above $S'$.
Hence each ramified $G$-orbit in $X$ maps bijectively to an $H$-orbit in $Y$,
and each nontrivial inertia group in $G$ 
is an extension of a nontrivial inertia group of $H$ by $C$.
Thus, outside the totally ramified $G$-orbit $\{x\}$,
there is at most one ramified $G$-orbit and it is tamely ramified.\qedhere
\end{enumerate}
\end{proof}

\begin{example}
\label{Ex:building on genus 0}
Let $(Y,y)=(\PP^1,\infty)$, with coordinate function $t \in k(\PP^1)$.
Let $H \le \PGL_2(\F_q)$ be a group fixing $\infty$ 
and acting transitively on $\Aff^1(\F_q)$.
(One example is 
$H \colonequals \begin{pmatrix} 1 & \F_q \\ 0 & 1 \end{pmatrix}$.)
Let $n$ be a positive divisor of $q+1$.
Then the curve $z^n = t^q - t$ equipped with the point above $\infty$
is an HKG $G$-curve,
where $G$ is the set of automorphisms lifting those in $H$.
(Here $S'=\PP^1(\F_q)$, $m=1$, and $f = t^q-t \in k(\PP^1)$.
Degree~$0$ divisors on $\PP^1$ are automatically principal.)
\end{example}

\begin{example}
\label{Ex:building on genus 1, p=2}
Let $p=2$.
Let $(Y,y)$ be the $j$-invariant $0$ elliptic curve $u^2+u=t^3$ 
with its identity,
so $\#\Aut(Y,y)=24$ \cite[Chapter~3, \S6]{husemoeller}.
Let $H$ be $\Aut(Y,y)$ or its Sylow $2$-subgroup.
Then $k(Y)(\sqrt[3]{t^4+t})$ is the function field of an HKG $G$-curve $X$,
for an extension $G$ of $H$ by a cyclic group of order~$3$.
(Here $S'=Y(\F_4)$, 
which is also the set of $3$-torsion points on $Y$, 
and $m=n=3$, and $f = t^4+t$.)
Eliminating $t$ by cubing $z^3=t^4+t$ and substituting $t^3=u^2+u$
leads to the equation $z^9 = (u^2+u)(u^2+u+1)^3$ for $X$.
\end{example}

\begin{example}
\label{Ex:building on genus 1, p=3}
Let $p=3$.
Let $(Y,y)$ be the $j$-invariant $0$ elliptic curve $u^2=t^3-t$
with its identity,
so $\#\Aut(Y,y)=12$ \cite[Chapter~3, \S5]{husemoeller}.
Let $H$ be a group between $\Aut(Y,y)$ and its Sylow $3$-subgroup.
Then $k(Y)(\sqrt{u})$ is the function field of an HKG $G$-curve $X$,
for an extension $G$ of $H$ by a cyclic group of order~$2$.
(Here $S'$ is the set of $2$-torsion points on $Y$,
and $m=n=2$, and $f = u$.)
Thus $X$ has affine equation $z^4=t^3-t$.
(This curve is isomorphic to the curve in 
Example~\ref{Ex:building on genus 0} for $q=3$,
but $|C|$ here is $2$ instead of $4$.)
\end{example}

\section{Harbater--Katz--Gabber curves with extra automorphisms}
\label{S:extra automorphisms}

We return to assuming only that $k$ is perfect of characteristic $p$.
Throughout this section, $(X,x)$ is an HKG $G$-curve over $k$, 
and $J$ is a finite group such that $G \le J \le \Aut(X)$.
Let $J_x$ be the decomposition group of $x$ in $J$.
Choose Sylow $p$-subgroups $P \le P_x \le P_J$ 
of $G \le J_x \le J$, respectively.
In fact, $P \le G$ is uniquely determined
since $G$ is cyclic mod~$p$ by Lemma~\ref{L:G is cyclic mod p};
similarly $P_x \le J_x$ is uniquely determined.

\subsection{General results}

\begin{proof}[Proof of Theorem~\ref{T:J fixes x}]
If $(X,x)$ is an HKG $J$-curve, then $J$ fixes $x$, by definition.

Now suppose that $J$ fixes $x$.
By Lemma~\ref{L:G is cyclic mod p}, $J$ is cyclic mod~$p$.
By Proposition~\ref{P:G-curve vs. P-curve}\eqref{I:G-curve}$\Rightarrow$\eqref{I:P-curve},
$(X,x)$ is an HKG $P$-curve.
Identify $X/P$ with $\PP^1_k$ so that $x$ maps to $\infty \in X/P \isom \PP^1_k$.

\emph{Case 1: $J$ normalizes $G$.}
Then $J$ normalizes also the unique Sylow $p$-subgroup $P$ of $G$.
In particular, $P$ is normal in $P_J$.
If a $p$-group acts on $\PP^1_k$ fixing $\infty$,
it must act by translations on $\Aff^1_k$;
applying this to the action of $P_J/P$ on $X/P$ shows that
$X/P \to X/P_J$ is unramified outside $\infty$.
Also, $X \to X/P$ is unramified outside $x$.
Thus the composition $X \to X/P \to X/P_J$ is unramified outside $x$.
On the other hand, $X/P_J$ is dominated by $X/P$, so $g_{X/P_J}=0$.
By Proposition~\ref{P:G-curve vs. P-curve}\eqref{I:strong P-curve}$\Rightarrow$\eqref{I:G-curve},
$(X,x)$ is an HKG $J$-curve.

\emph{Case 2: $J$ is arbitrary.}
There exists a chain of subgroups beginning at $P$ and ending at $P_J$,
each normal in the next.
Ascending the chain, applying Case~1 at each step, 
shows that $(X,x)$ is an HKG curve for each
group in this chain, and in particular for $P_J$.
By Proposition~\ref{P:G-curve vs. P-curve}\eqref{I:P-curve}$\Rightarrow$\eqref{I:G-curve},
$(X,x)$ is also an HKG $J$-curve.
\end{proof}

\begin{corollary}
\label{C:J_x-curve and P_x-curve}
We have that $(X,x)$ is an HKG $J_x$-curve and an HKG $P_x$-curve.
\end{corollary}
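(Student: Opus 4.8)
The plan is to deduce both assertions directly from Theorem~\ref{T:J fixes x} and Proposition~\ref{P:G-curve vs. P-curve}, applied to the intermediate group $J_x$ rather than to $J$ itself. The point is that $J_x$ fits into the same setup as $J$ but automatically fixes $x$, so it lands squarely in the "yes" case of Question~\ref{q:bigquestion}.

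First I would check that the hypotheses of Theorem~\ref{T:J fixes x} hold for $J_x$. Since $(X,x)$ is an HKG $G$-curve, the group $G$ fixes $x$; combined with $G \le J$, this forces $G \le \Stab_J(x) = J_x$. Hence $G \le J_x \le \Aut(X)$, so the triple $(X,x)$ with the group $J_x$ is a configuration of exactly the type governed by Theorem~\ref{T:J fixes x}, now with $J_x$ playing the role of $J$. By the very definition of the decomposition group, $J_x$ fixes $x$, so Theorem~\ref{T:J fixes x} (applied with $J_x$ in place of $J$) yields that $(X,x)$ is an HKG $J_x$-curve.

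Next I would pass to the Sylow $p$-subgroup. The group $P_x$ was chosen to be a Sylow $p$-subgroup of $J_x$, and it is the unique one because $J_x$ fixes $x$ and hence is cyclic mod~$p$ by Lemma~\ref{L:G is cyclic mod p} (its action on $\widehat{\OO}_{X,x}$ embeds it into $\Aut(k[[t]])$). Having just established that $(X,x)$ is an HKG $J_x$-curve, I would invoke the implication \eqref{I:G-curve}$\Rightarrow$\eqref{I:P-curve} of Proposition~\ref{P:G-curve vs. P-curve}, now reading that proposition with $J_x$ in place of $G$ and $P_x$ as its Sylow $p$-subgroup; this gives that $(X,x)$ is an HKG $P_x$-curve.

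I do not anticipate any real obstacle: the corollary is a formal consequence of the two preceding results. The only step that warrants a moment's care is the inclusion $G \le J_x$, i.e., the observation that $G \le J$ together with the fact that $G$ fixes $x$ places $G$ inside the decomposition group $J_x$; once this is noted, both conclusions follow by a direct application of Theorem~\ref{T:J fixes x} and Proposition~\ref{P:G-curve vs. P-curve}.
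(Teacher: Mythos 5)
Your proof is correct and follows exactly the paper's argument: apply Theorem~\ref{T:J fixes x} with $J_x$ in place of $J$ (the inclusion $G \le J_x$ being immediate since $G$ fixes $x$), then apply Proposition~\ref{P:G-curve vs. P-curve}\eqref{I:G-curve}$\Rightarrow$\eqref{I:P-curve}. The extra care you take in verifying $G \le J_x$ and the uniqueness of $P_x$ is sound but not needed beyond what the paper's two-line proof already implicitly uses.
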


\begin{proof}
Apply Theorem~\ref{T:J fixes x} with $J_x$ in place of $J$.
Then apply Proposition~\ref{P:G-curve vs. P-curve}\eqref{I:G-curve}$\Rightarrow$\eqref{I:P-curve}.
\end{proof}

\begin{lemma}
\label{L:maximal p' subgroup}
Among $p'$-subgroups of $J_x$ that are normal in $J$,
there is a unique maximal one; call it $C$.
Then $C$ is cyclic, and central in $J_x$.
\end{lemma}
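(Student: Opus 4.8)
The plan is to work inside the embedding $J_x \hookrightarrow \Aut(k[[t]])$ afforded by the action of $J_x$ on $\widehat{\OO}_{X,x} \cong k[[t]]$; this is legitimate because, by Corollary~\ref{C:J_x-curve and P_x-curve}, $(X,x)$ is an HKG $J_x$-curve, so in particular $J_x$ is realized as a finite subgroup of $\Aut(k[[t]])$. Throughout I would exploit the leading-coefficient homomorphism $a_1 \colon \Aut(k[[t]]) \to k^\times$ whose kernel is the Nottingham group $\mathcal N(k)$; the two facts I will lean on from Section~\ref{s:ARresults} are that every finite subgroup of $\mathcal N(k)$ is a $p$-group and that every finite subgroup of $k^\times$ is cyclic.

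First I would establish existence and uniqueness of $C$ by showing that the set $\mathcal S$ of $p'$-subgroups of $J_x$ that are normal in $J$ is closed under products. If $N_1, N_2 \in \mathcal S$, then each is normal in $J$, so they normalize one another and $N_1 N_2$ is a subgroup of $J_x$, again normal in $J$; its order divides $|N_1|\,|N_2|$ and is therefore prime to $p$, so $N_1 N_2 \in \mathcal S$. Since $J_x$ is finite, $\mathcal S$ is finite, and (by induction on this pairwise closure) the product of all its members lies in $\mathcal S$ and contains every member; this is the desired unique maximal element $C$.

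Next I would prove that $C$ is cyclic. Because $C$ is a $p'$-group while every finite subgroup of $\mathcal N(k) = \ker a_1$ is a $p$-group, the intersection $C \cap \mathcal N(k)$ is both a $p$-group and a $p'$-group, hence trivial. Thus $a_1|_C$ is injective and identifies $C$ with a finite subgroup of $k^\times$, which is cyclic. For centrality in $J_x$, I would take $\sigma \in J_x$ and $\tau \in C$: since $C$ is normal in $J$ (hence in $J_x$), the conjugate $\sigma \tau \sigma^{-1}$ again lies in $C$; as $k^\times$ is abelian and $a_1$ is a homomorphism, $a_1(\sigma\tau\sigma^{-1}) = a_1(\tau)$; but $a_1|_C$ is injective by the previous step, so $\sigma\tau\sigma^{-1} = \tau$, and $C$ is central in $J_x$.

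I expect the only genuinely delicate point to be the centrality step, and the key idea there is that an element of the $p'$-group $C$ is \emph{determined by its leading coefficient} (injectivity of $a_1|_C$), which collapses the a priori nontrivial conjugation action of $J_x$ on $C$ to the identity. Note also that this argument gives centrality only in $J_x$ and not in all of $J$, since an element of $J \smallsetminus J_x$ does not fix $x$ and so has no leading-coefficient invariant to appeal to — consistent with the statement of the lemma. Everything else is routine given the structure of $\Aut(k[[t]])$ recalled earlier.
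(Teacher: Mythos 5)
Your proof is correct and follows essentially the same route as the paper's: the paper also forms $C$ as the subgroup generated by all such $p'$-subgroups, and deduces cyclicity and centrality from the fact that $C$ injects into an abelian quotient on which conjugation by $J_x$ is trivial. The only cosmetic difference is that you realize this quotient concretely as a subgroup of $k^\times$ via the leading-coefficient map, whereas the paper uses the abstract quotient $J_x/P_x$ (cyclic because $J_x$ is cyclic mod~$p$) — these are the same map, since $\ker(a_1|_{J_x}) = P_x$.
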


\begin{proof}
Let $C$ be the group generated by all $p'$-subgroups of $J_x$ 
that are normal in $J$.
Then $C$ is another group of the same type, so it is the unique maximal one.
By Lemma~\ref{L:G is cyclic mod p}, $J_x$ is cyclic mod~$p$, 
so $J_x/P_x$ is cyclic.
Since $C$ is a $p'$-group, $C \to J_x/P_x$ is injective.
Thus $C$ is cyclic.
The injective homomorphism $C \to J_x/P_x$ 
respects the conjugation action of $J_x$ on each group.
Since $J_x/P_x$ is abelian, the action on $J_x/P_x$ is trivial.
Thus the action on $C$ is trivial too; i.e., $C$ is central in $J_x$.
\end{proof}

\subsection{Low genus cases}
\label{S:low genus}

Define $A \colonequals \Aut(X,x)$, so $G \le A$.
By Theorem~\ref{T:J fixes x}, $(X,x)$ is an HKG $J$-curve
if and only if $J \le A$.
When $g_X \le 1$, we can describe $A$ very explicitly.

\begin{example}
\label{Ex:genus 0}
Suppose that $g_X=0$.
Then $(X,x) \isom (\PP^1_k,\infty)$.
Thus $\Aut(X) \isom \PGL_2(k)$, and $A$ is identified with 
the image in $\PGL_2(k)$ of 
the group of upper triangular matrices in $\GL_2(k)$.
\end{example}

\begin{example}
\label{Ex:genus 1}
Suppose that $g_X=1$.
Then $(X,x)$ is an elliptic curve,
and $\Aut(X) \isom X(k) \rtimes A$.
Let $\calA \colonequals \Aut(X_{\kbar},x)$
be the automorphism group of the elliptic curve over $\kbar$.
Now $p$ divides $|G|$, since otherwise it follows from
Example~\ref{Ex:p'-curve} that $g_X=0$.
Thus $G$ contains an order~$p$ element, 
which by the HKG property has a unique fixed point.
Since $G \le A \le \calA$, the group $\calA$ also
contains such an element.
By the computation of $\calA$ 
(in~\cite[Chapter~3]{husemoeller}, for instance),
$p$ is $2$ or~$3$, and $X$ is supersingular,
so $X$ has $j$-invariant~$0$.
Explicitly:
\begin{itemize}
\item If $p=2$, 
then $\calA \isom \SL_2(\F_3) \isom Q_8 \rtimes \Z/3\Z$ (order $24$),
and $G$ is $\Z/2\Z$, $\Z/4\Z$, $Q_8$, or $\SL_2(\F_3)$.
\item If $p=3$,
then $\calA \isom \Z/3\Z \rtimes \Z/4\Z$ (order $12$),
and $G$ is $\Z/3\Z$, $\Z/6\Z$, or $\Z/3\Z \rtimes \Z/4\Z$.
\end{itemize}
Because of Corollary~\ref{C:J_x-curve and P_x-curve},
the statement about $G$ is valid also for $J_x$.
\end{example}

\subsection{Cases in which $p$ divides $|G|$}

If $p$ divides $|G|$, then we can strengthen Theorem~\ref{T:J fixes x}:
see Theorem~\ref{thm:cycp} and Corollary~\ref{cor:Jx} below.

\begin{lemma}
\label{L:G is normal in J}
If $p$ divides $|G|$ and $G$ is normal in $J$,
then $J$ fixes $x$.
\end{lemma}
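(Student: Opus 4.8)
The plan is to pin down $x$ as the \emph{unique} point of $X$ fixed by any nontrivial element of the Sylow $p$-subgroup $P \le G$, and then to use the normality of $G$ (hence of $P$) in $J$ to transport this fixed-point property to every translate $\tau(x)$.

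First I would establish the key local fact. Since $p$ divides $|G|$, the group $P$ is nontrivial. By Proposition~\ref{P:G-curve vs. P-curve}, the HKG $G$-curve $(X,x)$ is also an HKG $P$-curve, and the implication \eqref{I:P-curve}$\Rightarrow$\eqref{I:strong P-curve} shows that the action of $P$ on $X-\{x\}$ is unramified. Working over $\kbar$ (legitimate by the base-change property of HKG curves), I claim that for every nontrivial $\sigma \in P$ the fixed locus $X^\sigma$ equals $\{x\}$. Indeed, if such a $\sigma$ fixes some $s \in X(\kbar)$, then $\sigma$ lies in the inertia group of $s$: the residue field is algebraically closed, so the decomposition group there coincides with the inertia group. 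As $\sigma$ has $p$-power order, this makes $s$ a (wildly) ramified point for the $P$-action, which by the previous sentence forces $s = x$; and $x$ is certainly fixed by all of $P$.

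Next I would pass the normality through the group theory. Since $G$ is cyclic mod~$p$ by Lemma~\ref{L:G is cyclic mod p}, the subgroup $P$ is the unique Sylow $p$-subgroup of $G$, hence characteristic in $G$. Combined with the hypothesis that $G$ is normal in $J$, this gives that $P$ is normal in $J$, because conjugation by any element of $J$ restricts to an automorphism of $G$ and therefore preserves $P$.

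Finally I would conclude. Fix a nontrivial $\sigma \in P$ and an arbitrary $\tau \in J$. Because $P$ is normal in $J$, the element $\tau^{-1}\sigma\tau$ lies in $P$ and so fixes $x$; applying $\tau$ yields $\sigma(\tau(x)) = \tau(x)$, whence $\tau(x) \in X^\sigma = \{x\}$ and thus $\tau(x) = x$. As $\tau$ was arbitrary, $J$ fixes $x$. The only step requiring genuine care is the local claim of the second paragraph: that a nontrivial automorphism of $p$-power order fixing a point must ramify there. This is the standard feature of $p$-group actions in characteristic~$p$, and it is precisely what converts the geometric statement ``unramified outside $x$'' into the set-theoretic statement ``no nontrivial element of $P$ fixes a point other than $x$'' that drives the argument.
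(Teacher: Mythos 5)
Your proposal is correct and is essentially the paper's own argument, just written out in more detail: the paper's two-sentence proof likewise observes that tameness of the ramification outside $x$ forces $x$ to be the unique fixed point of (the $p$-part of) $G$, and then uses normality of $G$ in $J$ to conclude that $J$ preserves, hence fixes, that point.
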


\begin{proof}
Ramification outside $x$ is tame,
so if $p$ divides $|G|$, then $x$ is the unique point fixed by $G$.
If, in addition, $J$ normalizes $G$,
then $J$ must fix this point.
\end{proof}

\begin{theorem}
\label{thm:cycp}
If $p$ divides $|G|$, then the following are equivalent:
\begin{enumerate}[\upshape (i)]
\item \label{I:HKG J-curve}
$(X,x)$ is an HKG $J$-curve.
 \item \label{I:J fixes x}
 $J$ fixes $x$.
 \item \label{I:cyclic mod p}
 $J$ is cyclic mod~$p$.
 \end{enumerate}
\end{theorem}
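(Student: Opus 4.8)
The plan is to establish the three implications \eqref{I:HKG J-curve}$\Leftrightarrow$\eqref{I:J fixes x} and \eqref{I:J fixes x}$\Leftrightarrow$\eqref{I:cyclic mod p}, treating the two equivalences quite differently. The equivalence \eqref{I:HKG J-curve}$\Leftrightarrow$\eqref{I:J fixes x} is immediate: it is exactly Theorem~\ref{T:J fixes x}, which does not even use the hypothesis $p \mid |G|$. For \eqref{I:J fixes x}$\Rightarrow$\eqref{I:cyclic mod p}, observe that if $J$ fixes $x$ then $(X,x)$ together with the inclusion $J \hookrightarrow \Aut(X)$ is a pointed $J$-curve, so Lemma~\ref{L:G is cyclic mod p} forces $J$ to be cyclic mod~$p$. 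Thus all the real content lies in the remaining direction \eqref{I:cyclic mod p}$\Rightarrow$\eqref{I:J fixes x}, which is where the hypothesis $p \mid |G|$ is genuinely needed.

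For \eqref{I:cyclic mod p}$\Rightarrow$\eqref{I:J fixes x}, suppose $J$ is cyclic mod~$p$. Then its Sylow $p$-subgroup $P_J$ is normal in $J$, and $P \le P_J$ with $P \neq \{1\}$ since $p \mid |G|$. My strategy is first to prove that $(X,x)$ is an HKG $P_J$-curve (so in particular $P_J$ fixes $x$), and then to conclude by applying Lemma~\ref{L:G is normal in J} with $P_J$ in the role of $G$: since $p \mid |P_J|$ and $P_J$ is normal in $J$, that lemma yields that $J$ fixes $x$. So the crux is to propagate the HKG property from $P$ up to $P_J$.

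To carry this out I would use that every subgroup of a finite $p$-group is subnormal, giving a chain $P = N_0 \triangleleft N_1 \triangleleft \cdots \triangleleft N_r = P_J$ with each $N_i$ normal in $N_{i+1}$. We know $(X,x)$ is an HKG $P$-curve by Proposition~\ref{P:G-curve vs. P-curve}\eqref{I:G-curve}$\Rightarrow$\eqref{I:P-curve}. Proceeding by induction, suppose $(X,x)$ is an HKG $N_i$-curve; since $P \le N_i$ we have $p \mid |N_i|$, and $N_i \triangleleft N_{i+1}$, so applying Lemma~\ref{L:G is normal in J} with $(N_i, N_{i+1})$ in place of $(G,J)$ shows that $N_{i+1}$ fixes $x$, whence Theorem~\ref{T:J fixes x} (again with $(N_i, N_{i+1})$ in place of $(G,J)$) upgrades this to the statement that $(X,x)$ is an HKG $N_{i+1}$-curve. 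Ascending the chain, $(X,x)$ is an HKG $P_J$-curve, completing the reduction above.

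I expect the main obstacle to be precisely this ascent along the subnormal chain: one must check that the HKG property is preserved at each step, and this rests on the fact that a nontrivial $p$-group inside an HKG structure has $x$ as its unique fixed point (the input to Lemma~\ref{L:G is normal in J}), which is what makes normality force fixation of $x$. Everything else is formal bookkeeping, and it is worth flagging that the hypothesis $p \mid |G|$ enters only here, reflecting that the strengthening of Theorem~\ref{T:J fixes x} genuinely fails in the low-genus, tame situations analyzed in Section~\ref{S:low genus}.
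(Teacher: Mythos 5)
Your proposal is correct and follows essentially the same route as the paper: the paper likewise disposes of (i)$\Leftrightarrow$(ii) via Theorem~\ref{T:J fixes x} and (ii)$\Rightarrow$(iii) via Lemma~\ref{L:G is cyclic mod p}, and for (iii)$\Rightarrow$(i) it ascends the same subnormal chain from $P$ to $P_J$, alternating Lemma~\ref{L:G is normal in J} and Theorem~\ref{T:J fixes x} at each step, then appends $J$ at the end exactly as you do with $P_J\lhd J$. No substantive difference.
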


\begin{proof}\hfill

\eqref{I:HKG J-curve}$\Leftrightarrow$\eqref{I:J fixes x}:
This is Theorem~\ref{T:J fixes x}.

\eqref{I:J fixes x}$\Rightarrow$\eqref{I:cyclic mod p}:
This is Lemma~\ref{L:G is cyclic mod p}.

\eqref{I:cyclic mod p}$\Rightarrow$\eqref{I:HKG J-curve}:
By Proposition~\ref{P:G-curve vs. P-curve}\eqref{I:G-curve}$\Rightarrow$\eqref{I:P-curve},
$(X,x)$ is an HKG $P$-curve.
Again choose a chain of subgroups beginning at $P$ and ending at $P_J$,
each normal in the next.
Since $J$ is cyclic mod~$p$, we may append $J$ to the end of this chain.
Applying Lemma~\ref{L:G is normal in J} and Theorem~\ref{T:J fixes x}
to each step of this chain shows that for each group $K$ in this chain,
$K$ fixes $x$ and $(X,x)$ is an HKG $K$-curve.
\end{proof}

\begin{corollary}
\label{cor:Jx}
If $p$ divides $|G|$, then
\begin{enumerate}[\upshape (a)]
\item\label{I:P_x=P_J}
$P_x=P_J$.
\item\label{I:p does not divide index}
The prime $p$ does not divide the index $(J:J_x)$.
\item\label{I:normalizer of P_x}
If $j \in J_x$, then ${}^j P_x = P_x$.
\item\label{I:TI}
If $j \notin J_x$, then ${}^j P_x \intersect P_x = 1$.
\item \label{I:normal p-subgroup}
If $J$ contains a nontrivial normal $p$-subgroup $A$,
then $(X,x)$ is an HKG $J$-curve.
\end{enumerate}
\end{corollary}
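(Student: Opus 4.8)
The plan is to derive all five parts from two facts. The first is that \emph{$P_J$ already fixes $x$}: since $(X,x)$ is an HKG $P$-curve by Proposition~\ref{P:G-curve vs. P-curve}\eqref{I:G-curve}$\Rightarrow$\eqref{I:P-curve}, and $P \le P_J$ with $P_J$ a $p$-group (hence trivially cyclic mod~$p$) and $p \mid |P|$, Theorem~\ref{thm:cycp}\eqref{I:cyclic mod p}$\Rightarrow$\eqref{I:J fixes x} applied with $P$ and $P_J$ in the roles of $G$ and $J$ shows that $P_J$ fixes $x$, i.e.\ $P_J \le J_x$. The second fact is a \emph{unique fixed point property}: because $(X,x)$ is an HKG $P_x$-curve (Corollary~\ref{C:J_x-curve and P_x-curve}) and $P_x$ is its own Sylow $p$-subgroup, Proposition~\ref{P:G-curve vs. P-curve}\eqref{I:G-curve}$\Rightarrow$\eqref{I:strong P-curve} shows the action of $P_x$ on $X-\{x\}$ is unramified, so every nontrivial element of $P_x$ fixes $x$ and no other point.

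With these in hand, parts \eqref{I:P_x=P_J} and \eqref{I:p does not divide index} are immediate. For \eqref{I:P_x=P_J}, the first fact gives $P_J \le J_x$, so $P_J$ is a $p$-subgroup of $J_x$ containing the Sylow $p$-subgroup $P_x$; maximality of $P_x$ forces $P_x = P_J$. For \eqref{I:p does not divide index}, part \eqref{I:P_x=P_J} exhibits a Sylow $p$-subgroup $P_J$ of $J$ lying inside $J_x$, so the full power of $p$ dividing $|J|$ divides $|J_x|$, whence $p \nmid (J:J_x)$. For \eqref{I:normalizer of P_x}, I would invoke Corollary~\ref{C:J_x-curve and P_x-curve} again: $(X,x)$ is an HKG $J_x$-curve, so by Lemma~\ref{L:G is cyclic mod p} the group $J_x$ is cyclic mod~$p$, which by definition means its Sylow $p$-subgroup $P_x$ is normal in $J_x$; thus ${}^j P_x = P_x$ for every $j \in J_x$.

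Part \eqref{I:TI} is where the unique fixed point property does the work: if $j \notin J_x$ and some nontrivial $q \in {}^j P_x \intersect P_x$ existed, then $q \in P_x$ would have unique fixed point $x$, whereas $q \in {}^j P_x$ would have unique fixed point $j(x) \ne x$, a contradiction. Finally, for \eqref{I:normal p-subgroup}, a nontrivial normal $p$-subgroup $A$ lies in every Sylow $p$-subgroup of $J$, in particular $A \le P_J = P_x$ by \eqref{I:P_x=P_J}; by the unique fixed point property the fixed locus of $A$ is exactly $\{x\}$, and since $A$ is normal in $J$, the group $J$ permutes this locus and hence fixes $x$, so $(X,x)$ is an HKG $J$-curve by Theorem~\ref{thm:cycp}\eqref{I:J fixes x}$\Rightarrow$\eqref{I:HKG J-curve}. (This slightly strengthens Lemma~\ref{L:G is normal in J}.) The main obstacle is really only part \eqref{I:P_x=P_J}: once one sees that Theorem~\ref{thm:cycp} can be turned inward on the auxiliary $p$-group $P_J$ to force it to fix $x$, everything else is formal Sylow theory together with the unramifiedness of the $P_x$-action off $x$.
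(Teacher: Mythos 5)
Your proof is correct and follows essentially the same route as the paper: both hinge on applying Theorem~\ref{thm:cycp} to the $p$-group $P_J$ to get $P_J\le J_x$ (hence $P_x=P_J$), and on the fact that nontrivial elements of $P_x$ fix only $x$ because the $P_x$-action on $X-\{x\}$ is unramified. The only cosmetic difference is in part~\eqref{I:normal p-subgroup}, where the paper deduces $J=J_x$ directly from part~\eqref{I:TI} applied to $A\le P_x\intersect{}^jP_x$, while you rerun the fixed-locus argument of Lemma~\ref{L:G is normal in J}; the two are interchangeable.
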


\begin{proof}
\hfill
\begin{enumerate}[\upshape (a)]
\item 
Since $p$ divides $|P_x|$ and $P_J$ is cyclic mod~$p$,
Corollary~\ref{C:J_x-curve and P_x-curve}
and Theorem~\ref{thm:cycp}\eqref{I:cyclic mod p}$\Rightarrow$\eqref{I:J fixes x} 
imply that $P_J$ fixes $x$.
Thus $P_J \le P_x$, so $P_x=P_J$.
\item The exponent of $p$ in each of $|J_x|$, $|P_x|$, $|P_J|$, $|J|$ 
is the same.
\item 
By Lemma~\ref{L:G is cyclic mod p}, $J_x$ is cyclic mod~$p$,
so $P_x$ is normal in $J_x$.
\item 
A nontrivial element of $P_x \intersect {}^j P_x$ 
would be an element of $p$-power order fixing both $x$ and $jx$, 
contradicting the definition of HKG $J_x$-curve.
\item 
The group $A$ is contained in every Sylow $p$-subgroup of $J$;
in particular, $A \le P_J = P_x$.
This contradicts~\eqref{I:TI} unless $J_x=J$.
By Theorem~\ref{thm:cycp}\eqref{I:J fixes x}$\Rightarrow$\eqref{I:HKG J-curve},
$(X,x)$ is an HKG $J$-curve. \qedhere
\end{enumerate}
\end{proof}

\begin{lemma}
\label{la:absub}
Suppose that $g_X>1$.
Let $A \le J$ be an elementary abelian $\ell$-subgroup for some prime $\ell$.
Suppose that $P_x$ normalizes $A$.
Then $A \le J_x$.
\end{lemma}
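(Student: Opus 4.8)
The plan is to prove that $A$ fixes $x$ by splitting off the part of $A$ that moves $x$ and showing this part is trivial. First I would record that $g_X>1$ forces $p\mid|G|$: otherwise $G$ is a $p'$-group, hence cyclic by Lemma~\ref{L:G is cyclic mod p}, and Example~\ref{Ex:p'-curve} gives $X\isom\PP^1_k$, contradicting $g_X>1$. Thus $P_x\ne 1$, and by Corollary~\ref{C:J_x-curve and P_x-curve} the pair $(X,x)$ is an HKG $P_x$-curve; in particular, by Proposition~\ref{P:G-curve vs. P-curve}\eqref{I:strong P-curve}, the action of $P_x$ on $X-\{x\}$ is unramified, so no nontrivial element of $P_x$ fixes any point of $X$ other than $x$. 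The case $\ell=p$ is then immediate: since $P_x$ normalizes $A$, the product $AP_x$ is a $p$-subgroup of $J$ containing $P_x$, which is a Sylow $p$-subgroup of $J$ by Corollary~\ref{cor:Jx}\eqref{I:P_x=P_J}; hence $AP_x=P_x$ and $A\le P_x\le J_x$.

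Now assume $\ell\ne p$. I would set $A_0\colonequals A\cap J_x$, the subgroup of $A$ fixing $x$. Since $P_x$ normalizes both $A$ (by hypothesis) and $J_x$ (as $P_x\le J_x$), it normalizes $A_0$; viewing the elementary abelian group $A$ as an $\F_\ell[P_x]$-module, $A_0$ is a submodule. Because $\ell\ne p$, the algebra $\F_\ell[P_x]$ is semisimple, so there is a $P_x$-stable complement $A_1$ with $A=A_0\oplus A_1$, and then $A_1\cap J_x=A_1\cap A_0=1$. It suffices to prove $A_1=1$, for then $A=A_0\le J_x$.

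The heart of the argument is to show that $(X/A_1,\bar x)$, where $\bar x$ is the image of $x$, is again an HKG $P_x$-curve. Since $P_x$ normalizes $A_1$, it acts on $X/A_1$, faithfully (as $A_1\cap P_x=1$) and fixing $\bar x$. For the genus-$0$ condition, the inclusion $k(X)^{A_1P_x}\subseteq k(X)^{P_x}$ yields a dominant map $X/P_x\isom\PP^1_k\to (X/A_1)/P_x$, so $(X/A_1)/P_x$ has genus $0$. For the ramification condition, suppose some $\bar y\ne\bar x$ were fixed by a nontrivial $g\in P_x$; then $g$ permutes the $A_1$-orbit over $\bar y$, a set of size prime to $p$, so $g$ fixes some point of $X$ over $\bar y$, which must be $x$ since $(X,x)$ is an HKG $P_x$-curve, forcing $\bar y=\bar x$, a contradiction. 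Hence $(X/A_1,\bar x)$ is an HKG $P_x$-curve by Proposition~\ref{P:G-curve vs. P-curve}\eqref{I:strong P-curve}.

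Finally, because $A_1\cap J_x=1$, the cover $X\to X/A_1$ is unramified at $x$, so it induces a $P_x$-equivariant isomorphism $\widehat{\OO}_{X,x}\isom\widehat{\OO}_{X/A_1,\bar x}$; the local differents of $P_x$ therefore agree, and the genus formula of Proposition~\ref{P:G-curve vs. P-curve}\eqref{I:genus of HKG curve} gives $g_{X/A_1}=g_X$. The Hurwitz formula (Remark~\ref{rem:hurwitz}) for $X\to X/A_1$ then reads $2g_X-2=|A_1|(2g_X-2)+D$ with $D\ge 0$; since $2g_X-2>0$, this forces $|A_1|=1$, and so $A=A_0\le J_x$. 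I expect the main obstacle to be the verification that $(X/A_1,\bar x)$ is an HKG $P_x$-curve — specifically the unramifiedness away from $\bar x$ — together with the observation that quotienting by the part $A_1$ acting freely at $x$ preserves the genus, so that the closing Hurwitz inequality can hold only when $A_1$ is trivial.
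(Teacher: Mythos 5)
Your proof is correct and follows essentially the same route as the paper's: the Sylow argument for $\ell=p$, the Maschke decomposition $A=A_0\oplus A_1$ with $A_1\cap J_x=1$ for $\ell\ne p$, the observation that $X/A_1$ is again an HKG $P_x$-curve with the same local different at $x$ (hence the same genus), and the Hurwitz formula to force $A_1=1$. The only difference is that you spell out the verification that $X/A_1$ is an HKG $P_x$-curve (the orbit-size argument for unramifiedness away from $\bar x$), which the paper asserts without detail; this is a welcome addition rather than a divergence.
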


\begin{proof}
It follows from Example~\ref{Ex:p'-curve}  that $p$ divides $|G|$.
If $\ell=p$, then $P_x A$ is a $p$-subgroup of $J$,
but $P_x$ is a Sylow $p$-subgroup of $J$ 
by Corollary~\ref{cor:Jx}\eqref{I:P_x=P_J},
so $A \le P_x \le J_x$.

Now suppose that $\ell \ne p$.
The conjugation action of $P_x$ on $A$
leaves the group $A_x = J_x \intersect A$ invariant.
By Maschke's theorem, $A = A_x \times C$
for some other subgroup $C$ normalized by $P_x$.
Then $C_x=1$.
By Corollary~\ref{C:J_x-curve and P_x-curve}, 
$(X,x)$ is an HKG $P_x$-curve.
Since $P_x$ normalizes $C$, 
the quotient $X/C$ equipped with the image $y$ of $x$
and the induced $P_x$-action is another HKG $P_x$-curve.
Since $C_x=1$, we have $\mathfrak{d}_x(P_x)=\mathfrak{d}_y(P_x)$;
thus Proposition~\ref{P:G-curve vs. P-curve}\eqref{I:G-curve}$\Rightarrow$\eqref{I:genus of HKG curve}
implies that $g_X=g_{X/C}$.
Since $g_X>1$, this implies that $C=1$.
So $A=A_x \le J_x$.
\end{proof}

\subsection{Unmixed actions}

\begin{proof}[Proof of Theorem~\ref{thm:kgthm}]
By the base change property mentioned after Remark~\ref{R:katz},
we may assume that $k$ is algebraically closed.
By Corollary~\ref{C:J_x-curve and P_x-curve}, 
we may enlarge $G$ to assume that $G=J_x$.

First suppose that the action of $G$ 
has a nontrivially and tamely ramified orbit,
say $Gy$, where $y \in X(k)$.
The Hurwitz formula applied to $(X,G)$ gives
\begin{equation}
\label{E:Hurwitz equation for G}
	2g_X-2 = -2|G| + \mathfrak{d}_x(G) + |G/G_y|(|G_y|-1).
\end{equation}
Since the action of $J$ is unmixed, $Jx$ and $Jy$ are disjoint.
The Hurwitz formula for $(X,J)$ therefore gives 
\begin{equation}
\label{E:Hurwitz inequality for J}
	2g_X-2 \ge -2 |J| + |J/G| \mathfrak{d}_x(G) + |J/J_y|(|J_y|-1).
\end{equation}
Calculating $|J/G|$ times the equation~\eqref{E:Hurwitz equation for G} 
minus the inequality~\eqref{E:Hurwitz inequality for J} yields
\[
	(|J/G|-1)(2g_X-2) \le |J/J_y|-|J/G_y| \le 0,
\]
because $G_y \le J_y$. 
Since $g_X>1$, this forces $J=G$.

If a nontrivially and tamely ramified orbit does not exist,
we repeat the proof while omitting the terms involving $y$.
\end{proof}

\subsection{Mixed actions}

Here is an example, mentioned to us by Rachel Pries, that shows that
Theorem~\ref{thm:kgthm}  need not hold if the action of $J$ is mixed.  

\begin{example}
Let $n$ be a power of $p$; assume that $n>2$.
Let $k=\F_{n^6}$.
Let $\calX$ be the curve over $k$ constructed by
Giulietti and Korchm\'aros in~\cite{GiuKor};
it is denoted $\mathcal{C}_3$ in~\cite{Rachel}. 
Let $J=\Aut(\calX)$.
Let $G$ be a Sylow $p$-subgroup of $J$; by \cite[Theorem~7]{GiuKor}, $|G|=n^3$.
Then $\calX$ is an HKG $G$-curve 
by \cite[Lemma~2.5 and proof of Proposition~3.12]{Rachel},
and $g_{\calX}>1$ by~\cite[Thm. 2]{GiuKor}.
Taking $\sigma$ in Definition~\ref{D:mixitup}
to be the automorphism denoted $\tilde{W}$ on~\cite[p. 238]{GiuKor} 
shows that the action of $J$ on $\calX$ is mixed.
In fact, \cite[Theorem~7]{GiuKor} shows that $J$ fixes no $k$-point of $\calX$,
so the conclusion of Theorem~\ref{thm:kgthm} does not hold.
\end{example}

\subsection{Solvable groups}

Here we prove Theorem~\ref{T:solvable}. 
If $p$ does not divide $|G|$, then Example~\ref{Ex:p'-curve}
shows that $X \isom \PP^1_k$, 
so the conclusion of Theorem~\ref{T:solvable} holds. 
For the remainder of this section, we assume that $p$ divides $|G|$.
In this case we prove Theorem~\ref{T:solvable} 
in the stronger form of Theorem~\ref{thm:kgsolv},
which assumes a hypothesis weaker than solvability of $J$.
We retain the notation set at the beginning of 
Section~\ref{S:extra automorphisms},
and let $C$ denote the maximal $p'$-subgroup of $J_x$ that is normal in $J$, 
as in Lemma~\ref{L:maximal p' subgroup}.

\begin{lemma}
\label{L:C not 1}
Suppose that $g_X>1$ and that $(X,x)$ is not an HKG $J$-curve.
If $J$ contains a nontrivial normal abelian subgroup,
then $C \ne 1$.
\end{lemma}

\begin{proof}
The last hypothesis implies that $J$ contains
a nontrivial normal elementary abelian $\ell$-subgroup $A$ 
for some prime $\ell$.
By Corollary~\ref{cor:Jx}\eqref{I:normal p-subgroup}, $\ell \ne p$.
By Lemma~\ref{la:absub}, $A \le J_x$.
Thus $1 \ne A \le C$.
\end{proof}

\begin{theorem}
\label{thm:kgsolv}
Suppose that $p$ divides $|G|$ and $(X,x)$ is not an HKG $J$-curve.
\begin{enumerate}[\upshape (a)]
\item \label{I:solvable g=0}
Suppose that $g_X=0$, so $\Aut(X) \isom \Aut(\PP^1_k) \isom \PGL_2(k)$.
Then $J$ is conjugate in $\PGL_2(k)$ to precisely one of the following groups:
\begin{itemize}
\item
$\PSL_2(\F_q)$ or $\PGL_2(\F_q)$ for some finite subfield $\F_q \leq k$ 
(these groups are the same if $p=2$); 
note that $\PSL_2(\F_q)$ is simple when $q>3$.
\item If $p=2$ and $m$ is an odd integer at least $5$ 
such that a primitive $m$th root of unity $\zeta \in \kbar$ 
satisfies $\zeta + \zeta^{-1} \in k$,
the dihedral group of order $2m$ generated by
$\begin{pmatrix} \zeta & 0 \\ 0 & \zeta^{-1} \end{pmatrix}$ and 
$\begin{pmatrix}0 & 1 \\ 1 & 0 \end{pmatrix}$ 
if $\zeta \in k$,
and generated by 
$\begin{pmatrix} \zeta+ \zeta^{-1}+1 & 1 \\ 1 & 1 \end{pmatrix}$ and 
$\begin{pmatrix}0 & 1 \\ 1 & 0 \end{pmatrix}$
if $\zeta \notin k$.
(The case $m=3$ is listed already, as $\PSL_2(\F_2)$.)
\item
If $p=3$ and $\F_9 \leq k$, 
a particular copy of the alternating group $A_5$ in $\PSL_2(\F_9)$ 
(all such copies are conjugate in $\PGL_2(\mathbb F_9)$); 
the group $A_5$ is simple.
\end{itemize}
Suppose, in addition, that $J$ contains a nontrivial normal abelian subgroup;
then $p	\in \{2,3\}$ and $|P_J|=p$, and if $J$ is conjugate
to $\PSL_2(\F_q)$ or $\PGL_2(\F_q)$, then $q=p$.
\item \label{I:solvable g=1}
Suppose that $g_X=1$.
Then $p$ is $2$ or $3$, 
and the limited possibilities for $X$ and $J_x$ 
are described in Example~\ref{Ex:genus 1}. 
The group $J$ is a semidirect product of $J_x$ 
with a finite abelian subgroup $T \leq X(k)$. 
\item \label{I:solvable g>1}
Suppose that $g_X>1$. Let $C \leq J$ be as in Lemma~\ref{L:maximal p' subgroup}.
Let $Y=X/C$, let $y$ be the image of $x$ under $X \to Y$,
and let $U = \Stab_{J/C}(y)$.
If $J/C$ contains a nontrivial normal abelian subgroup 
(automatic if $J$ is solvable),
then one of the following holds:
\begin{enumerate}[\upshape i.]
\item
$p=3$, $g_X=3$, $g_Y=0$, $C \isom \Z/4\Z$, 
$P_x \isom \Z/3\Z$, $(J:J_x)=4$, 
and $(X,x)$ is isomorphic over $\kbar$ to 
the curve $z^4 = t^3 u - t u^3$ in $\PP^2$ 
equipped with ${(t:u:z)} = {(1:0:0)}$,
which is the curve in Example~\ref{Ex:building on genus 0} with $q=3$.
Moreover, 
\[
	\PSL_2(\F_3) \le J/C \le \PGL_2(\F_3).
\]
\item
$p=2$, $g_X=10$, $g_Y=1$, $C \isom \Z/3\Z$, $P_x \isom Q_8$, 
$(J:J_x)=9$, 
and $(X,x)$ is isomorphic over $\kbar$ to the curve in 
Example~\ref{Ex:building on genus 1, p=2}.  
 The homomorphism $J \to J/C$ sends the subgroups $J_x \supset P_x$ to 
subgroups $J_x/C \supset P_x C/C$ of $U$.
Also, $P_x C/ C \isom P_x \isom Q_8$ and $U \isom \SL_2(\Z/3\Z)$,
and $U$ acts faithfully on the $3$-torsion subgroup $Y[3] \isom (\Z/3\Z)^2$
of the elliptic curve $(Y,y)$.  
The group $J/C$ satisfies
\[
	Y[3] \rtimes Q_8 \isom (\Z/3\Z)^2 \rtimes Q_8 \; \le \; J/C 
	\; \le \; (\Z/3\Z)^2 \rtimes \SL_2(\Z/3\Z) \isom Y[3] \rtimes U.
\]
\item
$p=3$, $g_X=3$, $g_Y=1$, $C \isom \Z/2\Z$, $P_x \isom \Z/3\Z$, $(J:J_x)=4$, 
and $(X,x)$ is isomorphic over $\kbar$ to 
the curve $z^4 = t^3 u - t u^3$ in $\PP^2$ 
equipped with ${(t:u:z)} = {(1:0:0)}$
as in Example~\ref{Ex:building on genus 1, p=3}.
The homomorphism $J \to J/C$
sends the subgroups $J_x \supset P_x$ to 
subgroups $J_x/C \supset P_x C/C$ of $U$.
Also $P_x C/C \isom P_x \isom \Z/3\Z$ and
$U \isom \Z/3\Z \rtimes \Z/4\Z$, 
and $U/Z(U)$ acts faithfully on the group $Y[2] \isom (\Z/2\Z)^2$.
The group $J/C$ satisfies
\[
Y[2]\rtimes \Z/3\Z = (\Z/2\Z)^2 \rtimes \Z / 3\Z \; \le \; J/C 
	\; \le \; (\Z/2\Z)^2 \rtimes (\Z/3\Z \rtimes \Z/4\Z)  = Y[2] \rtimes U.
\]
\end{enumerate}
In each of i.,\ ii.,\ and iii.,\ 
if $(X,x)$ is the curve over $\kbar$ specified, 
from Examples~\ref{Ex:building on genus 0}--\ref{Ex:building on genus 1, p=3},
then any group satisfying the displayed upper and lower bounds for $J/C$
is actually realized as $J/C$ for some subgroup $J \le \Aut(X)$
satisfying all the hypotheses.
\end{enumerate}
\end{theorem}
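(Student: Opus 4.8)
The plan is to treat the three genus ranges separately, reducing the higher-genus case to the low-genus ones by passing to $X/C$. Throughout, recall from Theorem~\ref{T:J fixes x} that ``$(X,x)$ is not an HKG $J$-curve'' is equivalent to ``$J$ does not fix $x$,'' and that the standing assumption $p \mid |G|$ is in force.

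For part~\eqref{I:solvable g=0}, the first observation is that $J$ fixes \emph{no} point of $\PP^1(\kbar)$: the $p$-subgroup $P \le G$ is unipotent, so it has a unique fixed point, which is $x$; a point fixed by all of $J$ would be fixed by $P$, hence equal to $x$, contradicting that $J$ does not fix $x$. I would then invoke Dickson's classification of the finite subgroups of $\PGL_2(\kbar) \isom \Aut(X_{\kbar})$ and retain those whose order is divisible by $p$ and which fix no point. Using the exceptional isomorphisms $\PSL_2(\F_3) \isom A_4$, $\PGL_2(\F_3) \isom S_4$, and $\PSL_2(\F_4) \isom \PSL_2(\F_5) \isom A_5$ to absorb the tetrahedral, octahedral, and most icosahedral groups into the $\PSL_2/\PGL_2$ family, the survivors are exactly $\PSL_2(\F_q)$, $\PGL_2(\F_q)$, the dihedral groups (only for $p=2$, since for odd $p$ a commuting unipotent and semisimple pair cannot generate a cyclic group of order divisible by $p$), and $A_5$ (as a genuinely new group only for $p=3$). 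Descending from $\kbar$ to $k$ produces the stated subfield and $\zeta + \zeta^{-1} \in k$ conditions. For the final clause, a nontrivial normal abelian subgroup rules out the simple groups $\PSL_2(\F_q)$ with $q>3$ and $A_5$, leaving only $q \in \{2,3\}$ (so $q=p$) and the dihedral groups with $p=2$ and $|P_J|=2$.

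For part~\eqref{I:solvable g=1}, Example~\ref{Ex:genus 1} already forces $p \in \{2,3\}$, identifies $X$ as the supersingular $j$-invariant~$0$ elliptic curve, and lists $\calA$, $G$, and $J_x$. Writing $\Aut(X) \isom X(k) \rtimes \calA$, set $T \colonequals J \intersect X(k)$, an abelian normal subgroup. Supersingularity gives $X[p](\kbar)=0$, so $T$ is a $p'$-group; I would then show that $J_x$ is a complement to $T$, i.e.\ $J = T \rtimes J_x$. The projection $J \to \calA$ is injective on $J_x$, and surjectivity of $J_x$ onto the image rests on the facts that $T$ is prime to $p$ and that every nontrivial $(\tau,a) \in J$ has a fixed point (solve $(1-a)P=\tau$, possible since $1-a$ is a surjective isogeny when $a \ne 1$), allowing each non-translation to be conjugated into a point-stabilizer.

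For part~\eqref{I:solvable g>1}, the crux is the reduction to $g_Y \le 1$. Form $Y=X/C$ with $C$ cyclic and $p'$ by Lemma~\ref{L:maximal p' subgroup}, so $X \to Y$ is a tame cyclic cover, totally ramified at $x$ because $C \le J_x$ is part of the inertia there. Since $C$ fixes $x$, the fiber over $y$ is $\{x\}$, whence $J_x$ is the full preimage of $\Stab_{\bar J}(y)$ under $J \twoheadrightarrow \bar J \colonequals J/C$; in particular $U = J_x/C$ and, by the quotient property of HKG curves, $(Y,y)$ is an HKG $U$-curve with $p \mid |U|$ (as $P_xC/C \ne 1$). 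Maximality of $C$ forces $\bar J$ to have \emph{no} nontrivial normal $p'$-subgroup inside $U$: the preimage in $J$ of such a subgroup would be a normal $p'$-subgroup of $J$ contained in $J_x$, hence inside $C$. Now if $g_Y>1$, then since $\bar J$ has a nontrivial normal abelian subgroup by hypothesis and $(Y,y)$ is not an HKG $\bar J$-curve (otherwise $\bar J$ fixes $y$, forcing $J=J_x$ and contradicting the hypothesis on $(X,x)$), Lemma~\ref{L:C not 1} applied to $(Y,y,U,\bar J)$ would produce such a subgroup---a contradiction. Hence $g_Y \le 1$. I would then apply part~\eqref{I:solvable g=0} or~\eqref{I:solvable g=1} to $(Y,y,U,\bar J)$ to pin down $\bar J$, recognize the triple $(Y,\bar J,S')$ as one of Examples~\ref{Ex:building on genus 0}--\ref{Ex:building on genus 1, p=3}, and reconstruct $X$ as the degree-$|C|$ cyclic cover there, with $\bar J$ lifting to $J$ by Proposition~\ref{prop:constructautos}\eqref{I:automorphisms lift}. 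Riemann--Hurwitz for the tame cover $X \to Y$ then computes $g_X$; imposing $g_X>1$ selects exactly the three tuples. For example, with $g_Y=0$ and $p=2$ the smallest admissible cover ($z^3=t^2-t$ over $\F_2$) already has genus~$1$, so $p=2$ survives only through the genus-$1$ base of case~ii, while $g_Y=0$, $p=3$, $|C|=4$ yields $z^4=t^3-t$ of genus~$3$ (case~i), and the two genus-$1$ bases give cases~ii and~iii. Finally, the realizability assertion follows from Proposition~\ref{prop:constructautos}: every automorphism of the specified $Y$ preserving $S'$ lifts to $X$, so each group between the displayed bounds occurs as $J/C$.

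The main obstacle is the exhaustive bookkeeping in part~\eqref{I:solvable g>1}: after the clean reduction to $g_Y \le 1$, one must verify that the combined constraints---Riemann--Hurwitz with $g_X>1$, the Hasse--Arf restrictions on the $p$-part recorded in Section~\ref{S:breaks}, and the maximality of $C$---leave precisely the three tabulated cases and eliminate the spurious genus-$0$ branches (notably the $p=2$ dihedral one). The Dickson classification together with descent to $k$ in part~\eqref{I:solvable g=0} is the other substantial, if classical, ingredient.
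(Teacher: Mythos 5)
Your overall architecture coincides with the paper's: the genus trichotomy, Dickson's classification for $g_X=0$, the reduction of $g_X>1$ to $g_Y\le 1$ via Lemma~\ref{L:C not 1} and the maximality of $C$, and Proposition~\ref{prop:constructautos} for realizability. However, two steps as you have written them do not go through. In part~\eqref{I:solvable g=1}, your argument for $J=T\rtimes J_x$ fails: the observation that each non-translation $(\tau,a)$ has a fixed point $P$ with $(1-a)P=\tau$ only conjugates that \emph{single element} into a point stabilizer, and the conjugating translation depends on the element and need not lie in $J$. This gives no complement to $T$ inside $J$, and a priori the extension $0\to T\to J\to \psi(J)\to 1$ could be non-split. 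The paper proves splitting by showing $H^i(\psi(J),T)=0$ for $i\ge 1$ via Lyndon--Hochschild--Serre, using that $T^{\psi(P_J)}=0$ because $P_J$ fixes only $x$; a shorter repair along your lines is a Frattini argument: $T$ is a normal $p'$-subgroup, so $J=T\cdot N_J(P_J)$, and $N_J(P_J)\le J_x$ since $x$ is the unique fixed point of $P_J$, whence $J=T\rtimes J_x$. Either way, some such argument must be supplied.

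In part~\eqref{I:solvable g>1}, what you defer as ``bookkeeping'' is in fact the technical core. After $g_Y\le 1$, nothing you have said determines $n=|C|$, identifies the branch locus of $X\to Y$ as the $J$-orbit $S'=Jy$ with $|S'|=|P_x|+1=(J:J_x)$, or (in the genus-$1$ cases) forces $S'=Y[m]$. The paper gets these from a Kummer-theoretic analysis: writing $k(X)=k(Y)(z)$ with $z^n=f$ and $\operatorname{div}(f)=S'-|S'|y$, the eigenvector computation for ${}^j z/z$ yields the divisor identity $|S'|(y-{}^j y)=(r-1)(S'-|S'|y)+n\operatorname{div}(g)$, which gives $n\mid |S'|$, the principality of $(|S'|/n)(y-{}^jy)$, and $n\ne|S'|$ when $g_Y>0$. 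Without this you cannot exclude, e.g., $C\isom\Z/2\Z$ in case~i (which would give $g_X=1$), nor pin down the equations of the three curves. Relatedly, in part~\eqref{I:solvable g=0} the descent from $\kbar$-conjugacy to $k$-conjugacy is asserted but not carried out; the paper needs the trace-field argument (Feit) for infinite $k$ and the determinant sequence $1\to\PSL_2(k)\to\PGL_2(k)\to k^\times/k^{\times 2}\to 1$ to handle $J\nleq\PSL_2(k)$, and this is precisely where the conditions $\F_q\le k$, $\zeta+\zeta^{-1}\in k$, and the two dihedral normal forms come from.
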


\begin{proof} \hfill

(a)
The groups listed in the statement of~\eqref{I:solvable g=0} 
are pairwise non-isomorphic, hence not conjugate.
Thus it remains to prove that $J$ is conjugate to one of them.
By Corollary~\ref{cor:Jx}\eqref{I:normal p-subgroup},
$J$ has no normal Sylow $p$-subgroup.
We will show that \emph{every} finite subgroup $J \le \PGL_2(k)$
with no normal Sylow $p$-subgroup 
is conjugate to a group listed in~\eqref{I:solvable g=0}.
This would follow immediately from \cite[Theorem~B]{faber},
but \cite{faber} has not yet been published,
so we now give a proof not relying on it.
We will use the exact sequence
\[
   1 \to \PSL_2(k) \to \PGL_2(k) \stackrel{\det}\to k^\times/k^{\times 2} \to 1.
\]

\emph{Case 1: $k$ is finite and $J \le \PSL_2(k)$.}
For finite $k$, the subgroups of $\PSL_2(k)$ up to conjugacy 
were calculated by Dickson \cite[\S260]{dickson}; 
see also \cite[Ch.2 \S8]{huppert}, \cite[Ch.3 \S6]{suzuki}. 
The ones with no normal Sylow $p$-subgroup 
are among those listed in~\eqref{I:solvable g=0}.
(Dickson sometimes lists \emph{two} $\PSL_2(k)$-conjugacy classes 
of subgroups of certain types, but his proof shows that
they map to a single $\PGL_2(k)$-conjugacy class.)

\emph{Case 2: $k$ is infinite and $J \le \PSL_2(k)$.}
Let $\widetilde{J}$ be the inverse image of $J$
under the finite extension $\SL_2(k) \surjects \PSL_2(k)$.
So $\widetilde{J}$ is finite.
The representation of $\widetilde{J}$ on $k^2$ is absolutely irreducible,
since otherwise $\widetilde{J}$ would inject into the group
$\begin{pmatrix} * & * \\ 0 & * \end{pmatrix}$
of $2 \times 2$ upper triangular invertible matrices over $\kbar$,
and $\widetilde{J}$ would have a normal Sylow $p$-subgroup
$\widetilde{J} \intersect 
\begin{pmatrix} 1 & * \\ 0 & 1 \end{pmatrix}$,
and $J$ would have one too, contrary to assumption. 
By \cite[Theorem 19.3]{feit}, 
this representation is definable over the field $k_0$ 
generated by the traces of the elements of $\widetilde{J}$. 
Each trace is a sum of roots of unity, so $k_0$ is finite.
Thus $J$ is conjugate in $\PGL_2(k)$ to a subgroup $J_0 \le \PGL_2(k_0)$. 
Conjugation does not change the determinant, 
so $J_0 \le \PSL_2(k_0)$.
By Case~1, $J_0$ is conjugate to a group in our list, so $J$ is too.

\emph{Case 3: $k$ is finite or infinite, and $J \le \PGL_2(k)$, but $J \nleq \PSL_2(k)$.}
If $p=2$, then, since $k$ is perfect, $k^\times = k^{\times 2}$,
so $\PGL_2(k)=\PSL_2(k)$.
Thus $p>2$.
Let $J' \colonequals J \intersect \PSL_2(k)$.
Then $J/J'$ injects into $k^\times/k^{\times 2}$, so $p \nmid (J:J')$.
The Sylow $p$-subgroups of $J'$ are the same as those of $J$,
so $J'$ has exactly one if and only if $J$ has exactly one;
i.e., $J'$ has a normal Sylow $p$-subgroup if and only if $J$ has one.
Since $J$ does not have one, neither does $J'$.
By Case~1, we may assume that $J'$ appears in our list.

The group $J$ is contained in the normalizer $N_{\PGL_2(k)}(J')$.
We now break into cases according to $J'$.
If $J'$ is $\PSL_2(\F_q)$ or $\PGL_2(\F_q)$ 
for some subfield $\F_q \le k$,
then $N_{\PGL_2(k)}(J') = \PGL_2(\F_q)$ 
by~\cite[\S255]{dickson} (the proof there works even if $k$ is infinite),
so $J=\PGL_2(\F_q)$, which is in our list.
Recall that $p>2$, so $J'$ is not dihedral.
Thus the only remaining possibility is that 
$J' \isom A_5 \le \PSL_2(\F_9) \le \PGL_2(k)$.
Let $\{ 1 , a \}$ be a subgroup of order $2$ 
in the image of $J$ in $k^\times/k^{\times 2}$ 
and let $J''$ be its inverse image in $J$. 
Then $J'' < \PSL_2(k(\sqrt{a}))$, 
so $J''$ should appear in our list, 
but $|J''|=120$ and there is no group of order $120$ there for $p=3$.

(b)
In the notation of Example~\ref{Ex:genus 1},
let $\psi \colon J \to A$ be the projection.
Let $T \colonequals \ker \psi \le X(k)$.
Since $X$ is supersingular, $T$ is a $p'$-group.
Let $\Jbar \colonequals \psi(J) \le A$.
Since $G \le \Jbar \le \calA$, 
the group $\Jbar$ is in the list of possibilities in Example~\ref{Ex:genus 1}
for $G$ given $p$.
Checking each case shows that its Sylow $p$-subgroup 
$\Pbar_J \colonequals \psi(P_J)$ is normal in $\Jbar$.
The action of $\Aut(X)$ on $X(k)$ restricts to the conjugation action
of $J$ on the abelian group $T$, which factors through $\Jbar$,
so $H^0(\Pbar_J,T) = T^{\Pbar_J} = T^{P_J} = 0$,
since $P_J$ has a unique fixed point on $X$.
Also, $H^i(\Pbar_J,T)=0$ for all $i \ge 1$,
since $|\Pbar_J|$ and $|T|$ are coprime.
Thus, by the Lyndon--Hochschild--Serre spectral sequence
applied to $\Pbar_J \lhd \Jbar$, 
we have $H^i(\Jbar,T)=0$ for all $i \ge 1$. 
Therefore the short exact sequence 
$0 \to T \to J \to \Jbar \to 1$ is split, and all splittings are conjugate.  
Let $K$ be the image of a splitting $\Jbar \to J$. 
Then $K$ contains a Sylow $p$-subgroup of $J$.  
Equivalently, some conjugate $K'$ of $K$ contains $P_J$. 
 Since $K' \isom \Jbar$ and $\Pbar_J$ is normal in $\Jbar$, 
the group $P_J$ is normal in $K'$.  
Since $x$ is the unique fixed point of $P_J$, 
this implies that $K'$ fixes $x$; i.e., $K' \le J_x$. 
On the other hand, $|K'| = |\Jbar| \ge |J_x|$ since $J_x \intersect T = \{e\}$.
Hence $K' = J_x$ and  $J =  T \rtimes J_x$.

(c) 
We may assume that $k$ is algebraically closed.
By Theorem~\ref{T:J fixes x}, $(X,x)$ is an HKG $J_x$-curve.
Then $(Y,y)$ is an HKG $J_x/C$-curve, but not an HKG $J/C$-curve 
since $J/C$ does not fix $y$.
If $g_Y>1$, then Lemma~\ref{L:C not 1} applied to $Y$
yields a nontrivial $p'$-subgroup $C_1 \le J_x/C$ that is normal in $J/C$,
and the inverse image of $C_1$ in $J$ is a $p'$-subgroup $C_2 \le J_x$ 
normal in $J$ with $C_2 \gneq C$,
contradicting the maximality of $C$.
Thus $g_Y \le 1$.
Since $g_X>1$, we have $C \ne 1$.
Let $n=|C|$.
Let $\zeta$ be a primitive $n$th root of unity in $k$.
Let $c$ be a generator of $C$.

By Lemma~\ref{L:maximal p' subgroup}, $C$ is central in $J_x$,
so $P_x C$ is a direct product.
By Corollary~\ref{C:J_x-curve and P_x-curve}, $X$ is an HKG $P_x$-curve.
Thus $X/P_x \isom \PP^1$, 
and the $P_x$-action on $X$ is totally ramified at $x$
and unramified elsewhere.
The action of $C$ on $X/P_x$ fixes the image of $x$,
so by Example~\ref{Ex:p'-curve},
the curves in the covering $X/P_x \to X/P_x C$
have function fields $k(z) \supseteq k(f)$, where $z^n=f$ 
and ${}^c \! z=\zeta z$. 
Powers of $z$ form a $k(X/P_xC)$-basis of eigenvectors 
for the action of $c$ on $k(X/P_x)$.

We may assume that the (totally ramified) image of $x$ in $X/P_x$ 
is the point $z=\infty$. We obtain a diagram of curves
\[
\xymatrix{
& X \ar[ld]^-C \ar[rd]_-{P_x}^-{\lefteqn{\text{\scriptsize totally ramified above $z=\infty$, unramified elsewhere}}} \\
Y \isom X/C \ar[rd]^-{P_x} && X/P_x \isom \PP^1_z \ar[ld]_-C^-{\lefteqn{\text{\scriptsize totally ramified above $f=\infty$, $f=0$}}} \\
& X/P_x C \isom \PP^1_f
}
\]
where the subscript on each $\PP^1$ indicates the generator of 
its function field, and the group labeling each morphism 
is the Galois group. 
The field $k(X)$ is the compositum of its subfields $k(Y)$ and $k(X/P_x)$. 

Let $S$ be the preimage of the point $f=0$ under $Y \to X/P_x C$,
and let $S' \colonequals S \union \{y\}$.
Comparing the $p$-power and prime-to-$p$ ramification on both sides
of the diagram shows that the point $f=\infty$ totally ramifies
in $X \to Y \to X/P_x C$,
while the point $f=0$ splits completely 
into a set $S$ of $|P_x|$ points of $Y$, each of which is totally ramified
in $X \to Y$.
Thus the extension $k(X) \supseteq k(Y)$ is Kummer and generated by 
the same $z$ as above, 
and powers of $z$ form a $k(Y)$-basis of eigenvectors 
for the action of $c$ on $k(X)$.
This extension is totally ramified above $S'$ and unramified elsewhere.
The divisor of $f$ on $Y$ is 
$S - |S| y = S' - |S'| y$,
where $S$ here denotes the divisor $\sum_{s \in S} s$,
and so on.

Let $j \in J$.
Since $C \lhd J$, the element $j$ acts on $Y$ and preserves 
the branch locus $S'$ of $X \to Y$.
Since $X \to Y$ is totally ramified above $S'$,
the automorphism $j$ fixes $x$ if and only if it fixes $y$.
Since $P_x$ acts transitively on $S$, and $J$ does not fix $x$ or $y$,
the set $S'$ is the $J$-orbit of $y$.
Thus 
\[
	(J:J_x) = |Jx| = |Jy| = |S'| = |P_x|+1.
\]
Suppose that $j \in J-J_x$, so ${}^j y \ne y$.
Then the divisor of ${}^j \! f/f$ on $Y$ is 
\[
	\left(S' - |S'| \, {}^j y \right)
	- \left(S' - |S'| y \right)
	= |S'|(y-{}^j y),
\]
which is nonzero. 
Since $C$ is cyclic and normal, 
$j^{-1}cj = c^r$ for some $r$, 
and hence ${}^c  ({}^j \! z/z)= {}^{jc^r} \!\! z / {}^c z = \zeta^{r-1} \left( {}^j \! z/z \right)$. 
Thus ${}^j \! z/z$ is a $\zeta^{r-1}$-eigenvector, 
so ${}^j \! z/z = z^{r-1} g$ for some $g \in k(Y)^\times$.
Taking $n$th powers yields ${}^j \! f/f = f^{r-1} g^n$.
The corresponding equation on divisors is 
\begin{equation}
\label{E:divisors}
	|S'|(y-{}^j \! y)= (r-1)(S' - |S'|y) +n \operatorname{div}(g).
\end{equation}
Considering the coefficient of a point of $S' - \{y,{}^j y\}$
shows that $r-1 \equiv 0 \pmod{n}$.
Then, considering the coefficient of $y$ shows that $n$ divides $|S'|$,
and dividing equation~\eqref{E:divisors} through by $n$ 
shows that $(|S'|/n)(y-{}^j y)$ is $\operatorname{div}(f^{(r-1)/n} g)$, 
a principal divisor.
If, moreover, $g_{Y}>0$, then 
a difference of points on $Y$ cannot be a principal divisor,
so $n \ne |S'|$.

\emph{Case 1: $g_{Y}=0$.}
Applying~\eqref{I:solvable g=0} to $Y$ shows that 
$p \in \{2,3\}$ and any Sylow $p$-subgroup of $J/C$ has order~$p$.
Since $C$ is a $p'$-group, $|P_J|=p$ too.
By Corollary~\ref{cor:Jx}\eqref{I:P_x=P_J}, $P_x=P_J$, 
so $|P_x|=p$, and $n$ divides $|S'|=p+1$.
Thus $(p,n)$ is $(2,3)$, $(3,2)$, or $(3,4)$.
The Hurwitz formula for $X \to Y$ yields
\[
	2g_X-2 = n(2 \cdot 0 - 2) + \sum_{s \in S'} (n-1) = -2n + (p+1)(n-1).
\]
Only the case $(p,n) = (3,4)$ yields $g_X>1$.
By~\eqref{I:solvable g=0},
we may choose an isomorphism $Y \isom \PP^1_t$
mapping $y$ to $\infty$
such that the $J/C$-action on $Y$ 
becomes the standard action of $\PSL_2(\F_3)$ or $\PGL_2(\F_3)$ on $\PP^1_t$.
Then $S' = Jy = \PP^1(\F_3)$.
Then $f$ has divisor $S' - 4y = \Aff^1(\F_3) - 3 \cdot \infty$ on $\PP^1$,
so $f = t^3-t$ up to an irrelevant scalar.
Since $k(X)=k(Y)(\sqrt[n]{f})$,
the curve $X$ has affine equation $z^4 = t^3-t$.
This is the same as the $q=3$ case of 
Example~\ref{Ex:building on genus 0}.

\emph{Case 2: $g_{Y}=1$.}
Applying~\eqref{I:solvable g=1} (i.e., Example~\ref{Ex:genus 1}) to $Y$
shows that either $p$ is $2$ and $|P_x|$ divides $8$, 
or $p=3$ and $|P_x|=3$; also, $Y$ has $j$-invariant~$0$.
Also, $n$ divides $|S'| =|P_x|+1$, but $n$ is not $1$ or $|S'|$.
Thus $(p,n,|P_x|,|S'|)$ is $(2,3,8,9)$ or $(3,2,3,4)$.
The Hurwitz formula as before gives $g_X=10$ or $g_X=3$, respectively.
Let $m=|S'|/n$.
Since $m(y-{}^j y)$ is principal for all $j \in J$,
if $y$ is chosen as the identity of the elliptic curve,
then the $J$-orbit $S'$ of $y$ is contained in the 
group $Y[m]$ of $m$-torsion points.
But in both cases, these sets have the same size $|S'| = m^2$.
Thus $S' = Y[m]$.

If $p=2$, the $j$-invariant $0$ curve $Y$ has equation $u^2+u=t^3$,
and $Y[3]-\{y\}$ is the set of points with $t \in \F_4$,
so $f=t^4+t$ up to an irrelevant scalar,
and $k(X)=k(Y)(\sqrt[3]{t^4+t})$.
Thus $X$ is the curve of Example~\ref{Ex:building on genus 1, p=2}.

If $p=3$, the $j$-invariant $0$ curve $Y$ has equation $u^2=t^3-t$,
and $Y[2]-\{y\}$ is the set of points with $u=0$,
so $f=u$ up to an irrelevant scalar,
and $k(X) = k(Y)(\sqrt{u}) = k(t)(\sqrt[4]{t^3-t})$.
Thus $X$ is the curve of Example~\ref{Ex:building on genus 1, p=3}.

Finally, Proposition~\ref{prop:constructautos} implies
that in each of i.,\ ii.,\ and iii.,\ 
any group satisfying the displayed upper and lower bounds,
viewed as a subgroup of $\Aut(Y)$, 
can be lifted to a suitable group $J$ of $\Aut(X)$.
\qedhere
\end{proof}

\begin{remark}
Suppose that $(X,x)$ is not an HKG $J$-curve, 
$g_X > 1$, 
and $P_J$ is not cyclic or generalized quaternion.
Then \cite[Theorem~3.16]{Rachel} shows that $J/C$ is almost simple 
with socle from a certain list of finite simple groups.
\end{remark}

\section*{Acknowledgement}

We thank the referee for suggestions on the exposition.

\end{document}